\documentclass[10pt]{article}

\usepackage{dcolumn}
 \newcolumntype{d}[1]{D{:=}{\cdot}{#1} }

\usepackage[colorlinks=true, pdfstartview=FitV, linkcolor=blue,
  citecolor=blue, urlcolor=blue]{hyperref}

\usepackage{amssymb,amsmath, amscd,amsthm,comment}
\usepackage{graphicx}
\usepackage[all]{xy}
\usepackage{xurl}

\newtheorem{theoremx}{Theorem}
\newtheorem{theorem}{Theorem}[section]

\newtheorem{lemma}[theorem]{Lemma}

\newtheorem{proposition}[theorem]{Proposition}
\newenvironment{remark}{\medskip\refstepcounter{theorem}\noindent\textbf{Remark \thetheorem.}}{\medskip}

\def\A{{\mathbb A}}

\DeclareMathOperator{\ord}{ord}

\DeclareMathOperator{\SO}{SO}
\DeclareMathOperator{\PSO}{PSO}

\DeclareMathOperator{\SL}{SL}

\DeclareMathOperator{\GL}{GL}
\DeclareMathOperator{\PGL}{PGL}

\DeclareMathOperator{\Spec}{Spec}
\DeclareMathOperator{\Sel}{Sel}

\DeclareMathOperator{\Pic}{Pic}

\DeclareMathOperator{\Sym}{Sym}
\DeclareMathOperator{\Span}{Span}
\DeclareMathOperator{\Stab}{Stab}

\def\dim{{\rm dim}}

\def\gen{{\rm gen}}

\def\inv{{\rm inv}}

\def\Vol{{\rm Vol}}
\def\R{{\mathbb R}}
\def\F{{\mathbb F}}

\def\FF{{\mathcal F}}

\def\Q{{\mathbb Q}}

\def\J{{\mathcal J}}
\def\C{{\mathcal C}}

\def\Z{{\mathbb Z}}
\def\P{{\mathbb P}}
\def\F{{\mathbb F}}
\def\FF{{\mathcal F}}
\def\Q{{\mathbb Q}}
\def\C{{\mathbb C}}
\def\A{{\mathbb A}}

\def\L{{\mathcal L}}

\def\CC{{\mathcal C}}

\def\max{{\rm max}}

\def\nicerep{{Representation}}
\def\localCondition{{Local Weights}}

\def\cO{{\mathcal O}}
\def\cl{{\rm cl}}
\def\fP{{\mathfrak p}}
\def\fQ{{\mathfrak q}}
\def\sc{{\mathcal C}}
\def\bG{{\mathbb G}}

\DeclareMathOperator{\proj}{proj}
\DeclareMathOperator{\diag}{diag}

\title{Geometry-of-numbers methods over global fields II: \\Coregular representations}
\author{Manjul Bhargava, Arul Shankar, and Xiaoheng Wang}
\date{}

\begin{document}
\maketitle
\begin{abstract}We develop geometry-of-numbers methods to count orbits
  in coregular vector spaces having bounded invariants over any
  global field.   We apply these techniques to bound the average ranks and determine average Selmer group sizes of elliptic curves and Jacobians of hyperelliptic curves over any base global field $F$ of characteristic not $2$, $3$, or $5$.
\end{abstract}

\section{Introduction}\label{sec:intro}

In \cite{BSWfield}, the first of this series of two articles, we extended certain counting methods from the geometry of numbers so that
they may be applied over an arbitrary global number or function field for the purpose of counting integral
orbits in a prehomogeneous vector space. As applications, we obtained natural generalizations (from $\Q$ to general global fields) of results on the density of discriminants of number field extensions of small
degree \cite{DH,dodqf,dodpf}, as well as on cases of the Cohen--Lenstra--Martinet
heuristics for class groups \cite{BV2, BV}.

In recent years, there have also been many works counting orbits over $\Q$ in coregular spaces, with applications to the arithmetic statistics of elliptic curves, hyperelliptic curves, squarefree sieves, and more; see, for example, \cite{B}, \cite{BG1}-\cite{bsddensity}, \cite{Laga}, \cite{LR}, \cite{PS2}, \cite{RT}, \cite{ANS}, \cite{Th}, among many others.
In this article, we develop methods for counting integral orbits of coregular representations over general global fields, thus, in particular, generalizing results from \cite{BS2, BS3, BS4, BS5, B, BG1, SW, BGW}.

Let $F$ denote a fixed global field. That is, $F$ is either a number field or the field of rational functions of a smooth projective and geometrically connected algebraic curve $\sc$ over $\F_q$. When $F$ is a number field, let $\cO$ denote the ring of integers of $F$ and let $M_\infty$ denote the set of archimedean places of $F$. When $F$ is the field of rational functions of a smooth projective and geometrically connected algebraic curve $\sc$ over $\F_q$, we fix some nonempty finite set $D$ of closed points on $\CC$. Let $\cO$ denote the ring of regular functions on $\sc-D$ and let $M_\infty$ denote the finite set of places corresponding to $D$.

When the characteristic of $F$ is not $2$ or $3$, an elliptic curve $E/F$ may be expressed in the form $$E = E_{A,B}:y^2 = x^3 + Ax + B,$$ with $A,B\in F$. Two such elliptic curves $E_{A,B}$ and $E_{A',B'}$ over $F$ are equivalent if and only if there is some $\alpha\in F$ such that $A'=\alpha^4A$, $B'=\alpha^6B$. Hence it is natural to view $(A,B)$ as an element of the weighted projective space $\P(4,6).$ We define the height of $E_{A,B}$, and the height of $(A,B)$, by the usual height on weighted projective space as follows: let $I$ be the fractional ideal
$$I = \{a\in F\colon a^4A\in \cO,a^6B\in\cO\};$$
then
\begin{equation}
H(A,B) := (NI)\prod_{\fP\in M_\infty} \max(|A|_\fP^{1/4},|B|_\fP^{1/6}).
\end{equation}
The product formula implies that this height is well-defined. There is a Schanuel-type count of the number of elements in weighted projective space with bounded height (\cite{AWD}). We note that this showcases one major difference between having multiple invariants (e.g., coefficients of the Weierstrass equation) and having a unique invariant (e.g. the discriminant of a field extension).

We prove the following bound on the average rank of elliptic curves over a global field $F$.

\begin{theoremx}\label{thm:ellrank}
Let $F$ be any global field of characteristic not $2$, $3$ or $5$.  When all elliptic curves $E$ over $F$ are ordered by height, their average rank is at most $1.05$.
\end{theoremx}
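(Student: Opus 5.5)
The bound $1.05$ is what one gets over $\Q$ from the average size of the $5$-Selmer group (Bhargava--Shankar), so I plan to follow that route. The intended key input is the analogue over $F$ of the $5$-Selmer result: when elliptic curves $E_{A,B}/F$ are ordered by $H(A,B)$, the average size of $\Sel_5(E)$ is $\sigma(5)=6$. This is where the hypothesis $\mathrm{char}\,F\ne 5$ enters, since $E[5]$ must be étale. Granting it, the rank bound is formal. Write $r=\mathrm{rk}\,E(F)$ and $s=\dim_{\F_5}\Sel_5(E)$. Then $s\ge r$, and the root number / $p$-parity results (Dokchitser--Dokchitser for number fields, and the analogous results over function fields, where $p$-parity is known) imply that the parity of $s$ is equidistributed in families with a positive proportion of local conditions. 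Since $5^s\ge 20s-15$ holds for all integers $s\ge 0$, with equality at $s=1,2$, averaging gives $\mathrm{avg}(20s-15)\le 6$, that is, $\mathrm{avg}(s)\le 21/20=1.05$, and hence $\mathrm{avg}(r)\le 1.05$. In fact the linear bound alone already gives $1.05$, so the parity input is not needed for this constant.

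The main work is the Selmer average, which I would carry out in the framework of this paper. First, parametrize $5$-Selmer elements by $\mathrm{SL}_5\times \mathrm{SL}_5$ (modulo the appropriate centre) acting on $V=\wedge^2(F^5)\otimes F^5$, the space of quintuples of $5\times5$ alternating matrices. This representation is coregular, with invariant ring freely generated by two invariants of degrees $20$ and $30$, which are identified with $A$ and $B$. Locally soluble orbits with invariants $(A,B)$ correspond to nonzero elements of $\Sel_5(E_{A,B})$, and one needs integral representatives: for $(A,B)$ integral away from a bounded set of primes, every locally soluble class has an $\cO$-integral representative. This should be imported from the $\Q$ case via the Fisher / Bhargava--Shankar constructions, which are local and work over any Dedekind domain in which $30$ is invertible at the relevant primes.

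Second, count $G(\cO)$-orbits on $V(\cO)$ with bounded height using the general geometry-of-numbers machinery developed here for coregular spaces over global fields. This means fundamental domains for $G(\cO)\backslash G(F_\infty)$ (or the adelic analogue in the function field case, with $M_\infty$ corresponding to $D$), averaging over compact regions, and cutting off the cusps. The count should equal $\tau(G)=1$ times the volume of the region of invariants of height less than $X$, up to the Jacobian constant, and this is then compared with the Schanuel-type count of $(A,B)\in\P(4,6)$ from \cite{AWD}.

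Third, sieve to locally soluble elements using a uniformity estimate, namely a tail bound for elements whose discriminant is divisible by $\fP^2$ for large $\fP$. This is then combined with the product of local masses, which equals $\#E(F_v)/5E(F_v)$ divided by $|5|_v^{-1}$ and multiplies out through the product formula to give the extra $\sigma(5)-1=5$. The main obstacle is the cusp analysis for the large space $V$ (of dimension $50$) over a general global field, which requires showing that reducible points dominate the cusp and contribute negligibly, together with the uniformity estimate in positive characteristic. For these I would follow the Bhargava--Shankar $5$-Selmer arguments, replacing archimedean boxes by adelic ones and using the reduction theory for $G$ over $F$ established earlier in the series.
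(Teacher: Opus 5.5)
Your deduction of the rank bound is exactly the paper's. Theorem~\ref{ecthm} with $n=5$ gives an average of $6$ for $|\Sel_5(E)|$, and $|\Sel_5(E)|\ge 5^r\ge 20r-15$ then yields $21/20$; no parity input is needed. The problems are in your sketch of that key input.

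First, you claim that every locally soluble orbit with integral invariants has an $\cO$-integral representative, obtained by gluing local minimisation. Minimisation only gives, for each $\fP$, some $g_\fP\in G_5(F_\fP)$ with $g_\fP v\in V(\cO_\fP)$. The adele $(g_\fP)_\fP$ then lies in some class $\beta\in\cl(G_5)$. Here $\cl(G_5)\cong \mathrm{Cl}(\cO)/\mathrm{Cl}(\cO)^5$, which is nontrivial whenever $5$ divides the class number. So an orbit is only guaranteed to meet one of the lattices $V_\beta$, possibly several of them, and need not meet $V(\cO)$ at all. This is precisely the new phenomenon over a general $F$. The paper handles it by counting $G_\beta$-orbits on every $V_\beta$, weighted by the function $m$ of \eqref{eq:defnofm}, which corrects for orbits meeting several $V_\beta$ and is a product of local weights by Lemma~\ref{lem:mcongruence}. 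Counting $G_5(\cO)$-orbits on $V(\cO)$ alone undercounts: it captures only the $\beta=1$ part of the adelic volume.

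Second, your mass bookkeeping is inverted. The group $G_5\cong(\SL_5\times\SL_5)/\mu_5$ has Tamagawa number $5$, not $1$. The local masses are $c_v=\#(E(F_v)/5E(F_v))/\#E[5](F_v)=|5|_v^{-1}$, and their product over all places is $1$ by the product formula, not $5$. Read literally, your factor $\#(E(F_v)/5E(F_v))\cdot|5|_v$ equals $\#E[5](F_v)$, which has no constant product at all. With $\tau=1$ and local masses multiplying to $1$, your computation would give an average $\Sel_5$-size of $2$ rather than $6$. The $5$ nonidentity elements on average come from $\tau(G_5)=5$. That factor appears only once the contributions $\tau_\infty(\FF_\beta)$ from all $\beta\in\cl(G_5)$ are summed, as in \eqref{thm:countingaftervolume}, which is another reason the first point matters.
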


\noindent From the works of Goldfeld \cite{Go} and Katz-Sarnak \cite{KS}, it is predicted that this average rank is $1/2$. When $F=\Q$, an improved version of Theorem \ref{thm:ellrank} was proved in \cite{BS5}, with an upper bound of 0.885. The boundedness of the average rank of elliptic curves over $F$, when $F=\F_q(t)$, was proved by de Jong \cite{dJ}, who demonstrated a bound of $4/3+o_q(1)$. Independently, a rank bound of $3/2+o_q(1)$ was proved by Ho--Le Hung--Ngo \cite{Ngo} over function fields over $\F_q$. The average rank of elliptic curves over number fields was proved to be bounded by $3/2$ in \cite{ST}. Theorem~\ref{thm:ellrank} significantly improves known results when $F\neq\Q$ is a number field or a function field of characteristic not dividing $30$.

We consider next hyperelliptic curves. Fix a positive integer $m\geq1$ and let $F$ be a global field of characteristic not $2$. A \emph{monic hyperelliptic curve} over $F$ of degree $m$ is a curve over $F$ given by an affine equation of the form
\begin{equation}\label{eq:monichyp}
C_{c_1,\ldots,c_{m}}=C_f\colon  y^2 = f(x)=x^{m} + c_1x^{m-1} + \cdots +c_{m},
\end{equation}
where $c_1,\ldots,c_{m}\in F$ and the discriminant $\Delta(f)\neq0$.

Two monic hyperelliptic curves $C_{c_1,\ldots,c_m}$ and $C_{c'_1,\ldots,c'_m}$ over $F$ are said to be equivalent if and only if there is some constant $\alpha\in F$ such that $c_i=\alpha^{2i}c'_i$ for all $i=1,\ldots,m$. Hence it is natural to view $(c_1,\ldots,c_m)$ as an element of the weighted projective space $\P(2,4,\ldots,2m).$ We define the height of $C_f$ similarly to the above: let $I$ be the fractional ideal
$$I = \{a\in F:a^{2i}c_i\in \cO,\forall i=1,\ldots,m\}.$$
Then
\begin{equation}
H(c_1,\ldots,c_m) = (NI)\prod_{\fP\in M_\infty} \max(|c_1|_\fP^{1/2},\ldots,|c_m|_\fP^{1/(2m)}).
\end{equation}
Note that it is possible for two inequivalent monic hyperelliptic curves to be isomorphic over $F$. We prove the following bound on the average rank of the Jacobians of monic hyperelliptic curves.

\begin{theoremx}\label{hyperecthm}
Fix a global field $F$ of characteristic not $2$, and a positive integer $m$. Then when all monic hyperelliptic curves over $F$ of degree $m$, up to equivalence, are ordered by height, the average rank of their Jacobians is bounded above by~$3/2$ if $m$ is odd, and $5/2$ if $m$ is even.
\end{theoremx}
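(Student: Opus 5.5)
The rank bound will come from an upper bound on the average size of $2$-Selmer groups, via the elementary inequality $2^r\le\#\Sel_2(J)$ for $r=\mathrm{rank}\,J(F)$, so $r\le \tfrac12\#\Sel_2$ once we use $2r\le 2^r$ (or more precisely $r \le \#\Sel_2 /2$ when the $2$-torsion contributes trivially). Concretely I would aim to show that the average of $\#\Sel_2(J_f)$ is at most $3$ when $m$ is odd and at most $6$ when $m$ is even. In the even case I would use $\#\Sel_2$ modulo the $2$-torsion coming from the point at infinity splitting. Since $r\le \tfrac{1}{2}\cdot 2^r$ and $2^r$ divides $\#\Sel_2$ modulo torsion, these give averages $\le 3/2$ and $\le 5/2$ (the even case loses $1/2$ from a forced extra factor of $2$).

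For the Selmer bound I will use the representations from \cite{BG1} and \cite{SW}. For $m=2n+1$ odd, use $\SO_{2n+1}$ acting on self-adjoint operators $T$ with respect to the split form, with characteristic polynomial $f$. For $m=2n+2$ even, use $\PSO$ (or $\SO$) of the split form of dimension $2n+2$ acting on self-adjoint operators $T$ with respect to that form, with characteristic polynomial $f$. These representations are coregular, with invariant ring generated by $c_1,\dots,c_m$ of the correct weights. Elements of $\Sel_2(J_f)$ correspond to locally soluble $G(F)$-orbits in $V(F)$ with invariants $f$. Moreover, every such orbit contains an integral representative at all finite places away from a controlled set, and up to scaling by the weighted action this holds globally, with weights accounted for. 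So the average Selmer size becomes a count of integral $G(\cO)$-orbits of bounded height, weighted by local conditions, divided by the number of $f$ of bounded height (given by the Schanuel-type count of \cite{AWD}).

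The counting step is where I would invoke the geometry-of-numbers machinery over global fields developed in this paper and in \cite{BSWfield}. Average over a fundamental domain for $G(\cO)\backslash G(F_\infty)$, using Siegel sets adelically in the function-field case, and thicken a bounded set of sections for the invariant map. Cut off the cusp by showing that points in the cusp with nonzero ``top-left'' coordinates are negligible, while those where it vanishes are non-generic: they correspond to identity Selmer elements, or to the extra torsion in the even case. The main term is then a product of the Tamagawa number of $G$ and local masses. The Tamagawa number is $2$ for $\SO$ and $4$ for $\PSO$-type, matching the constants $3=1+2$ and $6$ up to the torsion adjustment. Next, use the local mass formula: the weighted number of soluble orbits over $F_\fP$ equals $\#J(F_\fP)/2J(F_\fP)$ divided by $\#J[2](F_\fP)$, and this product is $1$ by the product formula $\prod_v |2|_v$-type identity. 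Since only an upper bound is needed, a uniformity estimate is unnecessary: sieving to finitely many congruence conditions gives an upper bound, and letting the set grow gives the limit.

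I expect the main obstacle to be the cusp analysis and reduction theory uniformly over arbitrary global fields. This means controlling the Siegel domain for $G(\cO)\backslash G(\A)$ when the class group is nontrivial, when there are several infinite places, and in positive characteristic. There, the ``averaging over a compact set of $G(F_\infty)$'' argument must be replaced by summing over the finite double-coset space together with a lattice-point count in $\cO$-modules, using the Riemann--Roch substitute for Davenport's lemma. My first approach would be to work adelically throughout, stratify the cusp by the parabolic subgroups, and bound the lattice points in each stratum via the generalized Davenport lemma from \cite{BSWfield}.
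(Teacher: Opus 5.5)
Your overall architecture matches the paper's. You bound the average of $\#\Sel_2$ by $3$ resp.\ $6$ via $\SO_{2n+1}$ resp.\ $\PSO_{2n+2}$ on self-adjoint operators, use Tamagawa numbers $2$ and $4$ plus one resp.\ two distinguished orbits, let the local masses cancel by the product formula, and need no uniformity estimate for $V$ for an upper bound. However, your cusp step is false as stated, and it is where the real work lies.

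You claim that integral points in the cusp have vanishing top-left coordinate and that such points are non-generic. The first half is fine. The second holds for binary quartic forms and for $\SO_3$, but not here:
\begin{itemize}
\item For $\SO_{2n+1}$, an element is distinguished exactly when some $n$-dimensional subspace isotropic for the split form is also isotropic for $B$, i.e.\ when a $G(F)$-conjugate has vanishing top-left $n\times n$ block. For $n\geq 2$ the locus $b_{11}=0$ therefore contains many stable non-distinguished elements.
\item For $\PSO_{2n+2}$ it fails already at $n=1$: the non-genericity conditions of Proposition~\ref{prop:reduciblecondition} require e.g.\ $b_{11}=b_{12}=0$ or $b_{11}=b_{13}=0$, not $b_{11}=0$ alone.
\end{itemize}
So you must show that the \emph{generic} integral points in the cusp are negligible. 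The paper slices the cusp according to the saturated set $U$ of weights forced to vanish. On each slice $T_U$ it bounds the lattice points by Davenport's lemma by $\prod_{\chi\in U_0-U}|\Lambda_X||\chi(t)|$. It then shows that the integral of this against $|\delta(t)|\,d^\times t$ over $T_U$ is $o(|\Lambda_X|^{\dim V})$ whenever $U$ contains no non-genericity set. This is Condition~2 of Theorem~\ref{prop:maintransference}: the existence of $\pi_U\in\Q^{\geq0}[U_0-U]$ with $\deg\pi_U<\#U$ and $\pi_U+\delta+\sum_{\chi\in U_0-U}\chi\in\Q^{<0}[\Delta]$. For $\SO_{2n+1}$ this is \cite[Proposition 29]{BG1}. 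For $\PSO_{2n+2}$ the cusp argument of \cite{SW} is not of this form, so the paper proves Condition~2 by induction on $n$, using the choices of $\pi_n,\pi_{n+1}$ in Table~\ref{table1} and the three-case analysis of $U^1$. Stratifying by parabolic subgroups and applying Davenport on each stratum, as you propose, does not by itself yield this estimate. The missing input is this weight combinatorics, which, being independent of $F$, is also what makes the passage to general global fields work.

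Your deduction of $5/2$ in the even case is also incorrect as written. The class of $(\infty')-(\infty)$ is not $2$-torsion; it is generically of infinite order. The inequality you implicitly use, $2^r\leq\#\Sel_2(J)/2$, therefore fails: take $r=1$ with $\Sel_2(J)$ generated by the image of that class. Were it true, it would give $3/2$, not $5/2$. No marked point is needed. Combine $2^r\leq\#J(F)/2J(F)\leq\#\Sel_2(J)$ with the elementary inequalities $r\leq 2^r/2$ and $r\leq 1+2^r/4$, valid for every integer $r\geq0$. These turn the Selmer bounds $3$ and $6$ into average ranks at most $3/2$ and $1+6/4=5/2$.
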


\noindent Theorem \ref{hyperecthm}, when $F=\Q$, was proved in \cite{BG1,SW}.

Finally, we consider the family of general (even degree) hyperelliptic curves. Fix a genus $n\geq 1$ and let $F$ be a global field of characteristic not $2$. We consider hyperelliptic curves over $F$ as curves in weighted projective space $\P(1,1,n+1)$ and expressed by an equation of the form
\begin{equation}\label{hypereq}
C_{c_0,\ldots,c_{2n+2}}=C_f\colon  z^2 = f(x,y) = c_0x^{2n+2}+c_1x^{2n+1}y+\cdots+c_{2n+2}y^{2n+2},
\end{equation}
where $c_0,\ldots,c_{2n+2}\in F$. Two curves $C_{c_0,\ldots,c_{2n+2}}$ and $C_{c'_0,\ldots,c'_{2n+2}}$ are said to be equivalent if and only if there is some constant $\alpha\in F$ such that $c_i=\alpha^{2}c'_i$ for all $i=2,\ldots,2n+2$. Hence it is natural to view $(c_0,\ldots,c_{2n+2})$ as an element of the weighted projective space $\P(2,2,\ldots,2).$ We define the height of $C_f$, and the height of $(c_0,\ldots,c_{2n+2})$, as follows: let $I$ be the ideal
$$I = \{a\in F\colon a^{2}c_i\in \cO,\forall i=0,\ldots,2n+2\}.$$
Then
\begin{equation}
H(c_0,\ldots,c_{2n+2}) = (NI)\prod_{\fP\in M_\infty} \max(|c_0|_\fP^{1/2},\ldots,|c_{2n+2}|_\fP^{1/2}).
\end{equation}

We say that a variety over $F$ is {\it locally soluble} if it has a point over $F_\fP$ for every place $\fP$ of~$F$, and is {\it soluble} if it has a point over $F$. We prove the following result on the rarity of rational points on hyperelliptic curves.

\begin{theoremx}\label{thm:generalhyperelliptic}
 Fix an integer $n\geq1$ and a global field $F$ of characteristic not $2$. When all locally soluble hyperelliptic curves over $F$ of genus $n$, up to equivalence, are ordered by height, a positive proportion of them have no points over any odd degree extension of $F$. Moreover, the proportion of locally soluble hyperelliptic curves which have no points over $F$ approaches $100\%$ as $n$ goes to infinity.
\end{theoremx}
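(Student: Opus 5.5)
We follow the strategy of \cite{BG2} over $\Q$, now over a general global field $F$. The relevant representation is $\SL_{2n+2}$ acting on pairs $(A,B)$ of symmetric $(2n+2)\times(2n+2)$ matrices (equivalently $\PSO$ of a split form acting on self-adjoint operators), with invariant binary form $f(x,y)=(-1)^{n+1}\det(Ax-By)$. For a binary form $f$ with nonzero discriminant, the $\SL_{2n+2}(F)$-orbits with invariant form $f$ that are locally soluble everywhere are parametrized by a subset of the $2$-Selmer set. The crucial feature is this: $C_f$ has a point over an odd degree extension of $F$ if and only if it has a rational divisor class of odd degree. In that case the class in $H^1(F,J[2])$ cut out by the $2$-covering $\mathrm{Pic}^1$ torsor is trivial, and the corresponding ``distinguished'' orbit exists. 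So it suffices to show that a positive proportion of locally soluble $C_f$ admit no distinguished orbit, i.e., have a nontrivial element in the relevant part of the fake $2$-Selmer set, equivalently no locally soluble torsor class meeting the obstruction.

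First, I will use the geometry-of-numbers machinery developed earlier (fundamental domains over $F_\infty=\prod_{\fP\in M_\infty}F_\fP$, reduction theory, and cusp cutting over arbitrary global fields) to count integral $\SL_{2n+2}(\cO)$-orbits, with bounded height of invariants, that satisfy local acceptability conditions. I will then sieve to the locally soluble ones via a uniformity estimate, and compute the average over locally soluble $f$ of the number of non-distinguished locally soluble orbits. As over $\Q$, this average is a Tamagawa number times a product of local masses; the Tamagawa number of $\PSO$ (or of $\SL_{2n+2}$ relative to the stabilizer $J[2]$) gives $2$ for the total and $1$ for the distinguished ones. Next, a local computation at each place computes the proportion of $f$ for which $C_f$ is locally soluble but has no point (equivalently, no rational odd-degree divisor) over some $F_\fP$ in odd-degree sense, i.e., where the relevant local class is nonzero and not ``deficient''. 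Deficient curves (no $F_\fP$-rational divisor of degree $n-1$) have positive local density at any given place, with that density tending to $1$ jointly over finitely many places as $n\to\infty$.

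Combining these gives an upper bound on the proportion having an odd degree point. The average number of nontrivial Selmer-type elements is at most a bounded constant. A curve with a point over an odd degree extension has every locally soluble torsor coming from the distinguished class, which contributes zero to the count of non-distinguished orbits. A Cauchy–Schwarz-free argument as in \cite{BG2} therefore shows that at least a fixed positive proportion lack such points. Concretely, the expected number of non-distinguished elements weighted by local data must be made greater than $0$ on a positive-density set, which is forced by the equidistribution of local Selmer images: the local conditions at distinct places are independent in the sieve. For the second assertion, I will show that the proportion with an $F$-point is bounded by $O(2^{-n})$. This uses the fact that an $F$-point forces the existence of a distinguished element in the Selmer set of $\mathrm{Pic}^1$, while the average count of locally soluble orbits for $\mathrm{Pic}^1$-type torsors is computed as $2^{-n}$ times a bounded quantity via the local mass of non-deficient orbits.

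The main obstacle is the uniformity estimate needed to sieve to the infinitely many local conditions over a general global field, especially in positive characteristic, together with controlling the cuspidal region of $\SL_{2n+2}(\cO)\backslash\SL_{2n+2}(F_\infty)$ uniformly. I will attempt this first by using the function-field-compatible Ekedahl sieve and the cusp-cutting results from \cite{BSWfield}, checking that reducible (distinguished) points in the cusp are negligible.
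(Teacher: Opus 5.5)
\textbf{The main gap} is the step that turns an average into a positive proportion. You claim that a curve with a point over an odd-degree extension contributes nothing, because its locally soluble torsors all come from a ``distinguished'' class. That is false here. The representation is $G=\SL_{2n+2}/\mu_2$ on pairs of symmetric forms; the $\PSO_{2n+2}$ picture with distinguished orbits is the monic even case with a marked point, not an equivalent model. In this representation every stable orbit is generic and $\tau_{G,F}=2$. For locally soluble $C_f$, the locally soluble orbits with invariant form $f$ are exactly $\Sel_2(J^1)$, so the count gives only $\mathrm{avg}\,\#\Sel_2(J^1)\le 2$ (Theorem \ref{thm:GHsel2countgen}).

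Now suppose $J^1(F)\neq\emptyset$. Then $\Sel_2(J^1)$ is a torsor for $\Sel_2(J)$, and $\Sel_2(J)\neq 0$. Indeed, the $J[2]$-torsor of degree-one classes $D$ with $2D$ equal to the hyperelliptic class maps to $[J^1]\in H^1(F,J)$. Local solubility of $C$ therefore puts its class in $\Sel_2(J)$, and that class is nonzero whenever, e.g., $f$ has Galois group $S_{2n+2}$. So such curves contribute at least $2$. The bound $2$ is then compatible with \emph{every} locally soluble curve having $\#\Sel_2(J^1)=2$, and no statement about equidistribution or independence of local conditions can exclude this.

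What is missing is a global parity input. Following \cite{BGW}, the paper reduces to showing $\Sel_2(J^1)=\emptyset$ for a positive proportion (Theorem \ref{thm:GHsel2}). It gets this by combining the average bound with the Dokchitser--Dokchitser theorem \cite[Theorem A.2]{BGW}, which gives a positive proportion of locally soluble curves with $\dim_{\F_2}\Sel_2(J)$ even. On those curves the nonzero class forces $\#\Sel_2(J)\geq 4$, so $\Sel_2(J^1)$ is empty or has at least $4$ elements. An average of at most $2$ then forces a positive proportion to have $\Sel_2(J^1)=\emptyset$. The paper itself notes that this parity input is currently available only over number fields. Your proposed local computation cannot play this role. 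A locally soluble curve has $F_\fP$-points at every place, hence local rational divisors of every degree, so no place is deficient and the density you want to compute is zero.

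\textbf{Two further corrections.} First, only an upper bound on $\mathrm{avg}\,\#\Sel_2(J^1)$ is needed, so AXIOM: Uniformity Estimate for $V$ is never used. Theorem \ref{thm:countingmain} gives the upper bound without it, and the uniformity estimate for $S$ follows from the geometric sieve because the family is large. The obstacle you single out is therefore not the real one.

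Second, for the assertion about $F$-points, you claim the average number of locally soluble orbits is $O(2^{-n})$. This contradicts the mass computation ($\tau_{G,F}=2$ and $c_\infty\prod_{\fP\notin M_\infty}c_\fP=1$), and if it were true the first assertion would be trivial. Deficiency is again irrelevant. The paper instead reruns the argument of \cite{B}, using only that the generic integral orbits are counted with the same order of magnitude as the curves. Whatever decays in $n$ must be a finer count that uses what an actual $F$-point forces on its orbit, as opposed to a mere element of $\Sel_2(J^1)$, and your sketch does not say what that is.
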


\noindent Theorem \ref{thm:generalhyperelliptic} when $F=\Q$ was proved in \cite{B,BGW}.

\medskip

We prove these results on rank bounds and on rational points by studying related Selmer groups. Recall that the $m$-Selmer group $\Sel_m(A)$ of an abelian variety $A$ over a global field $F$ contains a subgroup isomorphic to $A(F)/mA(F)$. Hence a bound on the average size of the $m$-Selmer groups of $A$ gives a bound on the average rank of $A$. We prove the following results on the average sizes of Selmer groups.

\begin{theoremx}\label{ecthm}
Let $n\in\{2,3,4,5\}$. Let $F$ be any global field of characteristic not dividing $6$ when $n\neq 5$, and characteristic not dividing $30$ when $n=5$.  When all elliptic curves $E$ over $F$ are ordered by height, the average size of the $n$-Selmer groups of $E$ is equal to $\sigma(n)$, the sum of divisors of $n$.
\end{theoremx}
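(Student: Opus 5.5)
We follow the strategy of \cite{BS2, BS3, BS4, BS5} over $\Q$, but carry out the counting with the global-field geometry-of-numbers of \cite{BSWfield} adapted to coregular spaces. For each $n\in\{2,3,4,5\}$ we use the classical coregular representation $(G,V)$ whose invariant ring is freely generated by two invariants of weights $4$ and $6$ (playing the roles of $A$ and $B$): binary quartic forms under $\PGL_2$ for $n=2$, ternary cubic forms under $\PGL_3$ (or $\SL_3/\mu_3$) for $n=3$, pairs of quaternary quadratic forms under a suitable quotient of $\GL_2\times\SL_4$ for $n=4$, and $5$-tuples of $5\times5$ alternating matrices under a quotient of $\SL_5\times\SL_5$ for $n=5$. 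The first step is the algebraic parametrization over an arbitrary field $K$ of characteristic prime to $6$ (resp.\ $30$): for $E=E_{A,B}$, the $G(K)$-orbits of $v\in V(K)$ with invariants $(A,B)$ that are locally soluble correspond to elements of $H^1(K,E[n])$ in the image of $E(K)/nE(K)$, and the stabilizer of such $v$ is $E[n]$. This gives a bijection between nonidentity elements of $\Sel_n(E)$ and $G(F)$-orbits of locally soluble $v\in V(F)$ with invariants $(A,B)$. Since we may scale $(A,B)$ by $F^\times$ and the class group of $\cO$ may be nontrivial, we work with integral models indexed by a representative ideal $I$ of the class group, and we show (using that $V$ has integral orbits for each local orbit after changing the lattice by $I$) that every locally soluble orbit has an integral representative in an appropriate $\cO$-lattice.

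The second step is counting. For each ideal class and each archimedean sign/splitting type we count $G(\cO)$-orbits (or orbits of a suitable arithmetic subgroup) on integral points of $V$ with height $H<X$ using a fundamental domain $\mathcal F\cdot R$ built from a Siegel set of $G(F_\infty)$ (the product over $M_\infty$, with the function-field analogue from \cite{BSWfield}) times a section $R$ of the invariant map. Averaging over the compact region as in Bhargava's method, we show the number of irreducible integral orbits with bounded height is $\Vol(G(\cO)\backslash G(F_\infty))\cdot\Vol(R_X)+o(X^{\text{(degree)}})$, while the cusp contributes only reducible points (those corresponding to the identity of the Selmer group) up to a negligible error. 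Over function fields the cusp analysis is done with the Harder--Narasimhan/Siegel-domain reduction theory of \cite{BSWfield}, and over number fields with the adelic Siegel domain; the key estimate is that lattice points in skewed boxes are counted by volume once all coordinates are large, and otherwise lie on a proper subvariety forcing reducibility.

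The third step is sieving to locally soluble points and computing densities. We impose at each finite place $\fP$ the local solubility condition, weighted by $1/\#\,\text{(number of integral orbits in a rational orbit)}$ as in \cite{BS2}, and need a uniformity estimate: the number of integral points whose discriminant is divisible by $\fP^2$ for large $N\fP$ is $O(X^{\cdot}/N\fP^{1+\delta})$, obtained from the geometric sieve (Ekedahl-type) over global fields. Then the count factors as a Tamagawa number times a product of local masses; the local mass at $\fP$ equals $\#E(F_\fP)/nE(F_\fP)$ divided by $\#E(F_\fP)[n]$ times the local density of $(A,B)$, and the product formula $\prod_\fP \#E(F_\fP)/nE(F_\fP)\,/\,\#E(F_\fP)[n]=|n|$-adjusted cancels to give Tamagawa number $\tau(G)=n$ (with $\tau(\PGL_n)$-type values computed over $F$). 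Hence the average number of nonidentity Selmer elements is $\tau(G)\cdot\#\{\text{correction}\}=\sigma(n)-1$, and adding $1$ for the identity gives $\sigma(n)$. For $n=4$, the count naturally gives $\sum_{d\mid n}$ contributions from elements of order dividing $2$ as well, which is where $\sigma(n)$ rather than $n+1$ arises.

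The main obstacle I expect is the tail/uniformity estimate for $n=4,5$ over an arbitrary global field: over $\Q$ this used explicit descriptions of the cusp and a square-free sieve, and in positive characteristic one must check the relevant degenerate loci and the Ekedahl sieve work uniformly (this is where characteristic $5$ must be excluded for $n=5$). I would first try to reduce it to a general statement about coregular representations with a finite generic stabilizer, proven once via the reduction theory of \cite{BSWfield}, and then verify the needed cusp-reducibility criterion case by case.
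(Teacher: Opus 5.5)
Your overall architecture matches the paper's: the Bhargava--Ho parametrization, integral representatives in class-group-twisted lattices, a Siegel-set fundamental domain with the cusp cut off, local weights, and a mass formula collapsing to $\tau(G_n)=n$. The genuine gap is in the uniformity estimate for $V$. That estimate is exactly what turns the upper bound $\le\sigma(n)$, which follows formally as in Theorem~\ref{thm:countingupperbound}, into the claimed equality.

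You propose to get it from the Ekedahl-type geometric sieve alone. That sieve only controls elements whose reduction modulo $\fP$ lies in a fixed closed subscheme of codimension at least $2$. The bad set $\{v:\Delta(v)\in\fP^2\}$ splits as $B^{(1)}_\fP\cup B^{(2)}_\fP$:
\begin{itemize}
\item $B^{(1)}_\fP$ consists of $v$ for which $\fP^2\mid\Delta$ is already forced by the reduction mod $\fP$ (e.g.\ a triple root or two double roots);
\item $B^{(2)}_\fP$ consists of $v$ whose reduction has a single double root (or node), for which $\fP^2\mid\Delta$ is a genuine mod-$\fP^2$ condition.
\end{itemize}
Modulo $\fP$, the set $B^{(2)}_\fP$ is cut out only by the codimension-one discriminant locus, so the geometric sieve says nothing about it. Counting mod $\fP^2$ directly via Proposition~\ref{prop:cong} gives $X^{\ord(H)}/N\fP^2$ plus an error term that swamps it once $N\fP$ is large compared with $X$. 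Yet primes with $N\fP$ as large as about $X^{6}$ must be handled, since $|\Delta|\ll X^{12}$. So no bound $O(X^{\ord(H)}/N\fP^{1+\delta})$ uniform in $\fP$ follows from your argument, and you only obtain $\le\sigma(n)$.

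The paper treats $B^{(2)}_\fP$ by other means.
\begin{itemize}
\item For $n=3,4,5$ it uses the reduction sieve (Theorem~\ref{thm:redsieve}), as in \cite{BS3,BS4,BS5}. An element of a larger group $G'\supset G$ carries each $v\in B^{(2)}_\fP$ to an integral element whose height, measured by a character of $G'$, is smaller by a power of $N\fP$, with fibres of bounded size.
\item For $n=2$ and $M\le N\fP\le X^2$, it fibres over four coefficients to get $N(B^{(2)}_\fP,G,X)\ll X^{10}/N\fP^2+X^8$, which is summable in that range.
\item For $n=2$ and $N\fP>X^2$, it embeds binary quartics into pairs of ternary quadratic forms and uses the bound $\ll X^{12}/N\fP^2$ from \cite{BSWfield} on the relevant orbits. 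It then bounds each fibre (monogenizers of bounded size of the cubic resolvent ring) by a Minkowski-basis argument. This replaces the uniform Thue-equation bounds used over $\Q$ in \cite{BS2}, which are not available over general global fields.
\end{itemize}
You need some such argument. Two smaller corrections:
\begin{itemize}
\item The cusp analysis belongs to the counting step, not to the tail estimate. In intermediate cusp regions the forced-small coordinates do not make points non-generic, so you need the estimate encoded in Condition~2 of Theorem~\ref{prop:maintransference}, not a reducibility argument.
\item For $n=4$, the count of generic orbits only gives $\tau(G_4)=4$ for elements of exact order $4$. You must add the average of $\#\Sel_4(E)[2]=\#\Sel_2(E)$ (valid when $E(F)[2]=0$), which is $3$ by the case $n=2$.
\end{itemize}
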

\noindent Theorem \ref{ecthm} was proved in \cite{BS2,BS3,BS4,BS5} when $F=\Q$.
An upper bound of $3$ on the average size of the $2$-Selmer groups was proved in \cite{ST} when $F$ is a number field. Upper bounds on the average sizes of the $2$-Selmer and $3$-Selmer groups were proved in \cite{Ngo} and \cite{dJ}, respectively, when $F$ is a function field.

\medskip

Theorems \ref{hyperecthm} and \ref{thm:generalhyperelliptic} regarding rational points on and ranks of Jacobians of hyperelliptic curves are proved via results on $2$-Selmer groups and $2$-Selmer sets of Jacobians of hyperelliptic curves. We prove the following theorem.

\begin{theoremx}\label{hyperecthmsel}
Fix a positive integer $m$ and a global field $F$ of characteristic not $2$. Then when all monic hyperelliptic curves over $F$ of degree $m$, up to equivalence, are ordered by height, the average size of the $2$-Selmer groups of their Jacobians is bounded above by~$3$ if $m$ is odd and by $6$ if $m$ is even.
\end{theoremx}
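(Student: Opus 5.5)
We follow the strategy of \cite{BG1} (for $m$ odd) and \cite{SW} (for $m$ even), now carried out over a general global field $F$ with the geometry-of-numbers machinery of \cite{BSWfield} and this paper. The relevant coregular representation is $V=\Sym_2$ of an $m$-dimensional space, i.e.\ pairs $(A_0,T)$ with $A_0$ a fixed split quadratic form of discriminant $\pm1$ and $T$ self-adjoint with respect to $A_0$, under the action of $G=\SO(A_0)$ (for $m$ odd) or a suitable form of $\PSO(A_0)$ (for $m$ even). The invariants are the coefficients of $\det(A_0x-T)$, hence equal to $(c_1,\ldots,c_m)$ up to sign, with weights $2,4,\ldots,2m$. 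The height on $V(F)$ therefore matches $H(c_1,\ldots,c_m)$.

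\emph{Step 1 (parametrization).} For each $f$ with $\Delta(f)\neq0$, we identify nonidentity elements of $\Sel_2(J_f)$ (or, when $m$ is even, a quotient or extension of it by the class of the divisor at infinity) with locally soluble $G(F)$-orbits on $V(F)$ with invariant $f$. These orbits are required to be distinguished, or non-distinguished in the appropriate sense. The identity corresponds to the distinguished orbits. This is Galois cohomology: $G$ has trivial Tamagawa-type obstructions, stabilizers are $J_f[2]$ (or a related finite group scheme), and Hasse principles hold for $G$. All of this works verbatim in characteristic $\neq2$. We then show that every locally soluble orbit contains an integral element, i.e.\ one in $V(\cO)$ after allowing a fixed finite set of $\cO$-lattices for the class group. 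This follows by a local argument at each $\fP\notin M_\infty$, combined with strong approximation for $G$ and a bounded class-number correction.

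\emph{Step 2 (counting).} We count $G(\cO)$-orbits of irreducible (non-distinguished) integral elements of height $<X$, using the averaging over fundamental domains from \cite{BSWfield}. The fundamental domain is built from a Siegel set for $G(F)\backslash G(\A_F)$ times a section of the invariant map over $\prod_{M_\infty}F_\fP$. The count equals $\Vol(G(\cO)\backslash G(F_\infty))$ times the volume of the region of bounded height, up to $o(X^{\dim})$, once we show the cusp contributes negligibly among non-distinguished elements. \textbf{This cusp analysis is the main obstacle.} Over $\Q$ one cuts the Siegel domain along the torus coordinates and shows that points with a small top-left coefficient are distinguished. Over a general global field the torus is a product over $M_\infty$ (with class-group twists), and for function fields the ``small'' condition becomes vanishing by Riemann--Roch. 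The plan is to reformulate the argument through the valuation-vector $\ord$ of the torus element: count lattice points in boxes adelically, and use that an integral coefficient of negative total degree vanishes. Then one repeats the combinatorial bookkeeping of \cite{BG1,SW} on which coefficients are forced to vanish.

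\emph{Step 3 (sieve and local masses).} We impose the local-solubility conditions by a uniformity estimate, namely a tail bound on elements whose discriminant is divisible by $\fP^2$ for large $\fP$, giving only an upper bound in general. This requires the squarefree-sieve argument of \cite{BSWfield}. The count then becomes a product of local masses. By the product formula, the Tamagawa number $\tau(G)=2$ (or $4$ for the even case, accounting for the extra $\mu_2$) multiplies the local mass ratios, each of which equals $1$ by the standard computation $\#J_f[2](F_\fP)/\#(J_f(F_\fP)/2J_f(F_\fP))\cdot|2|_\fP^{\ast}$. Hence the average number of non-identity elements is at most $2$ for $m$ odd and at most $4$ (with the distinguished contribution adding $2$) for $m$ even. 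This yields the bounds $3$ and $6$.
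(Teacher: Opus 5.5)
Your outline has the same architecture as the paper's proof: the same representations of $\SO_{2n+1}$ and $\PSO_{2n+2}$, class-group lattices $V_\beta$, geometry of numbers, the product formula for the masses, and $\tau_{G,F}=2$ or $4$ plus $1$ or $2$ distinguished classes. But the step you yourself call the main obstacle is only sketched, and the sketch misdescribes what has to be proved.

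It is not true here that integral points with vanishing top-left coefficient are distinguished. The condition $b_{11}=0$ only makes $e_1$ isotropic for both forms and does not force non-genericity. What forces it is the vanishing of whole configurations of coordinates (Proposition~\ref{prop:reduciblecondition} in the even case, and its analogue in \cite{BG1}). So the cusp must be cut into pieces indexed by every saturated set $U$ of coordinates forced to vanish. For each $U$ containing no such configuration, you must show the lattice-point count there is $o(X^{\ord(H)})$.

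The idea you are missing is Theorem~\ref{prop:maintransference}. Over any global field it reduces this to three conditions: exhibiting $\pi_U\in\Q^{\geq0}[U_0-U]$ with $\deg\pi_U<\#U$ and $\pi_U+\delta+\sum_{\chi\in U_0-U}\chi\in\Q^{<0}[\Delta]$, together with $\alpha\leq\pi_\alpha$ and $\prod_\fP\lambda_\fP=0$. The vanishing of small coordinates is supplied uniformly by the product formula; this is your ``negative total degree'' remark. The combinatorial conditions are independent of $F$ since $G$ is split over $\Z$. For $m$ odd they are exactly what \cite{BG1} proved over $\Q$. For $m$ even, \cite{SW} did not cut the cusp in this form, so there is nothing ready to transfer. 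The paper verifies Conditions 2 and 3 for $\PSO_{2n+2}$ from scratch, choosing the $\pi_k$ of Table~\ref{table1} and proving \eqref{eq:Cond3deg} by induction on $n$.

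Step 3 also misplaces the sieve. An upper bound needs no tail estimate on $V$: one imposes finitely many local conditions, counts, and lets the set of primes grow (Theorem~\ref{thm:countingupperbound}). A tail bound on elements of $V$ with $\fP^2\mid\Delta$ is AXIOM: Uniformity Estimate for $V$. That would give equality, and it is open for these representations even over $\Q$, so your argument must not rely on it.

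What the bound on the average does need is a tail estimate on $S$, so that the count of curves in the denominator has its expected asymptotic. For the family of all monic curves, the local condition at almost all $\fP$ is just $(c_i)\in S'(\cO_\fP)$, so this is an elementary Schanuel-type count. The squarefree-discriminant sieve enters only for general large families (Theorem~\ref{thm:oddhyper}), and the paper takes it from \cite{Oller}, not from \cite{BSWfield}.

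Smaller points:
\begin{itemize}
\item Integral representatives at primes above $2$ require first rescaling the family (the paper uses $\kappa^2\kappa_1.\Sigma_1$, via Proposition~\ref{prop:oddhypersol}).
\item These groups are not simply connected, so strong approximation is unavailable. What replaces it is the class-group decomposition plus the weight $m$ of \eqref{eq:defnofm}, which corrects for $F$-orbits meeting several $V_\beta$ or splitting into several integral orbits. It is this weight that assembles the local volumes into $\tau_{G,F}$.
\item The local ratios $c_v$ equal $|2|_v^{-g}$, with $g$ the genus. They are not individually $1$ and cancel only through the product formula.
\end{itemize}
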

\noindent Theorem \ref{hyperecthmsel}, when $F=\Q$, was proved in \cite{BG1,SW}.

For a hyperelliptic curve $C$ over $F$, let $J$ denote its Jacobian and let $J^1$ denote the principal homogeneous space for $J$ whose points correspond to the divisor classes of degree one on $C$. When a hyperelliptic curve $C$ is locally soluble, $C$ has no points over any odd degree extension of $F$ if and only if $J^1$ has no point over $F$. A \emph{two-cover} of $J^1$ is an unramified cover  $\pi: Y \rightarrow J^1$ by a principal homogeneous space $Y$ of $J$ such that for all $y \in Y$ and $a \in J$, we have $\pi(y + a) = \pi(y) + 2a \in J^1$. We define the $2$\emph{-Selmer set} $\Sel_2(J^1)$ of $J^1$ as the set of isomorphism classes of locally soluble two-covers of $J^1$. Any rational point $e$ on $J^1(F)$ gives rise to a locally soluble (in fact soluble) two-cover of $J^1$; namely, after identifying $J^1$ with $J$ by subtracting $e$, we pull back the multiplication-by-$2$ map on $J$. Therefore, to prove the statement on odd degree points of Theorem \ref{thm:generalhyperelliptic}, it suffices to prove the following stronger statement.

\begin{theoremx}\label{thm:GHsel2}
 Fix an integer $n\geq1$ and a number field $F$. When all locally soluble hyperelliptic curves $C$ over $F$ of genus $n$, up to equivalence, are ordered by height, a positive proportion have empty $\Sel_2(J^1)$.
\end{theoremx}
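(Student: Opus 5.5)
Following the strategy of \cite{B,BGW} over $\Q$, I would parametrize two-covers of $J^1$ by orbits of $G=\SL_{2n+2}/\mu_2$ on pairs of symmetric bilinear forms $V=2\otimes\Sym_2(2n+2)$, with invariant the binary form $f(x,y)=(-1)^{n+1}\det(Ax-By)$. The argument has four steps.

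\textbf{Step 1 (parametrization).} Over any field $K$ of characteristic not $2$, for $f$ with $\Delta(f)\neq0$ and $J^1(K)$ (equivalently $\Pic^1$) nonempty, the $G(K)$-orbits on $V(K)$ with invariant $f$ correspond to (a subset of) two-covers of $J^1$. In particular, locally soluble two-covers, i.e.\ elements of $\Sel_2(J^1)$, correspond to \emph{locally soluble} orbits: orbits in $V(F)$ that are soluble over every $F_\fP$. Hence if $\Sel_2(J^1)\neq\emptyset$, the form $f$ must be the invariant of some $G(F)$-orbit on $V(F)$. I would then show that, away from a finite set of places, each such orbit has an integral representative in $V(\cO_\fP)$. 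Integrality at the remaining places is arranged by scaling $f$ in its equivalence class (by $\alpha^2$ in $\P(2,\ldots,2)$) and choosing a suitable lattice, together with finiteness of the class group of $\cO$.

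\textbf{Step 2 (counting).} Using the geometry-of-numbers method developed in this paper over global fields (fundamental domains for $G(\cO)\backslash G(F_\infty)$, averaging over Siegel sets, cutting off the cusp), I would count integral $G(\cO)$-orbits in $V(\cO)$ with height at most $X$ that satisfy local solubility conditions. The count would be bounded by $c\,X^{2n+3}$ times a product of local masses, up to $o(X^{2n+3})$. Since I only need an upper bound, the uniformity estimate required for sieving can be replaced by a crude tail bound, as in \cite{B}.

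\textbf{Step 3 (local conditions).} Choose a single place $\fP$ (or a finite set of places) and an open set $U_\fP$ of binary forms $f$ such that $C_f$ is locally soluble at $\fP$ but $J^1$ admits no locally soluble two-cover. For example, take $f$ such that every element of $V(F_\fP)$ with invariant $f$ is $F_\fP$-insoluble, which forces $\Sel_2(J^1)$ to be empty. Over $\Q$ this is done at $\fP=\infty$ or at $2$. Over a general global field I would instead use a finite odd place $\fP$ with suitable residue field, taking $f$ to be, say, $\pi\cdot(\text{form with irreducible reduction of degree } 2n+2)$ plus a unit adjustment. The aim is that $J^1(F_\fP)$ is nonempty while the local image $J(F_\fP)/2J(F_\fP)$ and the torsor obstruction force all two-covers to be insoluble. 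This local construction is the step I expect to be the main obstacle, since the real-place trick is unavailable in function fields. I would first try the \cite{BGW} argument via the parity of the number of local orbits, computing local masses: the weighted count of soluble orbits at $\fP$ equals $|J(F_\fP)/2J(F_\fP)|$, which is controlled by the Tamagawa-type formula of Step 4.

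\textbf{Step 4 (comparison).} Multiplying local masses via the product formula and the Tamagawa number $\tau(G)=4$ gives an average of $\Sel_2(J^1)$ over $f$ restricted to a positive-density, locally soluble family. The key estimate is that the average of $|\Sel_2(J^1)|$ over $f\in U_\fP$ is strictly less than $1$ times the local-solubility density. This follows if the local mass of soluble orbits over $U_\fP$ is small compared with the density of locally soluble curves, for instance because the mass equals zero on a positive-measure subset. Then a positive proportion of the family has $|\Sel_2(J^1)|=0$, which proves Theorem~\ref{thm:GHsel2}.
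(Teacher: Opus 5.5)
\textbf{There is a genuine gap in Steps 3--4.} Your Steps 1--2 match what the paper does: this is Theorem~\ref{thm:GHsel2countgen}, i.e.\ Theorem~\ref{thm:GHsel2count}. One correction there: the resulting bound is $\tau_{G,F}=2$ for $G=\SL_{2n+2}/\mu_2$, not $4$; the value $4$ belongs to $\PSO_{2n+2}$ in the monic even case.

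Steps 3--4 cannot work, for two reasons.
\begin{itemize}
\item If $C_f$ is $F_\fP$-soluble, then $J^1_f(F_\fP)\neq\emptyset$. The $F_\fP$-soluble orbits with invariant $f$ are in bijection with $J^1_f(F_\fP)/2J_f(F_\fP)$, which is therefore nonempty. So your requirement that $J^1(F_\fP)\neq\emptyset$ while every two-cover is insoluble at $\fP$ is contradictory. There is no open set $U_\fP$ of locally soluble $f$ on which all orbits are insoluble, and the local mass $|J_f(F_\fP)/2J_f(F_\fP)|/|J_f[2](F_\fP)|$ never vanishes.
\item Since $c_\infty\prod_{\fP}c_\fP=1$ for every $f$, the mass computation returns exactly $\tau_{G,F}=2$ on every large family, whatever finitely many local conditions you impose. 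No choice of local conditions brings it below $1$.
\end{itemize}
For a locally soluble curve, emptiness of $\Sel_2(J^1)$ is a purely global phenomenon: the local cosets are nonempty everywhere but share no global class. It is invisible at any finite set of places. Note also that your argument, if it worked, would apply verbatim over function fields, where the theorem is deliberately not claimed.

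\textbf{The missing idea is a global parity input.} When nonempty, $\Sel_2(J^1)$ is a coset of $\Sel_2(J)$, so $|\Sel_2(J^1)|\in\{0,|\Sel_2(J)|\}$. An average of at most $2$ is by itself consistent with every curve having nonempty $\Sel_2(J^1)$ of size $1$ or $2$. The paper therefore combines Theorem~\ref{thm:GHsel2count} with the Dokchitser--Dokchitser result \cite[Theorem A.2]{BGW} on the parity of $2$-Selmer ranks of $J$, just as \cite[Theorem 3]{BGW} is deduced from \cite[Theorem 4]{BGW}. The parity input makes nonempty Selmer sets too large, on a positive-density subfamily, to be compatible with an average of at most $2$.

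Schematically, for $100\%$ of $f$ a nonempty $\Sel_2(J^1)$ already has at least two elements:
\begin{itemize}
\item If $J^1(F)=\emptyset$, the class of $J^1$ is a nonzero $2$-torsion element of the Shafarevich--Tate group, and it lifts to a nonzero element of $\Sel_2(J)$.
\item If $D\in J^1(F)$, let $\mathfrak{h}$ be the hyperelliptic class. Then $D$ and $\mathfrak{h}-D$ have distinct images in $J^1(F)/2J(F)$ once $f$ has Galois group $S_{2n+2}$.
\end{itemize}
A constraint forcing $\dim_{\F_2}\Sel_2(J)$ to be even on a subfamily of positive density then raises this to at least four elements there. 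The average bound then leaves at most half of that subfamily with $\Sel_2(J^1)\neq\emptyset$. That parity result is only available over number fields, which is why the statement is restricted to them.
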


To prove this theorem, we compute the average size of $\Sel_2(J^1)$ over any large family of locally soluble hyperelliptic curves over $F$.

\begin{theoremx}\label{thm:GHsel2count}
 Fix an integer $n\geq1$ and a global field $F$ of characteristic not $2$. When all locally soluble hyperelliptic curves $C$ over $F$ of genus $n$, up to equivalence, are ordered by height, the average size of $\Sel_2(J^1)$ is bounded above by $2$.
\end{theoremx}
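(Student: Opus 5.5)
We follow the strategy of \cite{BGW} over $\Q$, now using the counting machinery for coregular spaces over general global fields developed in this paper. The relevant representation is $V=\Sym^2(2n+2)$ (pairs $(A,B)$ of symmetric bilinear forms on a $(2n+2)$-dimensional space) with the action of $G=\SL_{2n+2}/\mu_2$ on $B$. The invariant polynomial is $f(x,y)=(-1)^{n+1}\det(Ax-By)$, whose coefficients are the $c_i$; we restrict to the subset where $A$ is split, i.e.\ fixed $A_0$ of discriminant $1$, giving the representation of $\SO(A_0)$ on $\Sym^2$. First we establish the parametrization: for a hyperelliptic curve $C_f$ with $f$ separable, the $G(F)$-orbits of elements $(A,B)$ with invariant $f$ (in fact, those that are ``distinguished'' in the sense relevant to $J^1$) correspond to two-covers of $J^1$ in the sense above. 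In particular, elements of $\Sel_2(J^1)$ correspond to $G(F)$-orbits that are locally soluble, i.e.\ locally in the image at every place. This is an algebraic input carried over from \cite{BGW}, using the theory of pencils of quadrics. Since the arguments only need characteristic not $2$, it transfers verbatim to any global field $F$.

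Second, we prove integrality. Up to scaling $f$ by a square-class representative (allowed by the equivalence relation and the definition of $I$), each locally soluble orbit has an integral representative in $V(\cO)$. If $\Pic(\cO)$ is nontrivial, it has a representative in a finite collection of lattices $V(\mathfrak a)$ indexed by ideal class representatives. We would handle this as in our treatment of prehomogeneous spaces in \cite{BSWfield}: work adelically, with the class number of $G$ entering through the Tamagawa number $\tau(G)$, which equals $2$ for the relevant group $\SO$-type quotient (or through the corresponding factor $4$ vs.\ $2$ in the formula). Third, we count the $G(\cO)$-orbits on irreducible integral elements of bounded height using the fundamental domain/averaging methods of this paper. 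We show that the reducible points (those corresponding to the identity class, i.e.\ distinguished ones with a rational isotropic flag) and the cusp contribute negligibly, yielding main term $\tau(G)\,\Vol(G(\cO)\backslash G(\A_{\rm fin})\times\cdots)\cdot$ (count of invariants). Fourth, we apply a uniformity estimate together with a weighted sieve to local conditions: we weight each orbit by $1/\#$(the number of integral orbits in its rational orbit, or the stabilizer size), and impose local solubility at each place. The resulting product of local masses equals, via the local Tamagawa computation, $\int$ over local invariants of $\#(J[2](F_\fP))/\#(\text{image})$-type ratios that multiply out to $1$ times the volume of the space of locally soluble $f$ of bounded height. Dividing by that count then gives average $\le\tau(G)=2$. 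We only obtain an upper bound because the tail sieve is used with inequality, which avoids the full uniformity estimate.

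The main obstacle will be the geometry-of-numbers count over a general global field. Specifically, we must bound the number of integral points in the cuspidal regions of the fundamental domain, uniformly across all archimedean places (or the places in $D$ in the function field case), and show that points with the top-left coefficient $b_{11}=0$ are essentially all distinguished. Over a general $F$, the Siegel domain involves a torus $T(F_\infty)$ modulo units, and the non-distinguished points must be counted in boxes of varying shapes. We would first try the strategy of this paper: average over a compact set of $G(F_\infty)$ translates and slice by the class group. We would then check that the exponents of the torus characters on each coefficient give convergent integrals exactly as over $\Q$, since those exponents depend only on the representation.
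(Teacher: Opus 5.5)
\textbf{There is a genuine gap, and it is in the setup.} You cannot fix $A=A_0$ split and let $\SO(A_0)$ act on $B$ alone. Under $G=\SL_{2n+2}/\mu_2$, acting by $g.(A,B)=(g^tAg,g^tBg)$ on \emph{both} forms, $\det A=(-1)^{n+1}c_0$ is itself an invariant. So $A=A_0$ forces $c_0=1$, and the equivalence $c_i\mapsto\alpha^2c_i$ only reaches curves whose leading coefficient is a nonzero square, which is a density-zero subfamily. That slice, with stabilizer $\SO(A_0)/\mu_2=\PSO_{2n+2}$, is exactly the monic even representation of \S\ref{sec:monicevenhyper}. It has $2n+2$ invariants instead of $2n+3$ and Tamagawa number $4$ instead of $2$. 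It parametrizes $\Sel_2(J)$ of monic even curves (where $J^1(F)\neq\emptyset$ automatically), with bound $6$. The correct object is the full space $V=\Sym_2(2n+2)\oplus\Sym_2(2n+2)$, with $2n+3$ invariants of degree $2n+2$ and $\tau_{G,F}=2$.

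Your counting step inherits the same confusion. $\Sel_2(J^1)$ has no identity class, and here the generic elements are simply the stable ones. The $F$-soluble (``distinguished'') orbits, those with $F_{A,B}(F)\neq\emptyset$, form the image of $J^1(F)/2J(F)$ and are genuine elements of the Selmer set. Discarding them, as in the monic argument about $b_{11}=0$, would only bound $\#\Sel_2(J^1)$ minus that image. What is actually needed is that the whole cuspidal region (e.g.\ where $a_{11}=b_{11}=0$) contains $o(X^{\ord(H)})$ stable integral points. This is Condition~2 of Theorem~\ref{prop:maintransference}, with non-generic meaning non-stable, via \cite[Proposition 12]{B}. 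Soluble points lying in the main body are then counted by volume.

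You also omit the two steps where this case genuinely differs from the earlier ones. First, there is no algebraic section $S\to V$ for this representation. So AXIOM: Local Spreading, which underlies both the change-of-measure formula behind your ``local Tamagawa computation'' and the construction of the sets $R_\lambda$, must be proved by hand. The paper does this by choosing an $F_\fP$-rational non-Weierstrass point of $C_f$ that moves analytically near $f$, which reduces to the monic even section. Your slice hides this issue only because it \emph{is} the monic case. Second, dividing by the number of locally soluble curves of bounded height requires AXIOM: Uniformity Estimate for $S$. This comes from the geometric sieve (Theorem~\ref{thm:geosieve}), because locally soluble curves form a large family in the codimension-$\geq 2$ sense \cite[Lemma 15]{PS}.

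Finally, integrality should be organized through the lattices $V_\beta$ for $\beta\in\cl(G)$, not through ideal classes of $\cO$. Proposition~\ref{prop:evenhypersol} supplies local integral representatives after scaling by $\kappa^2$, and the weight $m$ corrects for a rational orbit meeting several $V_\beta$.
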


\noindent Theorems \ref{thm:GHsel2} and \ref{thm:GHsel2count}, when $F=\Q$, were proved in \cite{BGW}. As in \cite{BGW}, Theorem \ref{thm:GHsel2} follows by combining Theorem~\ref{thm:GHsel2count} with a result of Dokchitser--Dokchitser \cite[Theorem A.2]{BGW}. 
The function field analogue of this result is the only missing ingredient to proving Theorem \ref{thm:GHsel2} for function fields with characteristic not $2$.

\medskip

The proofs of our main results are obtained by suitably generalizing the techniques developed in the aforementioned works from $\Q$ to global fields. Specifically, in each case, we consider a coregular representation $V$ of a reductive group $G$ such that the space of invariants corresponds to the family of curves in question. The arithmetic invariant theory of these representations implies that there is a bijection between elements in the desired Selmer groups/sets and certain $G(F)$-orbits, called {\it locally soluble} orbits, in $V(F)$ defined by local conditions at every place of $F$. We partition the relevant orbits into two sets: non-generic ones that correspond to Selmer elements of lower order; and generic ones. The goal then is to apply geometry-of-numbers and sieve techniques over a global field to count the generic orbits, and evaluate the final asymptotic constants using a mass formula. 

The main technical results of this paper are in two parts. First in \S\ref{sec:general}, we give a list of ``axioms'' on the arithmetic of $(G,V)$, which when satisfied allow us to carry out each of the three above steps, namely, the geometry-of-numbers count, the sieve, and the mass formula. This is the essence (and generalization to the global field case) of the strategy used in the literature for the $F=\Q$ case. In the literature, these axioms have been verified over $\Q$ using inputs from algebraic and analytic number theory, combinatorics, algebraic geometry, and representation theory. Our second set of main technical results, established in \S\ref{sec:count}, prove that once these inputs over $\Q$ have been established, the axioms in question can in fact then be verified (with very limited extra effort) over general global fields. In fact, our main results, Theorems \ref{thm:ellrank} through \ref{thm:GHsel2count}, which are proved for $F=\Q$ in \cite{BS2,BS3,BS4,BS5,BG1,SW,B,BGW}, are all established here for general global fields $F$ in \S\ref{sec:elliptic} and \S\ref{sec:hyper}, using our main technical results.

We now describe the general strategy for counting $F$-rational orbits in more detail. First we reduce to counting integral orbits. Because the class group of $G$ over $F$ is not necessarily trivial, it is not true that every locally soluble $G(F)$-orbit with integral invariants contains an element of $V(\cO)$. For every element $\beta$ in the class group of $G$ over $F$, we construct a lattice $V_\beta$ commensurable with $V(\cO)$ and a natural subgroup $G_\beta$ of $G(F)$ commensurable with $G(\cO)$ that acts on $V_\beta$. We show that every locally soluble $G(F)$-orbit in $V(F)$ contains an element of $V_\beta$ for some $\beta$ and so we turn to counting $G_\beta$-orbits on $V_\beta$. Abusing notation, we still call these integral orbits. We note that a locally soluble $G(F)$-orbit in $V(F)$ may contain an element of $V_\beta$ for multiple $\beta$, and an extra weight function is needed to remedy this extra complication.

Embed $F$ into $F_\infty=\prod_{\fP\in M_\infty}F_\fP$ and $V(F)$ into $V(F_\infty)$. We construct (a cover of) a fundamental domain $\FF$ for the action of $G_\beta$ on the subset $V(F_\infty)^{\text{sol}}$ of $V(F_\infty)$ consisting of elements soluble at all the places in $M_\infty$ and then count the $V_\beta$-points in $\FF$. We construct $\FF$ from a nicely shaped fundamental domain for the action of $G(F_\infty)$ on $V(F_\infty)^{\text{sol}}$ and from a fundamental domain for the action of $G_\beta$ on $G(F_\infty)$ using the reduction theory of reductive groups (\cite{Sp}). We construct the set $G(F_\infty)\backslash V(F_\infty)$ so that the subsets of elements having bounded height are homogeneously expanding.

We break up $G_\beta\backslash G(F_\infty)$ into two pieces: a compact part which we call the {\it main body}; and a non-compact part which we call the {\it cuspidal region} or {\it cusp}. For the main body, we count the lattice points using a suitable generalization of Davenport's lemma (Proposition \ref{prop:Davenport}). For the cuspidal region, the error term in approximating the number of lattice points by volumes is larger than the main term. We slice the cuspidal region into finer slices and estimate the total contribution to the total number of lattice points coming from each slice. The slicing and the estimates for the cuspidal region have been carried out over $\Q$ and were a crucial step in obtaining the arithmetic statistical results over $\Q$. They remain among the most important steps over arbitrary global fields. We reduce the results needed to some combinatorial conditions on the characters of the maximal split torus of $G$. In particular, when $G$ is split over $\Z$ and the representation $V$ of $G$ is base-changed from $\Z$ as is the case in all our applications in this paper, we prove that the validity of these combinatorial conditions is purely algebraic, and is independent of the global field $F$. 

Once the integral orbits are counted, we sieve to the locally soluble rational orbits by counting integral orbits where each orbit is weighted by a carefully constructed weight function. This weight function encodes the following information as a product of local functions: the breaking of one rational orbit into multiple integral orbits; the difference in the rational stabilizers and the integral stabilizers; and the local solubility conditions.  
Standard sieving techniques give an upper bound for the number of orbits. To show that it is also a lower bound, uniformity estimates for the error terms are required. We prove the desired uniformity estimates for the representations related to Selmer groups of elliptic curves, but not for the ones related to Selmer groups of Jacobians of hyperelliptic curves (which are currently unproved for the case $F=\Q$ as well). As part of our technical results, we prove that the techniques for obtaining these uniformity estimates over $\Q$ carry over to the general case of global fields. 


Once the counting and sieving results are proved, it only remains to compute the product of the various local volumes that arise. At places in $M_\infty$, these are the volumes of sets that arise when applying geometry of numbers methods. At finite places, these are the volumes of the local weight functions that encode our desired conditions. When multiplied together, these local volumes admit a lot of cancellation, due to the product formulas for global fields and abelian varieties. All that remains is the Tamagawa number of $G$ over $F$. Adding in the contribution from (the previously excluded) non-generic orbits then gives the total number of locally soluble orbits, completing the proofs.


\medskip

Field extensions of low-degrees (degrees $2$, $3$, $4$, and $5$) are parametrized by orbits in prehomogeneous vector spaces. The count of these low degree field extensions of $\Q$, ordered by discriminant, were carried out in the works of Davenport--Heilbronn \cite{DH} and the first-named author \cite{dodqf,dodpf}. In the previous article \cite{BSWfield} of this series, we generalized these works from the setting $F=\Q$ to general global fields $F$. This article, however, requires a number of additional ideas due to some important differences between the prehomogeneous and the general coregular cases.

First, in the prehomogeneous cases, each $G(F)$-orbit contains an element of $V_\beta$ for a unique $\beta$ (corresponding to the Steinitz class of the field extension). Thus we needed to count orbits in $V_\beta$ and then sieve to maximal ones. In the general coregular cases studied in this paper, however, this uniqueness does not hold. Moreover, there is no clear subset that needs to be counted. Instead, we construct weight functions at each local place, and then carry out weighted counts.

Furthermore, in the prehomogeneous cases \cite{BSWfield}, the fundamental set $G(F_\infty)\backslash V(F_\infty)$ is relatively simple (in fact just a finite set) and we only needed $G_\beta\backslash G(F_\infty)$ to be nicely shaped. In the coregular case, the needed fundamental sets have to be carefully constructed, to ensure that as our height increases, we are counting lattice points in homogeneously expanding bounded domains.
The ``cutting off the cusp'' part of the geometry-of-numbers input was done in a case-by-case manner in \cite{BSWfield}. In this paper, we give an axiomatic description of the necessary combinatorial inputs for this step, in order to carry out the counts over any global field.

The sieving and uniformity steps in the prehomogeneous cases were again relatively simple --- there were discriminant lowering operations (which we discuss here as ``reduction sieves''), which when combined with the counting step, yielded the required results relatively easily. In our case, the uniformity estimates are significantly more difficult, and require a combination of the reduction sieves, generalizations of the Ekedhal sieve, and ``embedding sieves''.

Many further results in the literature, including those of \cite{BH1, LR, Laga, RT, Siad1, Siad2, Swaminathan1, Th, Th1}, may also be extended to general global fields using the techniques of this paper and its prequel. More specifically, techniques in \cite{BSWfield} can be used to count orbits on prehomogeneous representations of reductive groups over global fields, while the techniques developed in this article can be used to count orbits on coregular representations of semisimple groups over global fields.

\medskip

This paper is organized as follows. In Section \ref{sec:notation}, we fix our notations, and prove some preliminary results on counting integral points in affine and weighted projective spaces. In Section \ref{sec:general}, we work in the most general setting of a coregular representation of a reductive group. We present a list of axioms, and prove a general formula for the number of orbits (with bounded height) which holds once this list of axioms is known to be satisfied. In Section \ref{sec:count}, we then describe the various methods we develop for proving these axioms. Finally, in Sections \ref{sec:elliptic} and \ref{sec:hyper}, we apply our general results to the cases of elliptic curves and hyperelliptic curves, respectively, and prove our main theorems.

\section{Preliminaries}\label{sec:notation}
In this section, we introduce notation and the height functions that will be used throughout this paper. We also prove some results on counting lattice points in affine and weighted projective spaces.

\subsection{Notation}\label{sec:not}
Let $F$ denote a fixed global field. That is, $F$ is either a number field or the field of rational functions of a smooth projective and geometrically connected algebraic curve $\sc$ over $\F_q$. When $F$ is a number field, let $\cO$ denote the ring of integers of $F$ and let $M_\infty$ denote the set of archimedean places of $F$. When $F$ is the field of rational functions of a smooth projective and geometrically connected algebraic curve $\sc$ over $\F_q$, fix some nonempty finite set $D$ of closed points on $\CC$, let $\cO$ denote the ring of regular functions on $\sc-D$ and let $M_\infty$ denote the finite set of places corresponding to $D$. 

Let $\A$ denote the ring of adeles. Let $\A_f$ denote the ring of finite adeles, that is the restricted direct product of $F_\fP$ over primes $\fP\notin M_\infty$ and let $F_\infty$ denote the product of the completions $F_\fP$ for $\fP\in M_\infty$.

For every non-archimedean place $w$ of $F$, denote by $F_w$, $\cO_w$, and $k(w)$ the completion of $F$ at $w$, the ring of integers at $w$ and the residue field at $w$, respectively. Let $Nw=|k(w)|$ denote the norm of $w$. For any $a\in F_w,$ define its $w$-adic norm by $|a|_w=(Nw)^{-w(a)}$ where $w(a)$ denotes the $w$-adic valuation of $a$. Define $\deg w = [k(w):\F_q]$.

When $F$ is a number field and $w$ is archimedean, we use the usual absolute value for the norm $|\cdot|_\infty$ on $\R$ and define $|a|_w=|N_{F_w/\R}(a)|_\infty$ for any $a\in F_w$. We have the product formula $\prod_w |a|_w=1$ for any $a\in F^\times$. We define $\deg w = 2$ if $F_w = \C$ and $\deg w = 1$ if $F_w = \R$.

In both cases, we define $$d_F = \sum_{w\in M_\infty}\deg w.$$
Then $d_F$ equals $\dim_\R F_\infty = [F:\Q]$ when $F$ is a number field; and equals the number of points in $D$ over the algebraic closure when $F$ is a function field.

For any adele $a=(a_\fP)_\fP\in\A$, we write $|a|$ for the adele norm $\prod_{\fP}|a_\fP|_\fP.$ We also view $\A_f$ and $F_\infty$ as subrings of $\A$ and use $|\cdot|$ to denote the restriction of the adele norm to these subrings.

\subsection{Heights and fundamental domains for weighted projective spaces}\label{sec:heightweighted}

In this subsection, we discuss some generalities regarding weighted projective spaces. This will be used in Sections \ref{sec:elliptic} and \ref{sec:hyper}, where we view the coefficients of defining equations of elliptic curves and hyperelliptic curves, respectively, as points in weighted projective space.

Let $w_1,\ldots,w_n$ be positive integers and let $\P(w_1,\ldots,w_n)$ denote the weighted projective space with weights $w_1,\ldots,w_n$. That is, consider the action of $\bG_m$ on $S=\A^n$ given by $\alpha.(A_1,\ldots,A_n) = (\alpha^{w_1}A_1,\ldots,\alpha^{w_n}A_n).$ Then $\P(w_1,\ldots,w_n)$ is the quotient $S(F)/\bG_m(F)$, and we denote it by $P(F)$. Note that the usual projective space $\P^m$ is simply $\P(1,1,\ldots,1)$ (with $m+1$ $1$'s) under this definition. Given any $(A_1,\ldots,A_n)\in S(F)$, let $I$ denote the $\cO$-ideal
$$I := \{\alpha\in F:\alpha.(A_1,\ldots,A_n)\in S(\cO)\};$$
we define the height $H$ on $S(F)$ to be
\begin{equation}\label{eq:heightweighted}
H(A_1,\ldots,A_n) := (NI)\prod_{\fP\in M_\infty} \max(|A_1|_\fP^{1/w_1},\ldots,|A_n|_\fP^{1/w_n}).
\end{equation}
The product formula implies that $H$ is invariant under the action of $\bG_m(F)$ and hence descends to a height function on $P(F)$, which we also denote by $H$. We extend the height function $H$ naturally to $S(F_\infty)$ in the natural way; that is, for any $(A_{1,\fP},\ldots,A_{n,\fP})_{\fP\in M_\infty}\in S(F_\infty)$,  define its height to be
$$H((A_{1,\fP},\ldots,A_{n,\fP})_{\fP\in M_\infty}) := \prod_{\fP\in M_\infty}H_\fP(A_{1,\fP},\ldots,A_{n,\fP})$$
where $$H_\fP(A_{1,\fP},\ldots,A_{n,\fP}):=\max(|A_1|^{1/w_1}_\fP,\ldots,|A_n|^{1/w_n}_\fP).$$

The goal of this section is to carry out the following construction.

\begin{proposition}\label{prop:Sigma}
    There exists a fundamental domain $\Sigma$ for the action of $\bG_m(F)$ on $S(F)$ of the form 
    $$\Sigma = S(F)\cap\left(\prod_{\fP\notin M_\infty}\Sigma_\fP \times \Sigma_\infty\right),$$
    where the intersection is taken in $S(\A)$ and where
    \begin{enumerate}
        \item $\Sigma_\fP\subset S(\cO_\fP)$ is open for $\fP\notin M_\infty$, all but finitely many of which equal $S(\cO_\fP)$;
        \item $\Sigma_\infty\subset S(F_\infty)$ is measurable such that for any $\fP,\fP'\in M_\infty$ and any $(A_{1,\fP},\ldots,A_{n,\fP})_{\fP\in M_\infty}\in\Sigma_0$,
        $$1\ll\frac{H_\fP(A_{1,\fP},\ldots,A_{n,\fP})^{1/\deg \fP}}{H_\fP(A_{1,\fP'},\ldots,A_{n,\fP'})^{1/\deg \fP'}}\ll 1,$$
        where the implied constants depend only on $F$ and $M_\infty$.
    \end{enumerate}
    As a consequence, for any $\lambda >0$, the set $\{f\in \Sigma_\infty\colon H(f)\leq \lambda\}$ is compact.
\end{proposition}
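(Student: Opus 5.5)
The plan is to build $\Sigma$ in two stages: first fix the ideal class of the scaling ideal $I$ by a finite-place condition, then use a fundamental domain for the unit group acting on $F_\infty^\times$ to balance the archimedean components. Throughout, we identify $\bG_m(F)=F^\times$.

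\textbf{Finite places.} Choose representatives $\mathfrak a_1,\ldots,\mathfrak a_h$ of the class group of $\cO$, taken to be integral ideals whose prime divisors lie in a fixed finite set $T$ of primes. For $x\in S(F)\setminus\{0\}$, the ideal $I(x)$ is a fractional ideal. Replacing $x$ by $\alpha.x$ multiplies $I$ by $\alpha^{-1}$, so after scaling we may arrange that $I(x)^{-1}=\mathfrak a_i$ for a unique $i$. The scalars preserving this normalization are exactly the units $\cO^\times$.

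We need to express the condition ``$I(x)^{-1}=\mathfrak a_i$ for some $i$'' locally. It says that for every $\fP\notin M_\infty$,
$$\min_j\left\lfloor \fP(A_j)/w_j\right\rfloor=-\fP(\mathfrak a_i),$$
where $\fP(\cdot)$ denotes the $\fP$-adic valuation. For $\fP\notin T$ this forces $x\in S(\cO_\fP)$ with $x\notin \fP\cdot S(\cO_\fP)$ in the weighted sense, so $\Sigma_\fP$ would be a weighted-primitive subset of $S(\cO_\fP)$ rather than all of $S(\cO_\fP)$.

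To obtain item~1 as stated, with $\Sigma_\fP=S(\cO_\fP)$ for almost all $\fP$, we should instead not impose primitivity away from $T$. The plan is to allow $I(x)^{-1}$ to be any integral ideal in the class of $\mathfrak a_i$ with the $T$-part prescribed. Concretely, we normalize $x$ so that it is integral everywhere and its weighted valuation at each $\fP\in T$ equals that of $\mathfrak a_i$. Since these conditions pick a unique representative up to units only if the class group is handled correctly, it is simplest to choose $T$ to contain generators of the class group, and to note the following. Two integral representatives $x$ and $\alpha.x$, both satisfying the prescribed $T$-valuations, have $\alpha$ integral away from $T$ and with trivial valuation at $T$. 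Comparing $I$ shows $\alpha^{-1}$ is also integral, so $\alpha\in\cO^\times$.

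Existence of such a representative follows from the ability to scale by the $S$-units and ideal classes. Taking $\Sigma_\fP$ for $\fP\in T$ to be the union over $i$ of the open sets with prescribed weighted valuation disjoint across $i$, we see that the set $\prod_\fP\Sigma_\fP\cap S(F)$ meets each $F^\times$-orbit in exactly one $\cO^\times$-orbit.

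\textbf{Infinite places.} It remains to choose $\Sigma_\infty$, a fundamental domain for $\cO^\times$ acting on $S(F_\infty)$. Consider the map
$$L\colon S(F_\infty)\to\R^{M_\infty},\qquad x\mapsto \bigl(\log H_\fP(x_\fP)/\deg\fP\bigr)_{\fP\in M_\infty}.$$
A unit $u$ acts on $L(x)$ by translation by $\bigl(\log|u|_\fP/\deg\fP\bigr)_{\fP}$. By Dirichlet's unit theorem (respectively its function field analogue), these translation vectors form a lattice $\Lambda$ in the trace-zero hyperplane. Pick a bounded fundamental domain $P$ for $\Lambda$ in that hyperplane, and take $\Sigma_\infty$ to be $L^{-1}(P+\R\cdot\mathbf 1)$ intersected with a fundamental domain for the finite torsion group $\mu(F)$. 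This torsion group acts on each $S(F_\fP)$ with finite stabilizers, so a measurable fundamental domain for it exists.

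Because $P$ is bounded, the coordinates of $L(x)$ differ pairwise by $O(1)$. This is exactly item~2.

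\textbf{Compactness.} If $H(x)=\prod_\fP H_\fP(x_\fP)\le\lambda$ and all the ratios in item~2 are bounded, then each $H_\fP(x_\fP)\ll\lambda^{\deg\fP/d_F}$. The set of $x_\fP$ with bounded $H_\fP$ is a compact polydisc-like set, so $\{f\in\Sigma_\infty: H(f)\le\lambda\}$ lies in a compact set. Its closure is compact, and it is itself compact if we take $P$ closed and accept boundary overlaps of measure zero.

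\textbf{Main obstacle.} The main obstacle is the bookkeeping in the finite-place step: reconciling the class-group normalization with the requirement that $\Sigma_\fP=S(\cO_\fP)$ for almost all $\fP$. The way through is to encode the ideal class entirely at the finite set $T$. The key fact is that the class group is generated by primes in $T$, so that $I(x)$ can be adjusted using $T$-supported ideals alone.
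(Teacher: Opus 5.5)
The archimedean step is fine. It is essentially the paper's construction in logarithmic coordinates: the paper takes the balanced locus $S(1)$ modulo roots of unity and translates it by a compact $\Lambda\subset F_\infty^1$ with $F_\infty^1=\Lambda\cdot\cO^\times$. One small correction: the vectors $(\log|u|_\fP/\deg\fP)_\fP$ lie in the hyperplane $\sum_\fP\deg\fP\,y_\fP=0$, not $\sum_\fP y_\fP=0$. This is harmless, since $\mathbf 1$ is still transverse to it.

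\textbf{The gap is in the finite-place step.} You first write down the right normalization, $I(x)^{-1}=\mathfrak a_i$, which is weighted primitivity at every $\fP\notin T$. You then drop primitivity to get $\Sigma_\fP=S(\cO_\fP)$ for $\fP\notin T$, and the resulting set is no longer a fundamental domain.

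Your uniqueness argument fails at ``$\alpha$ is integral away from $T$''. If $x_\fP$ is divisible by $\fP$ in the weighted sense, $\alpha$ can have negative valuation at $\fP$ while $\alpha.x$ stays integral. Even for $\alpha\in\cO$ prime to $T$, comparing ideals only gives $\alpha\in I(x)\supseteq\cO$, not $\alpha\in\cO^\times$.

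Concretely, take $F=\Q$ with weights $(4,6)$. The class group is trivial, $-1$ acts trivially, your $\Sigma_\infty$ is $S(\R)\smallsetminus\{0\}$, and your $\Sigma$ is $S(\Z)\smallsetminus\{0\}$. This contains both $(1,1)$ and $2.(1,1)=(16,64)$.

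No fix of this kind can work. Over a number field, suppose $\Sigma_\fP=S(\cO_\fP)$ outside a finite $T$ and $\Sigma_\infty$ is stable under positive real weighted scaling, as a homogeneously expanding region must be. Then $x\in\Sigma$ forces $\alpha.x\in\Sigma$ for every rational integer $\alpha>1$ that is $\fP$-adically close enough to $1$ at the primes of $T$.

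\textbf{What the paper does.} It keeps exactly your first condition. Set $S'(\cO_\fP)=\{(A_1,\ldots,A_n)\in S(\cO_\fP):\nu_\fP(A_i)<w_i\text{ for some }i\}$. The key local fact is that an element of $F_\fP^\times$ carrying a point of $S'(\cO_\fP)$ into $S'(\cO_\fP)$ must lie in $\cO_\fP^\times$. This is false for $S(\cO_\fP)$, which is precisely where your argument breaks.

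Fix representatives $\gamma$ of $\cl(\bG_m)$ with $\nu_\fP(\gamma_\fP)\le 0$, and set $S_\gamma=S(F)\cap\gamma^{-1}\prod_\fP S'(\cO_\fP)$. Each $F^\times$-orbit meets exactly one $S_\gamma$, in exactly one $\cO^\times$-orbit. One then takes $\Sigma_\fP=\gamma_\fP^{-1}S'(\cO_\fP)$.

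Two consequences follow:
\begin{itemize}
\item For almost all $\fP$ the local set is $S'(\cO_\fP)$, not $S(\cO_\fP)$. The paper's own proof produces this, so item 1 has to be read that way; you were misled by its literal wording.
\item $\Sigma$ is a finite disjoint union over $\gamma$ of product-form sets, not a single product.
\end{itemize}

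Taking the union over classes outside the product also repairs a second defect in your write-up. At the primes of $T$ you take a union over $i$ place by place and then a product. That admits mixed valuation patterns whose classes are already represented. It also means the local pieces for different $i$ need not be disjoint, since two $\mathfrak a_i$ can have the same valuation at some $\fP\in T$.
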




Let $\fP$ be a finite prime of $F$, and let $\nu_\fP$ denote the corresponding valuation. We define $$S'(\cO_\fP) = \{(A_1,\ldots,A_n)\in S(\cO_\fP):\nu_\fP(A_i) < w_i \mbox{ for some }i=1,\ldots,n\}.$$
Then for any $v \in S(F_\fP)$, there exists $g_\fP\in \bG_m(F_\fP)$ such that $g_\fP.v \in S'(\cO_\fP).$ Moreover, if $g\in \bG_m(F_\fP)$ sends an element of $S'(\cO_\fP)$ to $S'(\cO_\fP)$, then $g\in \bG_m(\cO_\fP).$ Let $$\cl(\bG_m)=\big(\prod_{\fP\notin M_\infty} \bG_m(\cO_\fP)\big)\backslash \bG_m(\A_f)/\bG_m(F)$$
denote the class group of $\bG_m$ over $F$. Then $\cl(\bG_m)$ is isomorphic to the class group of $F$, and hence is finite. For any $\gamma\in\cl(\bG_m)$, define
\begin{eqnarray}
\label{eq:S_gamma} S_\gamma &=& S(F)\cap \gamma^{-1}\big(\prod_{\fP\notin M_\infty} S'(\cO_\fP)\big),
\end{eqnarray}
where the intersection is in $S(\A_f)$. We fix representatives $\gamma=(\gamma_v)_{v\in M}\in \bG_m(\A_f)$ for each element in $\cl(\bG_m)$. Multiplying by elements of $\bG_m(F)$, if necessary, we may assume that for any $\gamma$, we have $\nu_\fP(\gamma_\fP)\leq 0$ for every $\fP\notin M_\infty$. Then $S_\gamma$ lies in the image of $S(\cO)$ in $S(\A_f)$, and henceforth, we identify $S_\gamma$ with its preimage in $S(\cO)$. Note that since $S(F)$ and $S'(\cO_\fP)$ are invariant under the action of $\bG_m(\cO)=\cO^\times$ for every $\fP$, the set $S_\gamma$ is also invariant under the action of $\cO^\times$. We have the following lemma:
\begin{lemma}
There is a bijection
$$P(F) = \bG_m(F)\backslash S(F) \longleftrightarrow \bigsqcup_\gamma \cO^\times\backslash S_\gamma,$$
where the union goes over our fixed set of representatives $\gamma$ for $\cl(\bG_m)$.
\end{lemma}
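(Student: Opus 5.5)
We construct the map from right to left and show it is a bijection. Send the class of $v\in S_\gamma$ (modulo $\cO^\times$) to its $\bG_m(F)$-orbit in $S(F)$. This is well defined because $\cO^\times\subset\bG_m(F)$. The content of the lemma is two claims: every $\bG_m(F)$-orbit meets $S_\gamma$ for exactly one representative $\gamma$, and within that $S_\gamma$ the orbit is a single $\cO^\times$-orbit.

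\emph{Surjectivity.} Let $v\in S(F)$. For each finite $\fP$, the remark preceding the lemma gives $g_\fP\in\bG_m(F_\fP)$ with $g_\fP.v\in S'(\cO_\fP)$. For all but finitely many $\fP$ we have $v\in S(\cO_\fP)$ and some coordinate is a unit, so $v\in S'(\cO_\fP)$ already and we may take $g_\fP=1$. Thus $g=(g_\fP)\in\bG_m(\A_f)$. Its class in $\cl(\bG_m)$ equals that of a fixed representative $\gamma$, so we can write
$$g=u\gamma\alpha,\qquad u\in\prod_{\fP\notin M_\infty}\bG_m(\cO_\fP),\quad \alpha\in\bG_m(F).$$
Then $\gamma\alpha.v=u^{-1}g.v\in\prod_\fP S'(\cO_\fP)$, because each $S'(\cO_\fP)$ is stable under $\bG_m(\cO_\fP)$; this holds since $\nu_\fP(u^{w_i}A_i)=\nu_\fP(A_i)$. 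Hence $\alpha.v\in S_\gamma$.

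\emph{Injectivity.} Suppose $v\in S_\gamma$ and $v'\in S_{\gamma'}$ with $v'=\alpha.v$ for some $\alpha\in\bG_m(F)$. Then $\gamma'\alpha\gamma^{-1}$ carries $\gamma.v\in\prod S'(\cO_\fP)$ into $\prod S'(\cO_\fP)$. We need $\gamma.v\neq0$ componentwise; this holds because a point of $S'(\cO_\fP)$ has some $\nu_\fP(A_i)<w_i$, so it is nonzero. Applying the uniqueness remark (an element of $\bG_m(F_\fP)$ mapping a point of $S'(\cO_\fP)$ into $S'(\cO_\fP)$ lies in $\bG_m(\cO_\fP)$) place by place, we get $\gamma'\alpha\gamma^{-1}\in\prod_\fP\bG_m(\cO_\fP)$. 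So $\gamma$ and $\gamma'$ define the same class in $\cl(\bG_m)$ (the group is commutative), and since they are fixed representatives, $\gamma=\gamma'$. It follows that $\alpha\in\bG_m(F)\cap\prod_\fP\bG_m(\cO_\fP)=\cO^\times$.

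The only point requiring care, though it is straightforward, is the uniqueness statement for $S'(\cO_\fP)$, which the text asserts. It can be checked directly. If $\nu_\fP(g)=k>0$, then every coordinate of $g.v$ has valuation at least $w_i$, so $g.v\notin S'(\cO_\fP)$. If $k<0$, then the coordinate $i$ with $\nu_\fP(A_i)<w_i$ satisfies $\nu_\fP(g^{w_i}A_i)<0$, so $g.v$ is not integral. We should also note that the identification of $S_\gamma$ with its preimage in $S(\cO)$ is harmless, because $S(F)\to S(\A_f)$ is injective. This completes the proof plan.
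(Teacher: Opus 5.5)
Your proof is correct and follows the paper's argument: the same factorization $(g_\fP)=u\gamma\alpha$ in $\bG_m(\A_f)$ produces a point of $S_\gamma$ in each orbit, and the same uniqueness property of $S'(\cO_\fP)$ forces $\gamma=\gamma'$. The differences are cosmetic: you define the map from $\bigsqcup_\gamma\cO^\times\backslash S_\gamma$ to $P(F)$ (so well-definedness is automatic), and you fill in details the paper leaves implicit, namely that $(g_\fP)$ is a genuine finite adele, a direct check of the uniqueness claim for $S'(\cO_\fP)$, and the final step $\alpha\in\bG_m(F)\cap\prod_\fP\bG_m(\cO_\fP)=\cO^\times$ (both arguments tacitly take $v\neq 0$).
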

\begin{proof}
First we define the map. Let $v$ be an element of $S(F)$. For all $\fP\not\in M_\infty$, there exist elements $\gamma_\fP\in\bG_m(F_\fP)$, unique up to $\bG_m(\cO_\fP)$, such that $\gamma_\fP\cdot v\in S'(\cO_\fP)$.
Write the element $(\gamma_\fP)_{\fP\not\in M_\infty}\in \bG_m(\A_f)$ as $\gamma_i\gamma\gamma_F$, where $\gamma_i\in \prod_{\fP\not\in M_\infty}\bG_m(\cO_\fP)$, $\gamma$ is one of our fixed representatives for $\cl(\bG_m)$, and $\gamma_F\in \bG_m(F)$. It immediately follows that $\gamma_F v$ belongs to $S_\gamma$. Define our map by sending $v$ to the $\cO^\times$-orbit of $\gamma_F v$. This map is clearly well defined, and surjectivity follows by noting that elements in $S_\gamma$ map to themselves. It remains to check injectivity. To this end, note that if $v\in S_\gamma$ and $g v\in S_{\gamma'}$ for some $g\in \bG_m(F)$, then the elements $\gamma_\fP v$ and $\gamma_\fP' g v$ belong to $S'(\cO_\fP)$ for all $\fP\not\in M_\infty$. Hence $\gamma_\fP' g\gamma_\fP^{-1}\in \bG_m(\cO_\fP)$, implying that $\gamma$ and $\gamma'$ represent the same class in $\cl(\bG_m)$.
\end{proof}

It suffices to consider each $\gamma\in\cl(\bG_m)$ separately. We now fix some $\gamma$. If $(A_1,\ldots,A_n)\in S_\gamma$, then $$H(A_1,\ldots,A_n) = \prod_{\fP\in M_\infty}\max(|A_1|_\fP^{1/w_1},\ldots,|A_n|_\fP^{1/w_n}).$$ We construct a fundamental domain $\Sigma$ for the action of $\cO^\times$ on $S_\gamma$ by taking $\Sigma_\fP = \gamma_\fP^{-1}S'(\cO_\fP)$ for $\fP\notin M_\infty$ and by constructing $\Sigma_\infty$ as a fundamental domain for the action of $\cO^\times$ on $S(F_\infty)$ such that for any $(A_{1,\fP},\ldots,A_{n,\fP})_{\fP\in M_\infty}\in \Sigma_\infty,$ the local heights $H_\fP(A_{1,\fP},\ldots,A_{n,\fP})^{1/\deg\fP}$ differ from each other by at most an absolute constant. 
We treat the number field and the function field cases separately due to the valuation set $|F_\infty|$ being discrete for the latter. 


Suppose first that $F$ is a number field. Let $F_\infty^1$ denote the subset of $F_\infty$ consisting of elements $(\alpha_\fP)$ such that $\prod_{\fP\in M_\infty} |\alpha_\fP|_\fP=1$. Let $\Lambda$ denote a compact subset of $F_\infty^1$ such that $F_\infty^1=\Lambda.\cO^\times.$ Fix any $(A_{1,\fP},\ldots,A_{n,\fP})_{\fP\in M_\infty}\in S(F_\infty).$ Let
$$H'((A_{1,\fP},\ldots,A_{n,\fP})_{\fP\in M_\infty}) = \prod_{\fP\in M_\infty}H_\fP(A_{1,\fP},\ldots,A_{n,\fP})^{1/\deg\fP}.$$
For any $\fP\in M_\infty$, let $\alpha_\fP$ be an element of $F_\fP$ such that
\begin{equation*}
 |\alpha_\fP|_\fP = \frac{H_\fP(A_{1,\fP},\ldots,A_{n,\fP})}{H'((A_{1,\fP},\ldots,A_{n,\fP})_{\fP\in M_\infty})^{\deg\fP/|M_\infty|}}.
\end{equation*}
Then $(\alpha_\fP)_{\fP\in M_\infty}\in F_\infty^1$ and for any $\fP\in M_\infty$, we have $$\left(H_\fP(\alpha_\fP^{-1}.(A_{1,\fP},\ldots,A_{n,\fP}))\right)^{1/\deg\fP} = H'((A_{1,\fP},\ldots,A_{n,\fP})_{\fP\in M_\infty})^{1/|M_\infty|}.$$ 
Let $S(1)$ denote the set of elements $(A_{1,\fP},\ldots,A_{n,\fP})_{\fP\in M_\infty}\in S(F_\infty)$ such that $$H_\fP(A_{1,\fP},\ldots,A_{n,\fP})^{1/\deg\fP}=H_\fP(A_{1,\fP'},\ldots,A_{n,\fP'})^{1/\deg\fP'}\qquad\mbox{for all }\fP,\fP'\in M_\infty.$$
Then  $$S(F_\infty)= F_\infty^1.S(1).$$
Now if $\alpha\in \cO^\times$ sends an element of $S(1)$ back to $S(1)$, then $$|\alpha|_\fP^{1/\deg\fP} = |\alpha|_{\fP'}^{1/\deg\fP'}\mbox{ for all }\fP,\fP'\in M_\infty\mbox{ and }\prod_{\fP\in M_\infty}|\alpha|_\fP = 1,$$ from which we see that $|\alpha|_\fP = 1$ for all $\fP\in M_\infty$ and hence $\alpha$ is a root of unity. Let $\mu_\infty$ be the finite group of roots of unity contained in $F$ and let $\mu_\infty\backslash S(1)$ be a measurable set whose boundary has measure $0$ that is a fundamental domain for the action of the finite group $\mu_\infty$ on $S(1)$. We take $\Sigma_\infty = \Lambda.(\mu_\infty\backslash S(1))$.

Suppose next that $F$ is a function field. For any $\fP\in M_\infty$, let $c_\fP $ be the positive absolute constant so that for any $r\in \R^+$, the intersection $[r/c_\fP ,rc_\fP )\cap |F_v|_\fP$ is a singleton. For example, if $F=\F_q(T)$, then $c_{1/T} = \sqrt{q}$. We write $c_0$ for the product $\prod_{\fP\in M_\infty} c_\fP $. Let $F_\infty^{\sim1}$ denote the set of $(a_\fP)_{\fP\in M_\infty}\in F_\infty$ such that $c_0^{-1} \leq \prod |a_\fP|_\fP < c_0$. Let $\Lambda'$ denote a finite union of translates of $\Lambda$ such that $F_\infty^{\sim1}=\Lambda'.\cO^\times.$ Fix any $(A_{1,\fP},\ldots,A_{n,\fP})_{\fP\in M_\infty}\in S(F_\infty).$ For any $\fP\in M_\infty$, let $\alpha_\fP\in F_\fP$ be an element such that $$\frac{1}{c_\fP }\leq |\alpha_\fP|_\fP  \frac{(H'((A_{1,\fP},\ldots,A_{n,\fP})_{\fP\in M_\infty}))^{\deg\fP/|M_\infty|}}{H_\fP(A_{1,\fP},\ldots,A_{n,\fP})}< c_\fP .$$
In contrast to the number field case, we have $(\alpha_\fP)_{\fP\in M_\infty}\in (F_\infty)^{\sim 1}$ and for any $\fP\in M_\infty$, we have $$c_\fP^{-1/\deg\fP}\leq \frac{H_\fP(\alpha_\fP^{-1}.(A_{1,\fP},\ldots,A_{n,\fP}))^{1/\deg\fP}}{H'((A_{1,\fP},\ldots,A_{n,\fP})_{\fP\in M_\infty})^{1/|M_\infty|}} \leq c_\fP^{1/\deg\fP} .$$ 
Let $S(\sim\!\! 1)$ denote the set of elements $(A_{1,\fP},\ldots,A_{n,\fP})_{\fP\in M_\infty}\in S(F_\infty)$ such that $$c_\fP^{-1/\deg\fP} c_{\fP'}^{-1/\deg\fP'}\leq\frac{H_\fP(A_{1,\fP},\ldots,A_{n,\fP})^{1/\deg\fP}}{H_{\fP'}(A_{1,\fP'},\ldots,A_{n,\fP'})^{1/\deg\fP'}}\leq c_\fP^{1/\deg\fP} c_{\fP'}^{1/\deg\fP'}\qquad\mbox{for all }\fP,\fP'\in M_\infty.$$
Then  $$S(F_\infty) = F_\infty^{\sim 1}.S(\sim\!\! 1).$$
The subset $\mu_\infty'$ of $\cO^\times$ consisting of elements $\alpha$ such that $\alpha.S(\sim\!\! 1)\cap S(\sim\!\! 1)\neq\emptyset$ is a finite set as it is a finite union of translates of $\mu_\infty$. Two elements of $S(\sim\!\! 1)$ are called \textit{equivalent} if there exists an element of $\mu_\infty'$ sending one to the other. Let $\mu_\infty'\backslash S(\sim\!\! 1)$ denote a measurable set consisting of one representative from each equivalence class and take $\Sigma_\infty = \Lambda'.(\mu_\infty'\backslash S(\sim\!\! 1))$.

The proof of Proposition \ref{prop:Sigma} is now complete.

\begin{remark}\label{rem:S1}
    In the definition of $S(1)$ above, we may choose to have
    $$H_\fP(A_{1,\fP},\ldots,A_{n,\fP})^{n_\fP}$$
    be equal across $\fP\in M_\infty$, where the $n_\fP$'s are some fixed positive real numbers. Our choice of $n_\fP = 1/\deg\fP$ gives an optimal power-saving error term, so that when identifying $F_\infty^n = \R^{n[F:\Q]}$, the coordinates of elements in $\Sigma_\infty$ are bounded multiplicatively from each other by $O(1)$.
\end{remark}

\subsection{Counting lattice points over $F_\infty$}

In this section, we collect some results on geometry-of-numbers over global fields. We have the following version of Davenport's lemma over global fields (\cite[Proposition 4.4 and Proposition A.4]{BSWfield}).

\begin{proposition}\label{prop:Davenport}
 If $F$ is a number field, let $E = \R$ and let $B$ be an open bounded and
semialgebraic subset of $E^n$. If $F$ is a function field, let $E = F_\infty$ and let $B$ be an open compact subset of $E^n$. Let $K$ be an open compact subset of $\GL_n(E)$. Let $c$ be a positive real constant. Then for any $g\in K$ and for any $t=
 \diag(t_1,\ldots,t_n)\in\GL_n(E)$ such that for any $i=1,\ldots,n$, we have $|t_i|_w^{1/\deg w}/|t_i|_{w'}^{1/\deg w'}<c$ for any $w,w'\in M_\infty$,
 \begin{equation}\label{eq:skew}
 \#\{tgB \cap \cO^n\} = \Vol_{\cO^n}(tgB) + O(\Vol(\proj(tgB))),
 \end{equation}
 where $\Vol_{\cO^n}$ is normalized such that
$E^n/\cO^n$ has volume $1$, and $\Vol(\proj(tgB))$ denotes the greatest $d$-dimensional volume of any projection of $tgB$ onto a coordinate subspace obtained by equating $n-d$ coordinates to zero, where $d$ takes all values from $1$ to $n-1$. The implied constant in the second summand depends only on $F$, $B$, $K$ and $c$.
\end{proposition}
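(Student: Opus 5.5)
The plan is to reduce to the classical Davenport lemma over $\R$ in the number field case, and to a direct counting argument in the function field case, keeping track of the skewing torus $t$ by means of the balancing hypothesis on the $t_i$.

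\textbf{Number field case.} Fix a $\Z$-basis of $\cO$. This identifies $F_\infty=\prod_{w\in M_\infty}F_w$ with $\R^{d_F}$ and $\cO^n$ with a lattice $L\subset\R^{nd_F}$. After a fixed linear change of variables, $L$ becomes $\Z^{nd_F}$, and this only changes the implied constants.

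The group element $g\in K\subset\GL_n(E)$ acts on $E^n$. The hypothesis gives $|t_i|_w^{1/\deg w}\asymp T_i$ for a single real number $T_i$, uniformly in $w$. Hence $t$ factors as $t=T\,u$, where $T=\diag(T_1,\ldots,T_n)$ acts by real scalars in each block and $u$ lies in a fixed compact set $U\subset\GL_n(F_\infty)$. After the identification with $\R^{nd_F}$, $u$ is a block-diagonal real matrix of bounded norm and bounded inverse norm.

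So $tgB=T\,(ugB)$. The sets $ugB$ all lie in a fixed bounded region, and they form a semialgebraic family parametrized by the compact set $U\times K$ with bounded complexity. The region $T(ugB)$ is then a semialgebraic set in $\R^{nd_F}$ defined by a bounded number of polynomial inequalities of bounded degree.

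Davenport's lemma over $\R$ then gives
\[
\#\{tgB\cap\Z^{nd_F}\}=\Vol(tgB)+O(\max \Vol(\text{projections})),
\]
with an implied constant depending only on the degree and number of defining inequalities. The change of lattice from $\Z^{nd_F}$ to $L$ converts $\Vol$ into $\Vol_{\cO^n}$.

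The one subtle point is that the error term must be expressed through projections onto coordinate subspaces of $E^n$ (for $n$ coordinates over $F$), not of $\R^{nd_F}$. Since $T$ scales each block of $d_F$ real coordinates by the same $T_i$, any real coordinate projection has volume bounded by a constant times a product of the $T_i$ over a subset of blocks. This is in turn bounded by $\Vol(\proj(tgB))$ in the paper's sense, up to constants depending on $B$, $K$ and $U$. Concretely, a projection keeping a proper nonempty subset of real coordinates has volume $\ll\prod_i T_i^{k_i}$ with $0\le k_i\le d_F$. I will bound this by the largest product of full blocks $\prod_{i\in J}T_i^{d_F}$ over proper $J$, possibly including a full-$E^n$ factor when the $T_i$ are large. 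I expect this comparison to be the main obstacle: the mixed partial-block projections must be dominated by full-block ones. I would first try to handle it by noting that each $T_i$ is either $\ge1$ or $<1$, and taking $J$ to be the set of indices with $T_i\ge 1$ (adjusting if $J$ is everything).

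\textbf{Function field case.} Here $E=F_\infty$ and $B$ is open compact, so $gB$ is open compact and, by compactness of $K$, contains and is contained in finite unions of translates of fixed small and large lattices. Set $\Lambda_0=\prod_w\mathfrak m_w^{N}$-boxes for suitable $N$. Then $tgB$ is a finite union of translates of $t\Lambda_0^n$, a product of balls of radii governed by $|t_i|_w$. Because the valuations are discrete, the hypothesis says $t$ is within bounded distance of a scalar-per-coordinate torus.

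Count $\cO^n$-points coordinate by coordinate, using Riemann–Roch. For a ball $\{x\in F_\infty:|x-a|_w\le r_w\}$, the number of points of $\cO$ in it equals $\Vol/\Vol(F_\infty/\cO)$ exactly once $\prod r_w$ exceeds a constant depending on the genus. Otherwise it is $O(1)$, namely at most $q^{h^0}$ for bounded $h^0$, and the balancing hypothesis keeps the degrees comparable across $w$.

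Multiplying over coordinates gives the main term $\Vol_{\cO^n}(tgB)$ when all coordinates are large. Otherwise, the count is bounded by the product over the large coordinates, which is the volume of a projection, yielding the stated error with constants depending on $F$, $B$, $K$ and $c$.
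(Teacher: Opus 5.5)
\textbf{Where the argument fails.} The gap is in the number field case, at exactly the step you flagged. The comparison you want cannot be proved, because it is false.

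Suppose all $T_i\ge 1$. Drop a single real coordinate from the block with the smallest $T_{i_0}$. The resulting projection has volume $\asymp \prod_i T_i^{d_F}/T_{i_0}$. The largest projection onto a proper set of full blocks has volume only $\asymp\prod_iT_i^{d_F}/T_{i_0}^{d_F}$. The ratio $T_{i_0}^{d_F-1}$ is unbounded. Your fix of ``adjusting'' by allowing $J=\{1,\ldots,n\}$ gives an error as large as the main term, so it does not help.

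This is not an artifact of routing through Davenport's lemma over $\R$. Take $F=\Q(i)$, $n=2$, $B=\{(x_1+iy_1,x_2+iy_2):0<x_j,y_j<1\}$, $g=1$, and $t=\diag(T,T)$ with $T\in\Z$. Then
$\#(tgB\cap\cO^2)=(T-1)^4=T^4-4T^3+O(T^2)$.
Every projection onto a proper $F_\infty$-coordinate subspace has volume $\asymp T^2$. For $n=1$ there is no proper projection at all, yet the error is $\asymp T$. So if the projections are onto $F$-coordinate subspaces, the statement is false whenever $d_F\ge 2$.

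\textbf{The correct reading.} In the number field case the statement must be read with $E=\R$, as the paper writes it. The coordinates are then the $nd_F$ real coordinates of $F_\infty^n\cong\R^{nd_F}$, say with respect to a $\Z$-basis of $\cO$. Under that reading your reduction already completes the proof:
\begin{itemize}
\item $tgB$ is the image of $B$ under an $\R$-linear map, so it is cut out by the same number of inequalities of the same degrees, and Davenport's lemma applies directly.
\item Your ``subtle point'' disappears.
\item The balancing hypothesis on $t$ is used only to bound the error by $\max\prod_iT_i^{k_i}$, taken over proper choices of $0\le k_i\le d_F$.
\end{itemize}
This is all the paper needs later. The bound is $\ll \Vol(tgB)/\min_iT_i$ when all $T_i\ge1$, and $\ll\prod_i\max(1,T_i)^{d_F}$ in the cuspidal estimates.

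\textbf{Function field case and comparison with the paper.} The paper gives no argument for this proposition; it imports it from \cite[Propositions 4.4 and A.4]{BSWfield}. Your function field sketch is sound. You write $tgB$ as a uniformly bounded union of translates of a product of balls $t\Lambda_0^n$, then apply Riemann--Roch coordinatewise. This gives an exact count once the relevant divisor has degree $>2g-2$, and a bounded count otherwise.

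Two small remarks on it:
\begin{itemize}
\item The balancing hypothesis plays no role there, since the count in a ball depends only on the degree of the associated divisor.
\item When every coordinate is small, the error is $O(1)$ rather than a projection volume; for example, a tiny ball around $0$ contains one lattice point. So in both cases the error term should read $O(\max(\Vol(\proj(tgB)),1))$, as in Davenport's original lemma.
\end{itemize}
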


The next result states that counting with a mod $\fP$ condition follows as one ``expects''.
\begin{proposition}\label{prop:cong}
    Let $B$ be a compact region in $F_\infty^n$ and let $r\in \R$ if $F$ is a number field and $r\in F$ if $F$ is a function field. Let $Y$ be a closed subscheme of $\A_{\cO}^n$. Let $I$ be an ideal of $\cO$. Then
    $$\#\{a\in rB\cap \cO^n\colon a \,(\mathrm{mod}\,I) \in Y(\cO/I)\} = O\left(\Big(\frac{|r|^n}{(NI)^n}+1\Big)\#Y(\cO/I)\right).$$
    Here $|r| = \prod_{w\in M_\infty}|r|_w$ so that $\Vol(rB) = |r|^n\cdot\Vol(B).$ The implied constant depends only on $B$.
\end{proposition}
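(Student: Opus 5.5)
The idea is to fiber the count over residue classes modulo $I$ and count each class by a lattice-point argument. For each $y\in Y(\cO/I)$, fix a lift $a_y\in\cO^n$. The set being counted is then the disjoint union over $y$ of
$$rB\cap(a_y+I^n),$$
so it is enough to show, uniformly in $y$ and in the choice of lift,
$$\#\big(rB\cap(a_y+I^n)\big)\ll \frac{|r|^n}{(NI)^n}+1.$$
Summing this bound over the $\#Y(\cO/I)$ classes gives the claim. Nothing about $Y$ is used beyond the number of its points mod $I$.

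The uniform bound is a covering argument. First enlarge $B$ to a product of $n$ copies of a fixed compact region $B_0\subset F_\infty$ (for instance a box, or a ball at each place). This reduces everything to the following one-dimensional estimate: for any coset $c+I$ with $c\in\cO$,
$$\#\big(rB_0\cap(c+I)\big)\ll |r|/NI+1.$$
To prove it, take two points $x,x'$ of $I$ shifted by $c$ that both lie in $rB_0$. Their difference lies in $I\cap rB_1$, where $B_1=B_0-B_0$. So the count is at most
$$\#(I\cap rB_1).$$
Now $I$ is a lattice in $F_\infty$ of covolume $\asymp NI$. I would pick a fundamental domain $P$ for $I$ of diameter $O((NI)^{1/d_F})$ at each place, possibly after rescaling by a unit to balance the places as in Proposition~\ref{prop:Sigma}. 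Translates of $P$ by distinct lattice points are disjoint, which gives
$$\#(I\cap rB_1)\le \Vol(rB_1+P)/\Vol(P).$$
The right-hand side is $\ll (|r|+NI)/NI$ provided the shapes of $rB_1$ and $P$ are comparable.

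The main obstacle is exactly this uniformity. The constant must depend only on $B$, not on $I$ or $r$. For a general ideal $I$, a reduced basis can be badly skewed relative to $rB$, and for a scalar $r$ the box $rB$ is skew in the function field case, where $r\in F$ scales different places differently. I would handle this with Minkowski's second theorem: for any fractional ideal, all successive minima are $\asymp (NI)^{1/d_F}$, with constants depending only on $F$. This holds because multiplication by any nonzero element of $I$ is an injection into $I$, so the first minimum controls the others. Hence $I$ has a basis with balanced lengths, and a Lipschitz or Davenport-type count in the style of Proposition~\ref{prop:Davenport}, applied to the lattice $I$ in place of $\cO$, yields the bound.

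In the function field case one replaces Minkowski's theorem by Riemann--Roch. The number of elements of $I$ in a product of balls is an $h^0$ of a divisor, which is bounded by $\max(\deg+1-g,\,1)$ up to $O_F(1)$. This gives precisely $\ll |r|/NI+1$.
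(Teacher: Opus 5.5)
Your argument is correct, but it is organized differently from the paper's.

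\emph{The paper's route.} It does not fiber over residue classes or use lattice reduction. It covers $rB$ by product boxes $D=\prod_{w\in M_\infty}\prod_{i}D_{w,i}$ with $|a-b|_w<(NI)^{1/\#M_\infty}$ on each factor. By the product formula, two points of $\cO^n\cap D$ that are congruent modulo $I$ must coincide: the finite places contribute at most $(NI)^{-1}$ to $\prod_v|a_i-b_i|_v$, and the places in $M_\infty$ contribute less than $NI$. So each box contains at most $\#Y(\cO/I)$ relevant points, and a packing argument bounds the number of boxes by $O((|r|/NI)^n+1)$.

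\emph{Your route.} The paper bounds the number of points per box in each class by $1$ and then counts boxes. You instead bound the number of points in each class directly, by $\#(I\cap rB_1)^n$ after the difference-set trick.

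\emph{What each approach buys.} The paper's route is more elementary, since it uses only the product formula. Yours makes explicit the one delicate point, namely uniformity in $I$ and $r$. The paper's packing estimate $O(\Vol(rB)/\Vol(D')+1)$ is only clear when the boxes have comparable shape to $rB$, and this is not automatic. At a complex place the box has radius $(NI)^{1/(2\#M_\infty)}$, while at a real place it has length $(NI)^{1/\#M_\infty}$. In the function field case, $r\in F$ can have very different sizes at different places of $M_\infty$. Your two inputs give the uniform bound $\ll|r|/NI+1$ directly: the balanced successive minima of $I$ (the fact recorded in the Remark after the proposition), and Riemann--Roch, which only sees $\deg D$.

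Two small things to tidy. In the function field case the count is $q^{h^0(D)}$, not $h^0(D)$, and the clean input is $h^0(D)\le\max(\deg D+1,0)$. You should also state the final step $(|r|/NI+1)^n\ll(|r|/NI)^n+1$ explicitly.
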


\begin{proof}
    We cover $rB$ by cartesian products $D=\prod_{w\in M_\infty} \prod_{i=1}^n D_{w,i}$ of balls such that for any $a,b\in D_{w,i}$, we have $|a-b|_w < (NI)^{1/\#M_\infty}.$ If $(a_1,\ldots,a_n)$ and $(b_1,\ldots,b_n)\in \cO^n\cap D$ are such that $a_i$ and $b_i$ are congruent modulo $I$ for any $i=1,\ldots,n$, then
\begin{eqnarray*}
 |a_i-b_i|_\fP &\leq& (N\fP)^{-\nu_\fP(I)},\mbox{ for any }\fP\notin M_\infty\\
 |a_i-b_i|_w &<& (NI)^{1/\#M_\infty}, \mbox{ for any }w\in M_\infty.
\end{eqnarray*}
where $\nu_\fP(I)$ denote the $\fP$-adic valuation of the ideal $I$.
Hence $\prod_w |a_i-b_i|_w < 1$ which implies that $a_i=b_i$. That is, there is at most one element in $\cO^n\cap D$ for each congruence class modulo $I$. The number of points in $\cO^n\cap D$ whose reduction modulo $I$ is in $Y(\cO/I)$ is then bounded by $\#Y(\cO/I).$

We claim that $rB$ can be covered by $O((|r|/NI)^n+1)$ such $D$'s. To cover $rB$, we take a maximal filling of $rB$ by disjoint balls of the form $D'=\prod_{w\in M_\infty} \prod_{i=1}^n D'_{w,i}$ such that for any $a,b\in D'_{w,i}$, we have $|a-b|_w < \frac{1}{2}(NI)^{1/\#M_\infty}$. Expanding each $D'$ by a factor of $2$ in all coordinates gives a covering of $rB$. The number of balls needed is $O(\Vol(rB)/\Vol(D')+1)=O((|r|/NI)^n+1).$
\end{proof}

\begin{remark}
    Note that in the case $r\in \R$, the adelic norm $|r|$ equals the usual absolute value of $r$ to the power $[F:\Q]$. So the sizes of each coordinate of $a\in rB$ can be as small as $(N\fP)^{1/[F:\Q]}$ for the main term in Proposition \ref{prop:cong} to still dominate. This can also be explained using the fact that the successive minima of an ideal $I$ viewed as a lattice in $\R^{[F:\Q]}$ are bounded by each other multiplicatively by $O_F(1)$. To see this, let $\alpha\in I$ be the first element in a Minkowski basis of $I$. Let $\{\beta_1,\ldots,\beta_{[F:\Q]}\}$ be a Minkowski basis for $\cO$. Then for each $i = 1,\ldots,[F:\Q]$, we have $\|\alpha\beta_i\| =O_F( \|\alpha\|).$ From this we conclude that all the successive minima of $I$ are $O_F(\|\alpha\|)$.
\end{remark}

\section{A general framework for counting and sieving problems}\label{sec:general}


In this section, we present the main framework via which our counting and sieving results will be proved. Specifically, we present a list of axioms that we would like our representation $(G,V)$ to satisfy over our global field $F$. Finally, we present the main counting result implied by all these axioms. Unless specified otherwise, everything following an axiom assumes the validity of that axiom.

\medskip

\noindent Let $G$ be an algebraic group with a representation $V$ defined over $\cO$.

\vspace{10pt}
\noindent \textbf{AXIOM: \nicerep}. The representation $(G,V)$ satisfies the following conditions.
\begin{enumerate}
 \item $G$ is semisimple.
 \item The ring of polynomial invariants for the action of $G(\cO)$ on $V(\cO)$ is freely-generated by homogeneous polynomials $I_1,\ldots,I_k$ in the coordinates of $V$.
 \item The sum of the homogeneous degrees of $I_1,\ldots,I_k$ is equal to $\dim V$.
 \item The generic stabilizer is finite.
\end{enumerate}
\vspace{10pt}

Let $S$ denote the canonical quotient $V/\!\!/G$ defined as Spec of the ring of polynomial invariants over $\cO$. Then AXIOM: \nicerep~implies that $S$ is an affine space of dimension $$k=\dim\; S=\dim\; V - \dim\; G.$$ Let $\inv:V\rightarrow S$ denote the quotient map. Let $H:S(F)\rightarrow \R_{\geq0}$ denote a height function on $S$. Composing $H$ with $\inv$ gives a height function on $V(F)$ which we denote also by $H$. Let  $V(F)^{\gen}$ be a fixed $G(F)$-invariant subset of $V(F)$. Elements of $V(F)^\gen$ are called \emph{generic} and elements not in $V(F)^\gen$ are called \emph{non-generic}. Our goal is to obtain asymptotics for the number of generic $G(F)$-orbits on $V(F)$ satisfying certain congruence properties with integral invariants and bounded height.

For any positive integer $M$, we say that a function $\phi: F^M \rightarrow [0,1]$ is \emph{defined by congruence conditions} if there exist local functions $\phi_\fP:F^M_\fP\rightarrow [0,1]$ for every place $\fP$ of $F$ (including archimedean places) such that:
\begin{enumerate}
 \item For all $v\in F^M,$ the product $\prod_\fP \phi_\fP(v)$ converges to $\phi(v)$.
 \item For each $\fP$, the function $\phi_\fP$ is locally constant outside of some $\fP$-adically closed subset of $F_\fP^M$ of measure $0$.
\end{enumerate}
A subset of $F^M$ is said to be \emph{defined by congruence conditions} if its characteristic function is defined by congruence conditions.

\vspace{10pt}
\noindent \textbf{AXIOM: \localCondition}. Let $m_0=\prod_\fP m_{0,\fP}$ be a weight function on $V(F)$ defined by congruence conditions such that:
\begin{enumerate}
 \item $m_{0,\fP}$ is $G(F_\fP)$-invariant;
 \item For any $\fP\in M_\infty$, any $v\in V(F_\fP)$ and any $\lambda\in F_\fP^\times$, we have $m_{0,\fP}(v)\neq 0$ if and only if $m_{0,\fP}(\lambda v)\neq 0$;
 \item For any $v\in V(F)$ with $m_0(v)\neq0$, the stabilizer $\Stab_{G}(v)=\{g\in G:gv = v\}$ is a finite group scheme with absolutely bounded order and for every finite prime $\fP$, there exists $g_\fP\in G(F_\fP)$ such that $g_\fP v\in V(\cO_\fP)$;
 \item The product $\prod_{\fP\notin M_\infty}\lambda_\fP' = 0$, where $\lambda_\fP'$ is the $\fP$-adic density of $v\in V(F_\fP)$ with $\#\Stab_{G(F_\fP)}(v)\neq 1$ and $m_{0,\fP}(v)\neq 0$
\end{enumerate}
We write $V_P(F)$ for the subset of $V(F)$ consisting of elements $v$ with $m_0(v)\neq0$. Define $\Sigma := \inv(V_P(F)).$
\vspace{10pt}

For any subgroup $G_0$ of $G(F)$, any $G_0$-invariant subset $V_0$ of $V(F)$, and any $G_0$-invariant function $m:V_0\rightarrow[0,1]$, let $N_m(V_0,G_0,X)$ denote the number of generic $G_0$-orbits on $V_0$ of height bounded by $X$ when $F$ is a number field or equal to $X$ when $F$ is a function field, where each orbit $G_0v$ is weighted by $m(v)/(\#\Stab_{G_0}(v))$. If $m$ is identically $1$ on $V_0$, we write $N(V_0,G_0,X)$ instead. 
We are interested in $N_{m_0}(V(F),G(F),X)$. In most applications, $m_0$ will be the characteristic function of a subset of $V(F)$ defined by congruence conditions. Condition 4 of AXIOM: \localCondition~implies that the extra weighting by $1/\#\Stab_{G_0}(v)$ does not affect the asymptotics when $G_0$ is commensurable with $G(\cO)$.

We first reduce the evaluation of $N_m(V_0,G_0,X)$ to counting integral orbits. Let $$\cl(G)=\Big(\prod_{\fP\notin M_\infty} G(\cO_\fP)\Big)\backslash G(\A_f)/G(F)$$
denote the class group of $G$ over $F$. Finiteness of $\cl(G)$ is proved by Borel (\cite{Borel}) in the case of number fields, by Borel and Prasad (\cite{BP}) in the function field case when $G$ is adjoint semisimple, and by Conrad (\cite{Conrad}) in the general function field case. For any $\beta\in\cl(G)$, fix a representative in $G(\A_f)$ which we denote also by $\beta$, and define
\begin{eqnarray}
\label{eq:V_beta} V_\beta &:=& V(F)\cap \beta^{-1}\big(\prod_{\fP\notin M_\infty} V(\cO_\fP)\big),\\
\label{eq:G_beta} G_\beta &:=& G(F)\cap \beta^{-1}\big(\prod_{\fP\notin M_\infty} G(\cO_\fP)\big)\beta,
\end{eqnarray}
where the intersections above are taken in $V(\A_f)$ and $G(\A_f)$, respectively. Then $G_\beta$ is commensurable with $G(\cO)$ and acts naturally on $V_\beta$. 

Given any $v\in V(F)$ with $m_0(v)\neq 0$ and any $\fP\notin M_\infty$, there exists $g_\fP\in G(F_\fP)$ such that $g_\fP v\in V(\cO_\fP).$ The adele $(g_\fP)_\fP\in G(\A_f)$ can be written as $(g'_\fP)_\fP\beta g$ where $g'_\fP\in G(\cO_\fP)$ for any $\fP\notin M_\infty$, where $\beta$ is the fixed representative of some element of $\cl(G)$ in $G(\A_f)$ and $g\in G(F)$. Then $gv\in V_\beta$. We have the formula
\begin{equation}\label{eq:breakupclG}
 N_{m_0}(V(F),G(F),X) = \sum_\beta N_m(V_\beta, G_\beta, X),
\end{equation}
where the weight function $m$ is defined by
\begin{equation}\label{eq:defnofm}
 m(v) = \frac{m_0(v)}{\#\Stab_{G(F)}(v)}\left(\sum_\beta \sum_{v_{\beta,i}} \frac{1}{\#\Stab_{G_\beta}(v_{\beta,i})}\right)^{-1};
\end{equation}
here $\{v_{\beta,i}\}$ denotes a complete set of representatives for the action of $G_\beta$ on $G(F)v \cap V_\beta$.

\begin{lemma}\label{lem:mcongruence}
 The weight function $m$ in \eqref{eq:defnofm} is defined by congruence conditions.
\end{lemma}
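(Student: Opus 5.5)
The plan is to rewrite the weighted sum in \eqref{eq:defnofm} adelically and then factor it into local masses. Fix $v\in V(F)$ with $m_0(v)\neq0$ and write $\Gamma=\Stab_G(v)$, a finite group scheme. I would first identify the inner sum with a count over the finite adeles. An element $gv$, with $g\in G(F)$, lies in $V_\beta$ exactly when $\beta g v\in\prod_{\fP\notin M_\infty}V(\cO_\fP)$.

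The pairs $(\beta,\ G_\beta\text{-orbit in }G(F)v\cap V_\beta)$, each weighted by $1/\#\Stab_{G_\beta}$, then correspond to the double coset space
$$\Big(\prod_{\fP\notin M_\infty}G(\cO_\fP)\Big)\backslash \{h\in G(\A_f): hv\in \textstyle\prod_\fP V(\cO_\fP)\}/G(F)_v$$
with its natural groupoid weighting. Here $G(F)_v=\Gamma(F)$ is the stabilizer.

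To pass to orbits, consider the map $h\mapsto hv$. It identifies
$$\prod_\fP G(\cO_\fP)\backslash\{h\in G(\A_f): hv\in\textstyle\prod V(\cO_\fP)\}$$
with the set of pairs consisting of a $\prod G(\cO_\fP)$-orbit of an integral adelic point $w$ in $G(\A_f)v$, together with the stabilizer torsor. Concretely, the fibres of $h\mapsto hv$ are cosets of $\Gamma(\A_f)$.

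With this identification, the inner sum becomes a global sum of the form
$$\sum_{w}\frac{\#\Gamma(\A_f)\text{-type factor}}{\#\Gamma(F)\cdot\#\Stab_{\prod G(\cO_\fP)}(w)}.$$
The sum runs over $\prod G(\cO_\fP)$-orbits $w$ on $G(\A_f)v\cap\prod V(\cO_\fP)$. Once the factor $\#\Stab_{G(F)}(v)=\#\Gamma(F)$ in \eqref{eq:defnofm} is included, the global $\#\Gamma(F)$ cancels.

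The next step is to factor this sum over primes. Since $G(\A_f)v$ is the restricted product of the $G(F_\fP)v$, the sum becomes
$$\prod_{\fP\notin M_\infty}\ \sum_{w_\fP\in G(\cO_\fP)\backslash (G(F_\fP)v\cap V(\cO_\fP))}\frac{\#\Stab_{G(F_\fP)}(v)}{\#\Stab_{G(\cO_\fP)}(w_\fP)}.$$
The key reformulation is that the relevant quantity is really the index-weighted count
$$\sum_{h\in G(\cO_\fP)\backslash G(F_\fP)/\Gamma(F_\fP),\ hv\text{ integral}}1,$$
and this count is a local quantity. Call its inverse $m_\fP(v)$. I then define $m_\fP=m_{0,\fP}\cdot m_\fP$ at finite places and $m_\fP=m_{0,\fP}$ at $\fP\in M_\infty$, so that $m=\prod_\fP m_\fP$.

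I must then check the two conditions in the definition of ``defined by congruence conditions''. Each $m_\fP$ is a function on $V(F_\fP)$ that depends only on the $G(F_\fP)$-orbit, so it is $G(F_\fP)$-invariant. On the open set of $v$ with finite stabilizer it is locally constant, since $\fP$-adic orbits are open in fibres of $\inv$ and integrality is an open condition. The local sum is finite because $G(\cO_\fP)$ has finite-index-type double cosets meeting the bounded integral set. For almost all $\fP$, with $v\in V(\cO_\fP)$ and good reduction, the factor $m_\fP(v)$ equals $1$. I would take this for granted from the paper's setup, together with the bijection between $\prod_\fP V(\cO_\fP)$-points and local integral points. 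That gives convergence of the product, since it is eventually constant $1$ for each $v$.

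The main obstacle is making the adelic-to-local factorization rigorous. The main difficulty is that $\Gamma(F)$ is not $\prod\Gamma(F_\fP)$. I handle this by counting $h$ modulo $\Gamma(\A_f)$ rather than modulo $\Gamma(F)$ and tracking the index, which is where the normalizing factor $\#\Stab_{G(F)}(v)$ in \eqref{eq:defnofm} enters. The second point is that strong-approximation and class-group issues are exactly absorbed by summing over all $\beta\in\cl(G)$. I would therefore prove carefully that
$$\bigsqcup_\beta\ \prod G(\cO_\fP)\,\beta\, G(F)=G(\A_f),$$
so that summing over $\beta$ recovers the full adelic double-coset count.
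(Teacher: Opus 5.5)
Your argument is correct in substance and arrives at exactly the local weights $m_\fP$ of \eqref{eq:defnofmp}. The paper's proof just defines these $m_\fP$ and cites \cite[Theorem 4.3.1]{ST} for the identity $m=\prod_\fP m_\fP$, whereas you prove that identity directly by an adelic double-coset count.

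Your route is self-contained and shows why the identity holds. Put $K=\prod_{\fP\notin M_\infty}G(\cO_\fP)$, $X=\{h\in G(\A_f): hv\in\prod_\fP V(\cO_\fP)\}$ and $\Gamma=\Stab_G(v)$. The weighted sum over $\beta$ and over $G_\beta$-orbits in $G(F)v\cap V_\beta$ is then the groupoid cardinality of $K\backslash X/\Gamma(F)$. By orbit counting for the finite group $\Gamma(F)$ acting on the finite set $K\backslash X$, this equals $\#(K\backslash X)/\#\Gamma(F)$. Hence $m(v)=m_0(v)/\#(K\backslash X)$, and $K\backslash X=\prod_\fP G(\cO_\fP)\backslash X_\fP$ with almost all factors singletons.

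Phrasing it this way is cleaner than ``counting modulo $\Gamma(\A_f)$ and tracking the index'': $\Gamma(\A_f)$ is typically infinite. All you need are the local indices $[\Gamma(F_\fP):\Gamma(F_\fP)\cap h^{-1}G(\cO_\fP)h]$, almost all of which equal $1$. It also makes plain that no local--global subtlety about the global orbit $G(F)v$ arises, since $\#\Gamma(F)$ cancels exactly.

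Two points need fixing.

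First, your ``key reformulation'' display sums $1$ over double cosets in $G(\cO_\fP)\backslash G(F_\fP)/\Gamma(F_\fP)$. As written, that is just the number of $G(\cO_\fP)$-orbits on $G(F_\fP)v\cap V(\cO_\fP)$, and it discards the stabilizer ratios from your previous line. The quantity to invert is the number of single cosets,
$\#(G(\cO_\fP)\backslash X_\fP)=\sum_{w_\fP}\#\Stab_{G(F_\fP)}(v)/\#\Stab_{G(\cO_\fP)}(w_\fP)$. Equivalently, weight each double coset by the index above. With this correction your $m_\fP$ coincides with \eqref{eq:defnofmp}.

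Second, your local-constancy argument only looks inside a single fibre of $\inv$. To compare nearby $v'$ with different invariants, you need $\{h\in G(F_\fP): hv'\in V(\cO_\fP)\}$ to stay in a fixed compact set as $v'$ varies near $v$. This follows from properness of the action on the stable locus, which also gives finiteness of $G(\cO_\fP)\backslash X_\fP$. Then, since $V(\cO_\fP)$ is open and closed, $X_\fP(v')=X_\fP(v)$ for all $v'$ sufficiently close to $v$. The paper's proof does not discuss local constancy at all, so this is an addition on your part rather than a divergence from it.
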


\begin{proof}
 For any $\fP\notin M_\infty$, define $m_\fP$ to be
\begin{equation}\label{eq:defnofmp}
 m_\fP(v) = \frac{m_{0,\fP}(v)}{\#\Stab_{G(F_\fP)}(v)}\left(\sum_{v_i} \frac{1}{\#\Stab_{G(\cO_\fP)}(v_{i})}\right)^{-1},
\end{equation}
where $\{v_i\}$ is a complete set of representatives for the action of $G(\cO_\fP)$ on $G(F_\fP)v\cap V(\cO_\fP).$ For $\fP\in M_\infty$, we set $m_\fP$ to be $m_{0,\fP}(v).$ Then for any $v\in V_P(F)$,
$$m(v) = \prod_\fP m_\fP(v)$$
by \cite[Theorem 4.3.1]{ST}.
\end{proof}

We now turn to estimating $N_m(V_\beta,G_\beta,X)$ for some fixed $\beta\in \cl(G)$, or, more generally, to the estimation of $N_m(V_0,G_0,X)$ where $G_0$ is a subgroup of $G(F)$ commensurable with $G(\cO)$, and $V_0$ is a $G_0$-invariant lattice in $V(F)$ commensurable with $V(\cO)$. Let $m_\infty = \prod_{\fP\in M_\infty}m_\fP$ be the product of all the weights at infinity. We compute $N_{m_\infty}(V_0,G_0,X)$ first and then introduce the weights at finite primes from the viewpoint of congruence conditions. We write $V_P(F_\infty)$ for the subset of $V(F_\infty)$ consisting of elements $v$ with $m_{0,\fP}(v)\neq0$ for all $\fP\in M_\infty$, where $F_\infty = \prod_{\fP\in M_\infty}F_\fP$.  We 
construct first a fundamental domain for the action of $G_0$ on $V_P(F_\infty)$.

We say a continuous function $H$ on $V(F_\infty)$ is \emph{homogeneous of degree $d$} if for every $r\in F$ and every $v\in V(F_\infty)$,
$$H(rv) = H(v)\prod_{\fP\in M_\infty}|r|_\fP^d.$$

\vspace{10pt}
\noindent \textbf{AXIOM: Counting at Infinity I}: The height function $H$ when restricted to $\Sigma$ extends to a continuous function $H:S(F_\infty)\rightarrow \R_{\geq 0}$ such that when composed with $\inv:V(F_\infty)\rightarrow S(F_\infty)$, the resulting function on $V(F_\infty)$, still denoted $H$, is homogeneous of degree $d$. Moreover, for any $\lambda\in\R_{>0}$, there exists a pre-compact fundamental domain $R_\lambda$ for the action of $G(F_\infty)$ on $V_P(F_\infty)_{\lambda} = \{v\in V(F_\infty)\mid m_\infty(v)\neq0, H(v) = \lambda\}.$ \hfill$\Box$
\vspace{10pt}

The existence of $R_\lambda$ for all $\lambda$ is equivalent to the existence of $R_1$ when $F$ is a number field since we may take $R_\lambda = \lambda^{1/(d[F:\Q])}R_1$.

When $F$ is a number field, we write $R(X)=[1,X^{1/(d[F:\Q])}).R_1\cap V_P(F_\infty)$. Then $R(X)$ is a fundamental domain for the action of $G(F_\infty)$ on the subset $V_P(F_\infty)_{<X}$ of $V_P(F_\infty)$ consisting of elements of height bounded by $X$. When $F$ is a function field, we take $R(X)$ to be of the form $\Lambda_XR_\lambda\cap V_P(F_\infty)$ where $\Lambda_X\in F$ and $\lambda$ belongs to some fixed finite set of positive real numbers. The key is that as $X$ goes to infinity, $|\Lambda_X|$ also goes to infinity while $R_\lambda$ varies in a fixed finite set, so that $R(X)$ is a fundamental domain for the action of $G(F_\infty)$ on the subset $V_P(F_\infty)_{X}$. Note that in this case, we only consider real numbers $X$ such that $V(F_\infty)_{X}$ is nonempty. 

Let $\FF_0$ denote a fundamental domain for the action of $G_0$ on $G(F_\infty)$ by left multiplication. View $\FF_0.R(X)\subset V_P(F_\infty)_{<X}$ as a multiset where the multiplicity of $v\in \FF_0.R(X)$ is the cardinality of the set $\{g\in \FF_0:v\in gR(X)\}.$ Then $\FF_0.R(X)$ maps surjectively onto a fundamental domain for the action of $G_0$ on $V_P(F_\infty)_{<X}$, where the fiber above any orbit $G_0 v$ has size $\#\Stab_{G(F_\infty)}(v)/\#\Stab_{G_0}(v).$

Let $d\nu$ denote a left-invariant top differential on $V$ defined over $\cO$ and denote by $\nu_\infty$ and $\nu_\fP$ the induced measures on $V(F_\infty)$ and $V(F_\fP)$, for any place $\fP$, respectively. The volume of $V(\cO_\fP)$ computed with respect to $\nu_{\fP}$ is
$1$ for every finite prime $\fP$ and the covolume of $V(\cO)$ in $V(F_\infty)$ with respect to $\nu_\infty$
is $\sqrt{D_F}^{\,\dim V}$ where $D_F$ is the absolute discriminant of
$F$ (see \cite[\S2.1.1]{Weil}). 
Let $\nu_{\infty,0}$ denote a normalization of $\nu_\infty$ where the covolume of $V_0$ in $V(F_\infty)$ is $1$. For any measurable multiset $B\subset V(F_\infty)$ and any integrable function $\varphi$ on $B$, write
$$\nu_{\infty,0}^\varphi(B) = \int_{v\in B} \frac{\varphi(v)}{\#\Stab_{G(F_\infty)}(v)} d\nu_{\infty,0}(v).$$

\vspace{10pt}
\noindent \textbf{AXIOM: Counting at Infinity II}: View $\FF_0.R(X)$ as a multiset as above. Then for any subgroup $G_0$ of $G(F)$ commensurable with $G(\cO)$ and any $G_0$-invariant lattice $V_0$ in $V(F)$ commensurable with $V(\cO)$, we have:
\begin{equation}\label{eq:countingmain}
 N_{m_\infty}(V_0,G_0,X) \sim \nu_{\infty,0}^{m_\infty}(\FF_0.R(X)).
\end{equation}
\vspace{5pt}

Since every coordinate in $R(X)$ is homogeneously expanding with respect to $X$, AXIOM: Counting at Infinity II holds automatically if $\FF_0$ is pre-compact. This follows from Davenport's lemma (Proposition \ref{prop:Davenport}) on counting lattice points in a homogeneously expanding region. When $\FF_0$ is not compact and has cusps going to infinity, the averaging and cutting off the cusp techniques of \cite{BS2} are used to obtain this axiom. See Theorem \ref{prop:maintransference} where we reduce AXIOM: Counting at Infinity II to purely combinatorial conditions on the characters of the maximal split torus of $G$.

Since $\nu_\infty$ is normalized so that $V(\cO)$ has covolume $\sqrt{D_F}^{\,\dim V}$ and $V_0$ is commensurable with $V(\cO)$, Equation \eqref{eq:countingmain} can be rewritten as
\begin{equation}\label{eq:countingmainre}
 N_{m_\infty}(V_0,G_0,X) = \sqrt{D_F}^{\,-\dim V}\nu_{\infty}^{m_\infty}(\FF_0.R(X))\prod_{\fP\notin M_\infty}\int_{V_{0,\fP}}d\nu_\fP + o(X^{\ord(H)}),
\end{equation}
where $V_{0,\fP}$ denotes the completion of $V_0$ in $V(F_\fP)$ and $\ord(H) = \dim(V) / d$.

We can now include weight functions at finitely many finite primes by breaking $V_0$ up into a finite disjoint union of sublattices on which the weight functions are constant and apply \eqref{eq:countingmainre} to each sublattice. Adding in more and more finite primes gives the following upper bound.

\begin{theorem}\label{thm:countingupperbound}
 Suppose $G_0$ is a subgroup of $G(F)$ commensurable with $G(\cO)$ and $V_0$ is a $G_0$-invariant lattice in $V(F)$ commensurable with $V(\cO)$. Then
\begin{equation}\label{eq:countingmaininfupp}
 N_m(V_0,G_0,X) \leq \sqrt{D_F}^{\,-\dim V}\nu_{\infty}^{m_\infty}(\FF_0.R(X))\prod_{\fP\notin M_\infty}\int_{V_{0,\fP}}m_\fP(v)d\nu_\fP(v) + o(X^{\ord(H)}).
\end{equation}
\end{theorem}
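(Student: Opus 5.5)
The plan is to truncate the weight function $m$ to finitely many finite primes, apply \eqref{eq:countingmainre} to each of finitely many sublattices, and then let the set of primes grow. Fix a finite set $T$ of finite primes and define $m_T := m_\infty\prod_{\fP\in T}m_\fP$. Since each $m_\fP$ takes values in $[0,1]$, we have $m = m_T\prod_{\fP\notin T\cup M_\infty}m_\fP\le m_T$ pointwise on $V_0$ (using Lemma \ref{lem:mcongruence} to write $m$ as the product of its local factors). Hence $N_m(V_0,G_0,X)\le N_{m_T}(V_0,G_0,X)$, and it suffices to prove the asymptotic
$$N_{m_T}(V_0,G_0,X) = \sqrt{D_F}^{\,-\dim V}\nu_{\infty}^{m_\infty}(\FF_0.R(X))\prod_{\fP\in T}\int_{V_{0,\fP}}m_\fP\,d\nu_\fP\prod_{\fP\notin T\cup M_\infty}\int_{V_{0,\fP}}d\nu_\fP + o(X^{\ord(H)})$$
and then let $T$ grow.

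To prove this asymptotic for fixed $T$, I would first approximate each $m_\fP$ with $\fP\in T$ by a locally constant function. By the definition of congruence conditions, $m_\fP$ is locally constant off a closed measure-zero set. So for any $\epsilon>0$ there is an ideal $I=\prod_{\fP\in T}\fP^{k_\fP}$ and $G_0$-invariant functions $\psi^{\pm}$ on $V_0/IV_0$ with $\psi^-\le \prod_{\fP\in T}m_\fP\le \psi^+$. These can be chosen with $\int(\psi^+-\psi^-)<\epsilon$ by compactness of $V_{0,\fP}$ and absolute continuity of the integral. They are $G_0$-invariant because $G_0\subset G_\fP$ stabilizes $V_{0,\fP}$ and preserves $I V_{0,\fP}$, and $m_\fP$ is $G(F_\fP)$-invariant.

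We can then split $V_0$ into the finitely many cosets $L = v+IV_0$ and group them into $G_0$-orbits of cosets. For each coset $L$, let $G_L$ be its stabilizer in $G_0$, which has finite index and is therefore still commensurable with $G(\cO)$. The orbits of $G_0$ on $G_0L$ correspond to orbits of $G_L$ on $L$. The translate $L$ is not a lattice, but the counting axiom concerns lattice points in expanding regions, and a translate of $IV_0$ is handled identically. Concretely, I would apply AXIOM: Counting at Infinity II to the lattice $IV_0$ together with a fixed shift; Davenport's lemma (Proposition \ref{prop:Davenport}) is indifferent to translations. The fundamental domain for $G_L$ is a union of $[G_0:G_L]$ translates of $\FF_0$. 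Summing over cosets with weights $\psi^\pm(L)$ and using \eqref{eq:countingmainre} then yields the displayed asymptotic with $\int\psi^\pm$ in place of $\prod_{\fP\in T}\int m_\fP$. The volume of $L$ relative to $V_{0,\fP}$ is exactly its $\nu_\fP$-measure, so the product over $\fP\in T$ factors correctly. Letting $\epsilon\to0$ gives the asymptotic for $m_T$. The weighting by $1/\#\Stab_{G_0}(v)$ versus $1/\#\Stab_{G(F_\infty)}(v)$ is already built into the multiset convention and $\nu^{\varphi}$, exactly as in \eqref{eq:countingmain}.

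Finally, letting $T$ increase through all finite primes, the limsup of $N_m/X^{\ord(H)}$ is bounded by the product over $\fP\in T$ of the local integrals times $\prod_{\fP\notin T}\int_{V_{0,\fP}}d\nu_\fP$. The factor $X^{-\ord(H)}\nu_\infty^{m_\infty}(\FF_0.R(X))$ is bounded by homogeneity. Because $V_{0,\fP}=V(\cO_\fP)$ has volume $1$ for almost all $\fP$ and $m_\fP\le1$, the resulting products decrease monotonically to the full infinite product, which gives \eqref{eq:countingmaininfupp}.

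The main obstacle I expect is justifying Counting at Infinity II for translated sublattices and finite-index subgroups $G_L$ when $\FF_0$ has cusps. This is not literally the stated axiom, which covers only invariant lattices. To handle it, I would instead pass to the $G_0$-invariant lattice $IV_0$ and count the full union of $G_0$-translates of $L$, which is a union of cosets of $IV_0$. That union equals $V_0$ intersected with congruence conditions defined modulo $I$. I would therefore apply the axiom to the sublattice $IV_0$ after shifting, arguing that the cusp estimates depend only on boundedness of the weights.
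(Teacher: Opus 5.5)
Your proposal is correct and follows the paper's own argument, which the paper calls formal and defers to \cite[Theorem 2.21]{BS2}: bound $m$ above by its truncation to finitely many primes, split $V_0$ into finitely many cosets modulo an ideal on which the truncated weight is (approximately) constant, apply \eqref{eq:countingmainre} to each coset, and let the set of primes grow using $m_\fP\le 1$. The obstacle you flag, that the axiom is stated only for invariant lattices and not for their translates, is glossed over in the same way by the paper's phrase ``finite disjoint union of sublattices.'' It is resolved as you suggest, since Davenport's lemma in the main body and the upper bounds in the cusp are both unaffected by translation.
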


\begin{proof}
 The proof is formal. See \cite[Theorem 2.21]{BS2}.
\end{proof}

To obtain the same lower bound, we need a uniformity estimate on the errors.

\vspace{10pt}
\noindent \textbf{AXIOM: Uniformity Estimate for $V$}: For any $\beta\in\cl(G)$ and any $\fP\notin M_\infty$, let $W_{\beta,\fP}$ denote the set of elements $v$ of $V_\beta$ such that $m_\fP(v)\neq 1$. Then for any positive real number $M,$
\begin{equation}
\label{eq:uniform}
 N\Big(\bigcup_{N\fP> M}W_{\beta,\fP},G_\beta,X\Big) = O\Big(\frac{X^{\ord(H)}}{f(M)}\Big) + o(X^{\ord(H)}),
\end{equation}
where $f(M)$ is a function of $M$ that approaches $\infty$ as $M$ approaches $\infty$.\hfill$\Box$
\vspace{10pt}

See Section \ref{sec:uniformity} for a list of methods one can use to check AXIOM: Uniformity Estimate for $V$. Once this tail estimate is obtained, the inequality in \eqref{eq:countingmaininfupp} may be turned into an equality:

\begin{theorem}\label{thm:countbeforevolume}
 For each $\beta$, let $\FF_\beta$ denote a fundamental domain for the action of $G_\beta$ on $G(F_\infty)$. Then
\begin{equation}\label{eq:countbeforevolume}
 N_{m_0}(V(F), G(F), X) = \sqrt{D_F}^{\,-\dim V}\sum_{\beta\in\cl(G)} \nu_\infty^{m_\infty}(\FF_\beta.R(X))\prod_{\fP\notin M_\infty}\int_{V_{\beta,\fP}}m_\fP(v)d\nu_\fP(v) + o(X^{\ord(H)}).
\end{equation}
\end{theorem}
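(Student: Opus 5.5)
We will deduce Theorem \ref{thm:countbeforevolume} from the decomposition \eqref{eq:breakupclG}, the upper bound of Theorem \ref{thm:countingupperbound}, and AXIOM: Uniformity Estimate for $V$, in the standard way (as in \cite[Theorem 2.21]{BS2}). By \eqref{eq:breakupclG}, $N_{m_0}(V(F),G(F),X)=\sum_\beta N_m(V_\beta,G_\beta,X)$, with $m$ defined by congruence conditions by Lemma \ref{lem:mcongruence} and $m=\prod_\fP m_\fP$ with the local factors \eqref{eq:defnofmp}. Since $\cl(G)$ is finite, it suffices to prove, for each fixed $\beta$,
\begin{equation*}
N_m(V_\beta,G_\beta,X) = \sqrt{D_F}^{\,-\dim V}\nu_\infty^{m_\infty}(\FF_\beta.R(X))\prod_{\fP\notin M_\infty}\int_{V_{\beta,\fP}}m_\fP\,d\nu_\fP + o(X^{\ord(H)}).
\end{equation*}
Theorem \ref{thm:countingupperbound}, applied with $G_0=G_\beta$ and $V_0=V_\beta$, gives the upper bound, so only the matching lower bound is needed.

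For the lower bound, fix $M>0$ and let $m^{(M)}=m_\infty\prod_{N\fP\le M}m_\fP$, the truncation to finitely many finite primes. Partition $V_\beta$ into finitely many translates of a sublattice on which $\prod_{N\fP\le M}m_\fP$ is constant, up to a measure-zero exceptional set handled by approximating from inside with locally constant functions. Applying \eqref{eq:countingmainre} (AXIOM: Counting at Infinity II) to each translate and summing gives an asymptotic for $N_{m^{(M)}}(V_\beta,G_\beta,X)$ equal to the right side above, with the Euler product truncated at $N\fP\le M$. Since $0\le m\le m^{(M)}$ and the two agree outside $\bigcup_{N\fP>M}W_{\beta,\fP}$, we have
\begin{equation*}
N_m(V_\beta,G_\beta,X)\ \ge\ N_{m^{(M)}}(V_\beta,G_\beta,X) - N\Big(\bigcup_{N\fP> M}W_{\beta,\fP},G_\beta,X\Big).
\end{equation*}
By the uniformity axiom, the subtracted term is $O(X^{\ord(H)}/f(M))+o(X^{\ord(H)})$. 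Here we use that weighting by $m/\#\Stab$ is bounded by $1$ on orbits, and that stabilizers have bounded order by AXIOM: \localCondition.

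Dividing by $X^{\ord(H)}$, taking $\liminf_{X\to\infty}$, and then letting $M\to\infty$, we need the truncated product $\prod_{N\fP\le M}\int m_\fP$ to converge to the full product, and $\nu_\infty^{m_\infty}(\FF_\beta.R(X))$ to be of exact order $X^{\ord(H)}$ by homogeneity of $R(X)$. The full product converges because $\int_{V_{\beta,\fP}}m_\fP\,d\nu_\fP$ is a nonincreasing limit of the truncations, each factor lies in $[0,1]$, and $V_{\beta,\fP}=V(\cO_\fP)$ for almost all $\fP$. Then $f(M)\to\infty$ yields the lower bound, and combined with the upper bound this gives the equality. Summing over $\beta$ gives \eqref{eq:countbeforevolume}.

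The main obstacle is the passage from finitely many congruence conditions to the limit. We must make sure the uniformity axiom controls exactly the discrepancy $m^{(M)}-m$, which requires $m_\fP=1$ off $W_{\beta,\fP}$; this holds by definition. We must also ensure that in the function field case, where $X$ runs over a discrete set of attainable heights, the limits are taken along that set. Otherwise the argument is formal.
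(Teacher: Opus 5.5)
Your proposal is correct and follows essentially the same route as the paper. The paper also splits via \eqref{eq:breakupclG} into finitely many $\beta$-pieces, takes the upper bound from Theorem \ref{thm:countingupperbound} (the truncation-to-finitely-many-primes argument of \cite[Theorem 2.21]{BS2}), and then uses AXIOM: Uniformity Estimate for $V$ to bound the discrepancy $m^{(M)}-m$, supported on $\bigcup_{N\fP>M}W_{\beta,\fP}$, which turns the inequality \eqref{eq:countingmaininfupp} into an equality.
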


It remains to compute the integrals at infinity and at $\fP\notin M_\infty$. Let $d\mu,d\tau$ denote left-invariant top differentials defined over $\cO$ on $S$ and $G$ respectively. Denote by $\mu_\infty,\mu_\fP,\tau_\infty,\tau_\fP$ the induced measures on $S(F_\infty),S(F_\fP),G(F_\infty),G(F_\fP)$, respectively. 
We now have the following change-of-measure formula (\cite[\S3.4]{BS2}). This requires AXIOM: \nicerep, and in particular that the sum of the degrees of the invariants equals the dimension of $V$.

\begin{proposition}\label{prop:changeofmeasure}
 There exists a nonzero constant $\J\in F^\times$ such that for any place $\fP$ of $F$, any open subset $R$ of $S(F_\fP)$, any continuous function $s:R\rightarrow V(F_\fP)$ such that $\inv\circ s = \mathrm{id}_R$ and any measurable function $\varphi$ on $V(F_\fP)$,  we have
\begin{equation}\label{eq:changeofmeasure}
 \int_{v\in G(F_\fP).s(R)}\varphi(v) d\nu_\fP(v) = |\J|_\fP\int_R\int_{G(F_\fP)} \varphi(gs(f))\,d\tau_\fP(g) d\mu_\fP(f),
\end{equation}
where $G(F_\fP).s(R)$ is viewed as a multiset and $|\J|_\fP$ denote the normalized $\fP$-adic valuation of $\J$ defined in \S\ref{sec:notation}.
\end{proposition}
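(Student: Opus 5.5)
Since both sides of \eqref{eq:changeofmeasure} are integrals of $\varphi$ against measures on the multiset $G(F_\fP).s(R)$, it suffices to compare two top differential forms, following \cite[\S3.4]{BS2}. Consider the morphism
$$\sigma\colon G\times S\dashrightarrow V,\qquad (g,f)\mapsto g\,s(f)$$
locally over $F_\fP$, and compare the pullback $\sigma^*d\nu$ with $d\tau\wedge d\mu$. Both are top forms on the $(\dim G+\dim S)=\dim V$-dimensional source; the dimension equality holds by AXIOM: \nicerep. So they differ by a function $\J(g,f)$. Since $d\tau$ and $d\nu$ are invariant under left translation, $\J$ is independent of $g$. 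Once we know $\J(g,f)=\J(f)$ is a nonzero constant lying in $F^\times$, the formula follows from the usual change of variables for $\fP$-adic or archimedean analytic maps, with the fibre multiplicity absorbed into the multiset convention. This gives the factor $|\J|_\fP$.

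The key step is to show $\J$ is constant in $f$ and to identify it algebraically, independently of the section $s$. Work over $\bar F$ at a generic $f$. The fibre $\inv^{-1}(f)$ is a single open orbit $G\cdot s(f)$ with finite stabilizer, so the tangent map $\mathfrak g\oplus T_fS\to V$ is an isomorphism. Its determinant, computed with respect to the fixed integral bases of $\mathfrak g$, $S$ and $V$, depends only on $d\inv$ modulo the image of $\mathfrak g$. It therefore does not depend on the choice of $s$, since changing $s$ alters $ds$ only by vectors tangent to orbits. Hence $\J$ defines a regular function $\J(f)$ on a dense open subset of $S$. Because the construction is algebraic and defined over $\cO$, this function is a polynomial in the invariants, or at least a rational function with coefficients in $F$.

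To show $\J$ is constant, I would use homogeneity. Scaling $v\mapsto\lambda v$ multiplies $d\nu$ by $\lambda^{\dim V}$. On $S$ the scaling acts with weights $\deg I_i$, so $d\mu$ scales by $\lambda^{\sum\deg I_i}$, and $G$ is unaffected. The condition $\sum\deg I_i=\dim V$ then shows $\J$ is invariant under this weighted scaling, so $\J(\lambda\cdot f)=\J(f)$. I also need $\J$ to have no zeros or poles on the generic locus. For this I would argue that the divisor of $\J$ is supported on the discriminant locus and is $\bG_m$-stable, and then use the fact that a $\bG_m$-invariant regular function on the weighted affine space $S$ with positive weights is constant.

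The main obstacle is exactly this step: controlling the behaviour of $\J$ along the discriminant divisor, and showing it is a polynomial rather than merely a rational function. I would first try to show it is regular on all of $S$ by computing $\J$ as the coefficient relating $\wedge^{\dim G}\mathfrak g\otimes\det(d\inv)$ to $\det V$. This expresses $\J$ as a polynomial in the coordinates of $v$, which is $G$-invariant and hence a polynomial in the $I_i$. A polynomial invariant under weighted scaling with positive weights is constant. Finally, $\J\neq0$ because generically the orbit map is étale onto its image, and $\J\in F^\times$ because everything is defined over $\cO\subset F$.
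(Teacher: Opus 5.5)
Your reductions are sound and follow the same mechanism the paper invokes when it cites \cite[\S3.4]{BS2} and stresses $\sum\deg I_i=\dim V$. The Jacobian $\J$ is independent of $g$ by left invariance, independent of the section (by $G$-invariance of $d\nu$ and unimodularity of $G$), and invariant under the weighted scaling. What is missing is the step that forces $\J$ to be constant.

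Degree-zero homogeneity only makes $\J$ a $\bG_m$-invariant rational function on $S$, and such functions need not be constant; for binary quartics, $I^3/J^2$ is one. Your first fix does not help. A $\bG_m$-stable divisor supported on some bad locus forces constancy only if that locus is a single irreducible divisor. Otherwise $p_1^{d_2}/p_2^{d_1}$, with $p_i$ weighted homogeneous of degree $d_i$, is a counterexample, and you have shown neither the support claim nor irreducibility.

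Your second fix asserts that the determinant-line identification expresses $\J$ as a polynomial in $v$; it does not. Take $X_1,\dots,X_r$ a basis of $\mathfrak g$ with $d\tau(X_1,\dots,X_r)=1$, and $d\mu=dI_1\wedge\cdots\wedge dI_k$. What the construction actually yields is the identity of $G$-invariant polynomial $k$-forms on $V$
\[
d\nu(X_1v,\dots,X_rv,\,\cdot\,)=\J(v)\,dI_1\wedge\cdots\wedge dI_k ,
\]
valid wherever $d\inv_v$ is surjective. Thus $\J$ is a ratio of minors and a priori has poles along the locus where $d\inv$ drops rank. To conclude that $\J$ is a polynomial you need that locus to have codimension at least $2$ in $V$, after which Hartogs on the normal variety $V$ finishes. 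That codimension statement is a real theorem (smoothness in codimension one of the quotient map for a semisimple group with polynomial invariant ring, due to Knop in characteristic $0$), not a formal consequence of your construction.

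When an algebraic section $s\colon S\to V$ exists, as with $\iota_n$ or the sections of \cite{BG1,SW}, the usual way to close the step is to compute with it. Then $\J(f)=\det\bigl((X,w)\mapsto X\,s(f)+ds_f(w)\bigr)$ on $\mathfrak g\oplus T_fS$ is visibly a polynomial in $f$, and your homogeneity argument forces it to be constant. Your section-independence argument then transfers that constant to arbitrary local sections. Without such a section, as in the general hyperelliptic case, you genuinely need the codimension-two input above.

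Two smaller points:
\begin{itemize}
\item Nonvanishing via ``generically \'etale'' is a characteristic-$0$ argument. Over a function field the map $G\times S\to V$ can be inseparable, so $\J\neq0$ in $F$ has to be checked.
\item The statement only assumes $s$ is continuous, so before differentiating you should replace it locally by an analytic section landing in the same $G(F_\fP)$-orbits. This is harmless because both sides of \eqref{eq:changeofmeasure} depend only on the orbit of $s(f)$.
\end{itemize}
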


To apply Proposition \ref{prop:changeofmeasure}, we need to know that the local sections $s$ exist.

\vspace{10pt}
\noindent \textbf{AXIOM: Local Spreading}: For any place $\fP$ of $F$ and any $v\in V(F_\fP)$ with $m_\fP(v)\neq 0$, there exists a $\fP$-adically open neighborhood $R$ of $\inv(v)$ and a continuous map $s:R\rightarrow V(F_\fP)$ such that $\inv\circ s = \textrm{id}_R$ and $s(\inv(v)) = v$. \hfill$\Box$
\vspace{10pt}

\begin{remark}
    We remark that if there is an algebraic section $s:S\rightarrow V$ defined over $\cO[1/N]$ for some $N\in \cO$, then AXIOM: Local Spreading is satisfied. 
\end{remark}

\begin{remark}\label{rem:RI}
    If $\{f\in S(F_\infty)\colon m_\infty(f)\neq0, H(f) \leq \lambda\}$ is compact (for example, if constructed by Proposition \ref{prop:Sigma}) and the number of $F_\infty$-orbits with invariants $f$ with $m_\infty(f)\neq 0$ is absolutely bounded, then AXIOM: Local Spreading gives the existence of the pre-compact fundamental domains $R_\lambda$ in AXIOM: Counting at Infinity I.
\end{remark}

For any place $\fP$ of $F$ and any $f\in S(F_\fP)$, we denote $\inv^{-1}(f)$ by $V_f(F_\fP).$ Then we have the following results.

\begin{lemma}\label{lem:volumecomputation}
 Fix some $\beta\in \cl(G)$ and $\fP\notin M_\infty$. Then
\begin{eqnarray}
\label{eq:massinf} \nu_\infty^{m_\infty}(\FF_\beta.R(X)) &=& |\J|_\infty\tau_\infty(\FF_\beta) \int_{S(F_\infty)_{X}}\sum_{v\in G(F_\infty)\backslash V_f(F_\infty)} \frac{m_{0,\infty}(v)}{\#\Stab_{G(F_\infty)}(v)}\,d\mu_\infty(f),\\
\nonumber\int_{V_{\beta,\fP}}m_\fP(v)d\nu_\fP(v)&=& \int_{V(\cO_\fP)}m_\fP(v)d\nu_\fP(v)\\
\label{eq:massfp}&=&|\J|_\fP\tau_\fP(G(\cO_\fP))\int_{S(F_\fP)} \sum_{v\in G(F_\fP)\backslash V_f(F_\fP)} \frac{m_{0,\fP}(v)}{\#\Stab_{G(F_\fP)}(v)}\,d\mu_\fP(f),
\end{eqnarray}
where $|\J|_\infty = \prod_{\fP\in M_\infty}|\J|_\fP$ and where $S(F_\infty)_X$ denotes the subset of $S(F_\infty)$ consisting of elements with height less than $X$ if $F$ is a number field, and equal to $X$ if $F$ is a function field.
\end{lemma}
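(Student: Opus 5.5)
The plan is to derive both formulas from the change-of-measure formula, Proposition \ref{prop:changeofmeasure}, after decomposing the domain of integration into pieces on which local sections exist (AXIOM: Local Spreading).

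\textbf{The archimedean identity \eqref{eq:massinf}.} By AXIOM: Counting at Infinity I, $R(X)$ is a fundamental domain for $G(F_\infty)$ acting on $V_P(F_\infty)_{<X}$ (or $V_P(F_\infty)_X$ in the function field case). Using AXIOM: Local Spreading at each $\fP\in M_\infty$, together with compactness of the relevant subsets of $S(F_\infty)$ (Remark \ref{rem:RI}), cover $\inv(R(X))$ by finitely many open sets $R_j$, up to measure zero. On each $R_j$ choose continuous sections $s_{j,1},\dots,s_{j,r_j}$ whose images meet each $G(F_\infty)$-orbit with $m_\infty\neq0$ over $R_j$ exactly once. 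Then $\FF_\beta.R(X)$ agrees, as a multiset and up to measure zero, with $\bigsqcup_{j,k}\FF_\beta.s_{j,k}(R_j)$.

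Apply \eqref{eq:changeofmeasure} to $\varphi(v)=m_\infty(v)/\#\Stab_{G(F_\infty)}(v)$ and the multiset $G(F_\infty).s_{j,k}(R_j)$. In that multiset each $v$ occurs with multiplicity $\#\Stab_{G(F_\infty)}(v)$, whereas in $\FF_\beta.R(X)$ the multiplicity is $\#\{g\in\FF_\beta: v\in gR(X)\}$. To handle this, replace $\int_{G(F_\infty)}$ by $\int_{\FF_\beta}$, which is legitimate because $\varphi$ is $G(F_\infty)$-invariant. The inner integral then becomes $\tau_\infty(\FF_\beta)\varphi(s(f))$. Summing over $k$ produces the sum over $G(F_\infty)\backslash V_f(F_\infty)$ of $m_{0,\infty}(v)/\#\Stab(v)$, and the factor $|\J|_\infty$ comes from multiplying \eqref{eq:changeofmeasure} over $\fP\in M_\infty$. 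The main thing to check in this step is the multiplicity bookkeeping: the fibre of $\FF_\beta\times R(X)\to\FF_\beta.R(X)$ must be compared with the stabilizer weighting, so that exactly one factor $1/\#\Stab$ survives.

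\textbf{The first finite-place equality.} Write $\beta_\fP$ for the $\fP$-component of $\beta$, so that $V_{\beta,\fP}=\beta_\fP^{-1}V(\cO_\fP)$. The weight $m_\fP$ is $G(F_\fP)$-invariant: this is clear from \eqref{eq:defnofmp}, since the sum over $G(\cO_\fP)$-orbits in $G(F_\fP)v\cap V(\cO_\fP)$ depends only on the orbit $G(F_\fP)v$. Also, $\nu_\fP$ is $G(F_\fP)$-invariant because $G$ is semisimple, so $\det$ acting on $V$ is trivial. Substituting $v\mapsto\beta_\fP^{-1}v$ therefore gives the first equality.

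\textbf{The second finite-place equality \eqref{eq:massfp}.} Again cover $S(F_\fP)$ up to measure zero by countably many open sets carrying local sections $s_{j,k}$, one for each $G(F_\fP)$-orbit in $V_P$. There are countably many orbits since $\fP$-adic orbit spaces of fibres are finite; this uses the finiteness of stabilizers and the fact that $H^1$ over local fields is finite.

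For each orbit $G(F_\fP)v$, decompose $G(F_\fP)v\cap V(\cO_\fP)=\bigsqcup_i G(\cO_\fP)v_i$. Then
$$\int_{G(F_\fP)v\cap V(\cO_\fP)}(\cdot)\,d\nu_\fP=\sum_i\int_{G(\cO_\fP)v_i}(\cdot)\,d\nu_\fP.$$
Applying \eqref{eq:changeofmeasure} with $\varphi=m_\fP\cdot\mathbf{1}_{V(\cO_\fP)}$ and integrating $g$ over $G(F_\fP)$, the inner integral equals
$$m_\fP(v)\,\tau_\fP(G(\cO_\fP))\sum_i\frac{\#\Stab_{G(F_\fP)}(v)}{\#\Stab_{G(\cO_\fP)}(v_i)}.$$
This holds because $\{g: gv\in V(\cO_\fP)\}$ is the disjoint union of cosets $G(\cO_\fP)h_i\Stab_{G(F_\fP)}(v)$, and each such coset has measure $\tau_\fP(G(\cO_\fP))\,\#\Stab_{G(F_\fP)}(v)/\#\Stab_{G(\cO_\fP)}(v_i)$.

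Plugging in the definition \eqref{eq:defnofmp} of $m_\fP$ cancels the sum over $i$ exactly, leaving $m_{0,\fP}(v)\tau_\fP(G(\cO_\fP))$. We must then reconcile this with the $1/\#\Stab$ weighting in the parametrization, which is the same multiplicity issue as in the archimedean step. This cancellation is the point of the definition of $m_\fP$, and the stabilizer and multiplicity accounting here is the main obstacle I expect. Summing over orbits and integrating over $f$ yields \eqref{eq:massfp}.
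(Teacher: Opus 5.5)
Your proposal is correct and is essentially the paper's argument: local sections plus Proposition~\ref{prop:changeofmeasure} at infinity, $G(F_\fP)$-invariance of $m_\fP$ and $\nu_\fP$ for the first finite equality, and the coset computation $\tau_\fP(\{g: gv\in V(\cO_\fP)\})=\tau_\fP(G(\cO_\fP))\sum_i \#\Stab_{G(F_\fP)}(v)/\#\Stab_{G(\cO_\fP)}(v_i)$ cancelling the normalization in \eqref{eq:defnofmp}, which is exactly the computation of \cite[Corollary 3.8]{BS2} that the paper cites (the remaining factor $1/\#\Stab_{G(F_\fP)}(v)$ simply undoes the multiplicity $\#\Stab_{G(F_\fP)}(v)$ of $v$ in the multiset $G(F_\fP).s(R)$). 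Two points to tighten: restricting the $g$-integral from $G(F_\infty)$ to $\FF_\beta$ is justified because the Jacobian identity behind \eqref{eq:changeofmeasure} is pointwise (the map $(g,f)\mapsto g s(f)$ has constant Jacobian $|\J|_\infty$), not because $\varphi$ is invariant, since that invariance only lets you evaluate the inner integral as $\tau_\infty(\FF_\beta)\varphi(s(f))$; and to identify $\FF_\beta.R(X)$ with $\bigsqcup_{j,k}\FF_\beta.s_{j,k}(R_j)$ as multisets you should take the $s_{j,k}(R_j)$ to be the pieces of $R(X)$ itself, as in Remark~\ref{rem:RI}.
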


\begin{proof}
 The first equation follows by covering $R(X)$ by the images of finitely many sections using the pre-compactness of $R(X)$ (AXIOM: Counting at Infinity I). The second equation follows from the $G(F_\fP)$-invariance of $m_\fP$ and $d\nu_\fP$; here the $G(F_\fP)$-invariance of $d\nu_\fP$ follows since semisimple groups have no nontrivial characters. The third equation follows from the same computation as in the proof of \cite[Corollary 3.8]{BS2}.
\end{proof}

Let $\mu_\infty^*$ be a normalization of $\mu_\infty$ such that the covolume of $S(\cO)$ in $S(F_\infty)$ is $1$. That is, $\mu_\infty^* = \sqrt{D_F}^{\,-\dim S}\mu_\infty$. Define
\begin{eqnarray*}
M_\infty(m_0,X) &=& \int_{S(F_\infty)_{X}}\sum_{v\in G(F_\infty)\backslash V_f(F_\infty)} \frac{m_{0,\infty}(v)}{\#\Stab_{G(F_\infty)}(v)}\,d\mu_\infty^*(f),\\
M_\fP(m_0) &=& \int_{S(F_\fP)} \sum_{v\in G(F_\fP)\backslash V_f(F_\fP)} \frac{m_{0,\fP}(v)}{\#\Stab_{G(F_\fP)}(v)}\,d\mu_\fP(f),\mbox{ for }\fP\notin M_\infty.
\end{eqnarray*}
We call these the \emph{local masses} at $\infty$ and $\fP$ respectively. Combining Theorem \ref{thm:countbeforevolume} and Lemma \ref{lem:volumecomputation} gives
\begin{equation}\label{thm:countingaftervolume}
 N_{m_0}(V(F),G(F),X) = \tau_{G,F}\, M_\infty(m_0,X)\!\prod_{\fP\notin M_\infty} M_\fP(m_0) + o(X^{\ord(H)}),
\end{equation}
where $\tau_{G,F}$ denotes the Tamagawa number of $G$ over $F$. Note here that we used $\dim \; S + \dim \; G = \dim \; V$ and absorbed $\sqrt{D_F}^{\,-\dim G}$ into the Tamagawa number $\tau_{G,F}$.

We use the same method (where $V=S$ and $G$ is the trivial group) to count the number of elements in $\Sigma=\text{inv}(V_P(F))$. The $G(F_\fP)$-invariance of $m_{0,\fP}$ for any $\fP$ implies that $\Sigma$ is defined by congruence conditions and so its characteristic function $\chi_\Sigma$ can be factored as $\prod_\fP \chi_{\Sigma,\fP}$. Moreover, Condition 3 of AXIOM: \localCondition~implies that $\Sigma\subset S(\cO)$. We define the local masses $M_\infty(\Sigma,X)$ and $M_\fP(\Sigma)$ for $\fP\notin M_\infty$ as follows:
\begin{equation}\label{eq:localmassF}
 M_\infty(\Sigma,X) = \int_{S(F_\infty)_{X}} \chi_{\Sigma,\infty} \,d\mu_\infty^*,\qquad
 M_\fP(\Sigma) = \int_{S(F_\fP)} \chi_{\Sigma,\fP} \,d\mu_\fP.
\end{equation}
We write $\ord(H)'$ for the order of magnitude of $M_\infty(\Sigma,X)$. Note that if $$1\ll\sum_{v\in G(F_\infty)\backslash V_f(F_\infty)} \frac{m_{0,\infty}(v)}{\#\Stab_{G(F_\infty)}(v)}\ll 1$$ as $f$ varies in $S(F_\infty)$ with $\chi_{\Sigma,\infty}(f)\neq0$, then $\ord(H)=\ord(H)'$.

\vspace{10pt}
\noindent \textbf{AXIOM: Uniformity Estimate for $S$}: For any $\fP\notin M_\infty$, let $W_{\fP}$ denote the set of elements $f$ of $S(\cO_\fP)$ such that $\chi_{\Sigma,\fP}(f)\neq1$. Then for any positive real number $M$,
\begin{equation}
\label{eq:uniformS}
 \#\Big\{f\in\bigcup_{N\fP> M}W_{\fP}:H(f)<X\Big\} = O_\epsilon\Big(\frac{X^{\ord(H)'}}{g(M)}\Big) + o(X^{\ord(H)'}),
\end{equation}
where $g(M)$ is a function of $M$ that approaches $\infty$ as $M$ approaches $\infty$, and where $X^{\ord(H)'}$ is the order of magnitude of the number of elements in $S(\cO_\fP)$ with height bounded by $X$.\hfill$\Box$
\vspace{10pt}

Then the number of elements of $\Sigma$ with height less than $X$ if $F$ is a number field and equal to $X$ if $F$ is a function field is
\begin{equation}\label{eq:countforS}
 M_\infty(\Sigma,X)\prod_{\fP\notin M_\infty}M_\fP(\Sigma) + o(X^{\ord(H)'}).
\end{equation}

We can now state our main theorem.

\begin{theorem}\label{thm:countingmain}
 Let $F$ be a global field and let $M_\infty$ be the set of infinite places of $F$ when $F$ is a number field and be a finite set of places corresponding to a finite set of closed points of $\sc$ when $F$ is the function field of $\sc$. Let $(G,V)$ be a representation over the ring of integers $\cO$ with GIT quotient $S$ and let $\inv:V\rightarrow S$ denote the natural invariant map. Let $V(F)^\gen$ be a $G(F)$-invariant subset of $V(F)$. Let $m_0$ be a weight function on $V(F)$ defined by congruence conditions. Let $V_P(F)$ denote the subset of $V(F)$ where $m_0$ is nonzero. Write $\Sigma=\inv(V_P(F))$ and let $H$ denote a height function on $\Sigma$. Suppose the following axioms are satisfied:
\begin{itemize}
 \item[{\rm 1.}] AXIOM: \nicerep,
 \item[{\rm 2.}] AXIOM: \localCondition,
 \item[{\rm 3.}] AXIOM: Counting at Infinity I and II,
 \item[{\rm 4.}] AXIOM: Local Spreading,
 \item[{\rm 5.}] AXIOM: Uniformity Estimate for $S$.
\end{itemize}
Then the average number of $G(F)$-orbits in $V(F)^\gen$ with height at most $X$ if $F$ is a number field and equal to $X$ if $F$ is a function field, where each $G(F)v$ orbit is weighted by $m_0(v)/\#\Stab_{G(F)}(v)$, is bounded above by
\begin{equation}\label{eq:average}
 \tau_{G,F}\,\frac{M_\infty(m_0,X)\prod_{\fP\notin M_\infty} M_\fP(m_0)}{M_\infty(\Sigma,X)\prod_{\fP\notin M_\infty} M_\fP(\Sigma)}.
\end{equation}
If moreover AXIOM: Uniformity Estimate for $V$ also holds, then this upper bound is also a lower bound.
\end{theorem}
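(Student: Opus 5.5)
The plan is to divide an upper bound for the weighted number of generic orbits by an asymptotic count of the invariants in $\Sigma$. For the numerator, I first use \eqref{eq:breakupclG} to write $N_{m_0}(V(F),G(F),X)=\sum_{\beta\in\cl(G)}N_m(V_\beta,G_\beta,X)$. This is a finite sum, because $\cl(G)$ is finite by the results of Borel, Borel--Prasad and Conrad. By Lemma~\ref{lem:mcongruence} the weight $m$ factors as $\prod_\fP m_\fP$, which is what Theorem~\ref{thm:countingupperbound} needs. Each $G_\beta$ is commensurable with $G(\cO)$ and each $V_\beta$ is a $G_\beta$-invariant lattice commensurable with $V(\cO)$, so Theorem~\ref{thm:countingupperbound} applies to every $\beta$. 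It rests on AXIOM: Counting at Infinity I and II, with AXIOM: \localCondition~guaranteeing that the stabilizer weighting does not change asymptotics. Summing over $\beta$ gives the right-hand side of \eqref{eq:countbeforevolume}, but as an inequality.

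Next I evaluate each factor using Lemma~\ref{lem:volumecomputation}, which uses AXIOM: Local Spreading through Proposition~\ref{prop:changeofmeasure}. The archimedean factor becomes $|\J|_\infty\tau_\infty(\FF_\beta)$ times the archimedean mass integral. The factor at each $\fP\notin M_\infty$ becomes $|\J|_\fP\tau_\fP(G(\cO_\fP))M_\fP(m_0)$, and this is independent of $\beta$.

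I then collect constants. The product formula gives $\prod_\fP|\J|_\fP=1$. The sum $\sum_\beta\tau_\infty(\FF_\beta)\prod_{\fP}\tau_\fP(G(\cO_\fP))$ is the Tamagawa volume of $G(F)\backslash G(\A)$, because $G(\A)=\bigsqcup_\beta G(F)\beta^{-1}\big(G(F_\infty)\prod_\fP G(\cO_\fP)\big)$ and $G(F)\cap\beta^{-1}\big(\prod_\fP G(\cO_\fP)\big)\beta=G_\beta$. For a function field one also has to match the discrete height levels $S(F_\infty)_X$ with the $R(X)$ of AXIOM: Counting at Infinity I. The leftover discriminant powers $\sqrt{D_F}^{-\dim V}$ and $\sqrt{D_F}^{\dim S}$, which relate $\mu_\infty$ to $\mu_\infty^*$, combine to $\sqrt{D_F}^{-\dim G}$ because $\dim V=\dim S+\dim G$ (AXIOM: \nicerep). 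This factor is absorbed into $\tau_{G,F}$. The result is $N_{m_0}\le\tau_{G,F}M_\infty(m_0,X)\prod_\fP M_\fP(m_0)+o(X^{\ord(H)})$. If AXIOM: Uniformity Estimate for $V$ also holds, Theorem~\ref{thm:countbeforevolume} turns this into an equality, and that gives the lower-bound assertion.

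For the denominator, I run the same argument with $V=S$ and trivial $G$. Since $m_{0,\fP}$ is $G(F_\fP)$-invariant, $\chi_\Sigma$ factors into local conditions, and Condition~3 of AXIOM: \localCondition~gives $\Sigma\subset S(\cO)$. I count points of $S(\cO)$ of bounded height with finitely many congruence conditions; this needs only lattice-point counting in the homogeneously expanding regions $S(F_\infty)_X$ (Proposition~\ref{prop:Davenport}, or Proposition~\ref{prop:Sigma} when the height is weighted-projective). That yields an upper bound by the truncated product. AXIOM: Uniformity Estimate for $S$ then lets me let $M\to\infty$ and obtain \eqref{eq:countforS} as an asymptotic \emph{equality}, not just a bound. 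Dividing the two asymptotics gives \eqref{eq:average}, valid as long as the denominator's main term has exact order $X^{\ord(H)'}$ with nonzero constant.

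I expect the main obstacle to be the bookkeeping in the numerator. The constants $\J$, the class-group decomposition and the discriminant normalizations must combine exactly into $\tau_{G,F}$, and in the function field case the height sets are discrete, so the identification needs care. There is also the possible mismatch between $\ord(H)$ and $\ord(H)'$. I would handle it by relying on the remark after \eqref{eq:localmassF} and noting that the ratio is in any case the quotient of the two main terms.
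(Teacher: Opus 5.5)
Your proposal is correct and follows essentially the paper's own route. You decompose over $\cl(G)$ via \eqref{eq:breakupclG}, apply Theorem~\ref{thm:countingupperbound} (upgraded to an equality by Theorem~\ref{thm:countbeforevolume} under the uniformity estimate for $V$), evaluate the volumes with Lemma~\ref{lem:volumecomputation}, and collapse the constants into $\tau_{G,F}$ as in \eqref{thm:countingaftervolume}; you then count $\Sigma$ by the same method with $G$ trivial, use AXIOM: Uniformity Estimate for $S$ to get \eqref{eq:countforS}, and divide. Your explicit adelic decomposition identifying $\sum_\beta\tau_\infty(\FF_\beta)\prod_\fP\tau_\fP(G(\cO_\fP))$ with the volume of $G(F)\backslash G(\A)$ fills in a step that the paper only asserts.
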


For convenience, we group the first three axioms in Theorem \ref{thm:countingmain} together and refer to them as \emph{AXIOM: Counting}.

\section{Verification of axioms over global fields}\label{sec:count}

In this section, we describe how to generalize the verification of the counting and uniformity axioms from the case of $F=\Q$ to general global fields $F$.

\subsection{Verifying the counting axioms}\label{sec:transference}

The counting axiom that typically presents the greatest difficulty is AXIOM: Counting at Infinity II. In the many cases in the literature where this axiom has been verified for $F=\Q$, it has typically been achieved by proving specific combinatorial ``cutting off the cusp'' results.
In this subsection, we prove that this combinatorial input in fact suffices to verify the axiom for general global fields.

Let $P$ be a minimal parabolic $F$-subgroup of $G$ and let $T$ be a maximal split torus of $G$ contained in $P$. Let $\Delta$ denote a basis of positive roots defined by $P$. The non-compactness of $\FF_0$ is equivalent to $G$ being not anisotropic (\cite[Corollary 12.4]{BHC}), i.e., $T$ is non-trivial. The restriction of the representation $V$ of $G$ to the torus $T$ breaks up into a direct sum of characters. Let $U_0$ denote the set of all occurring characters (with multiplicity) so that as a representation of $T$, we have $$V\cong\bigoplus_{\chi\in U_0}\chi.$$
For any $v\in V$ and any $\chi\in U_0$, we denote by $v(\chi)$ the $\chi$-isotypical component of $v$. For any subset $U$ of $U_0$, we say that an element $v\in V(F)$ is $U$-\emph{non-generic} if there exists $g\in G(F)$ such that $(gv)(\chi) = 0$  for all $\chi\in U$. 
Suppose there are subsets $U_1,\ldots,U_n$ of $U$ such that if $v\in V(F)$ is $U_i$-non-generic for some $i=1,\ldots,n$, then $v$ is non-generic.

For any set $U$ of characters of $T$, we write $\Q[U]$ for the group algebra of $U$. An element $\sum_{\chi\in U}a_\chi\,\chi\in\Q[U]$ is in $\Q^{\geq0}[U]$ (resp. $\Q^{\leq0}[U]$, $\Q^{<0}[U]$) if $a_\chi\geq0$ (resp. $a_\chi\leq0$, $a_\chi<0$) for all $\chi\in U.$ For any $\pi=\sum_{\chi\in U_0}a_\chi\,\chi\in\Q[U_0]$, let $\deg(\pi)=\sum_{\chi\in U_0} a_\chi$ denote the degree of $\pi$. We follow the convention that $\deg 0 = -\infty$. 

Since $G$ is semisimple, every character of $T$ is inside $\Q[\Delta]$. For $\pi_1,\pi_2\in\Q[\Delta]$, we say $\pi_1\geq\pi_2$ if $\pi_1-\pi_2\in\Q^{\geq0}[\Delta].$ For any set $U$ of characters of $T$ and $\pi\in\Q[\Delta]$, we write $U\geq \pi$ if there exists $\pi'\in\Q^{\geq0}[U]$ with $\deg(\pi')=1$ such that $\pi'\geq\pi$. If $U,U'$ are two sets of characters of $T$, we write $U\geq U'$ if $U\geq \chi$ for every $\chi\in U'$. Otherwise, we write $U\not\geq U'$. A priori we have $U\geq U'$ if $U\supset U'$. We say a subset $U\subset U_0$ is \emph{saturated} if $U$ contains all the sets $U'\subset U_0$ such that $U\geq U'$. 

We prove the following result.

\begin{theorem}\label{prop:maintransference}
Suppose the following conditions are satisfied:
\begin{itemize}
\item[{\rm 1.}] $\prod_{\fP\notin M_\infty}\lambda_\fP = 0$ where $\lambda_\fP$ denotes the $\fP$-adic density of non-generic elements in $V(\cO)$ for every $\fP\notin M_\infty$;
\item[{\rm 2.}] for any saturated subset $U\subset U_0$ that does not contain $U_i$ for any $i=1,\ldots,n$, there exists some $\pi_U\in \Q^{\geq0}[U_0-U]$ with $\deg(\pi_U)<\#U$ such that $$\pi_U+\delta+\sum_{\chi\in U_0-U}\chi \in \Q^{<0}[\Delta];$$
\item[{\rm 3.}] for any $\alpha\in\Delta$, there exists some $\pi_\alpha\in\Q^{\leq0}[U_0]$ such that $\alpha\leq\pi_\alpha$.
\end{itemize}
Then AXIOM: Counting at Infinity II is satisfied.
\end{theorem}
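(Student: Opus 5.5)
We follow the averaging and cutting-off-the-cusp method of \cite{BS2}, carried out over $F_\infty$. First we use reduction theory (\cite{Sp}, \cite{BHC}) to write a Siegel set containing $\FF_0$: $\FF_0\subset \omega\, T'\, K$. Here $\omega$ is a compact subset of the unipotent radical $N(F_\infty)$ of $P$ times the anisotropic part (a finite union of translates, commensurability allowing this). $K$ is a maximal compact subgroup of $G(F_\infty)$. $T'=\{t\in T(F_\infty): |\alpha(t)|\le c \text{ for all }\alpha\in\Delta\}$. Since $\FF_0$ is only a fundamental domain up to finite overlap, the resulting count is an upper bound with bounded multiplicity, which suffices. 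Next we average: for a fixed compact open $G_1\subset G(F_\infty)$ of positive measure, we write
\[
N_{m_\infty}(V_0,G_0,X)\approx \frac{1}{\Vol(G_1)}\int_{g\in \FF_0}\#\{gG_1R(X)\cap V_0^{\gen}\}\,dg .
\]
We then use the Iwasawa decomposition $dg = \delta(t)^{-1}\,dn\,dt\,dk$, where $\delta$ is the modular character (the sum of positive roots in the additive notation of the statement). The region $B=G_1R(X)$ is compact and homogeneously expanding by AXIOM: Counting at Infinity I. We also impose the balancing of the $t_\fP$ across places $\fP\in M_\infty$ as in Proposition~\ref{prop:Sigma} and Remark~\ref{rem:S1}. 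Choosing the torus slice so that $|t|_\fP^{1/\deg\fP}$ are comparable, Proposition~\ref{prop:Davenport} applies uniformly.

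The argument then has three steps. \textbf{Step 1 (main body).} Using Proposition~\ref{prop:Davenport}, the lattice count in $nt\,B$ equals the volume plus error bounded by projections. Integrating the volume over $\FF_0$ gives $\nu^{m_\infty}_{\infty,0}(\FF_0.R(X))$. Where all characters $\chi(t)$ scaled by $X^{1/d}$ are large, the projection errors are $O(X^{\ord(H)-\eta})$ times a convergent integral. \textbf{Step 2 (slicing the cusp).} Where some coordinates $v(\chi)$ are forced to be $0$ because $|\chi(t)|X^{1/d}\ll 1$, let $U$ be the set of such $\chi$. Condition 3 ensures that when all $\chi\in U_0$ have $|\chi(t)|$ large the region is of bounded shape. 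It also lets us enlarge $U$ to its saturation: if $U\ge\chi'$ and all $v(\chi)$ with $\chi\in U$ vanish because $\chi(t)$ is small, then $|\chi'(t)|\le \prod|\chi(t)|^{a_\chi}$ times bounded factors, so $v(\chi')$ vanishes as well. Thus it suffices to handle saturated $U$. If $U\supset U_i$ for some $i$, every such $v$ is non-generic and contributes nothing. Otherwise we bound the number of integral points with $v(\chi)=0$ for $\chi\in U$ and $v(\chi)\neq0$ otherwise by $\prod_{\chi\notin U}X^{1/d}|\chi(t)|$. Since $v(\chi)\ne 0$ forces this product to be $\gg 1$, we may multiply by $\prod$ of the extra factors raised to $\pi_U$. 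Integrating against $\delta(t)^{-1}dt$ over the cone, Condition 2 ($\pi_U+\delta+\sum_{\chi\notin U}\chi\in\Q^{<0}[\Delta]$) makes the integral converge. The exponent of $X$ is $(\#U_0-\#U+\deg\pi_U)/d<\dim V/d=\ord(H)$, so the total is $o(X^{\ord(H)})$. \textbf{Step 3 (non-generic elements in the main body).} Condition 1, via Proposition~\ref{prop:cong} and a sieve over finitely many primes, shows that non-generic points in the main body are $o(X^{\ord(H)})$. The remaining volume difference between the truncated and the full $\FF_0.R(X)$ is small, since the cusp has small volume.

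We expect the main obstacle to be Step 2 over a general $F$. The torus $T(F_\infty)$ is a product over places, the slicing must be done place-by-place, and the condition "$v(\chi)=0$ is forced" now means that the adelic box $\prod_\fP$ contains no nonzero element of the lattice $V_0(\chi)$, i.e.\ $\prod_{\fP}|\chi(t_\fP)|X^{1/d}\ll1$ after balancing. We would handle this with the successive-minima remark after Proposition~\ref{prop:cong} together with the balanced choice of Siegel set. In the function field case the integrals over $T'$ become sums over a discrete cone, and convergence follows from the same linear inequalities.
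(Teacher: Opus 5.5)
Your overall route is the paper's: a Siegel set from Springer's reduction theory, averaging over a compact ball, Davenport on the main body with Condition~3 for precompactness and Condition~1 for non-generic points, and cusp slices indexed by saturated $U$ with Condition~2 giving the exponent $\#U_0-\#U+\deg\pi_U<\dim V$. However, as you have set it up, the key step fails because the Siegel cone points the wrong way.

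You take $\FF_0\subset\omega T'K$ with $\omega$ in the unipotent radical $U_P(F_\infty)$ and $T'=\{t:|\alpha(t)|\le c\ \forall\alpha\in\Delta\}$. That is Springer's normalization, which is adapted to $G(F_\infty)/G(\cO)$. For the left quotient $G_0\backslash G(F_\infty)$ with $U_P$ on the left, you must invert, and the paper says so explicitly. The cone is then $T(c)=\{t:|\alpha(t)|\ge c\ \forall\alpha\in\Delta\}$, refined to the balanced $T(c,c')$. The Haar density is $|\delta(t)|$ with $\delta\in\Q^{<0}[\Delta]$; this $\delta$, the inverse of your modular character, is the one appearing in Condition~2.

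On your cone, three things break:
\begin{itemize}
\item $\mathrm{Ad}(t^{-1})$ expands $U_P$, so $t^{-1}\omega t$ is unbounded. You therefore cannot write $ntkB\subset tC_1B$ with $C_1$ compact, and that inclusion is what gives $|v(\chi)|\ll X^{1/d}|\chi(t)|$.
\item The inequalities $|\chi'(t)|\ll|\pi'(t)|$ for $\pi'\ge\chi'$ (used for saturation) and $|\alpha(t)|\ll|\pi_\alpha(t)|$ (used with Condition~3) require $|\beta(t)|\ge c$ for $\beta\in\Delta$. They fail when $|\beta(t)|\le c$.
\item A character in $\Q^{<0}[\Delta]$ is not integrable on $\{|\alpha(t)|\le c\}$. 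Condition~2 then gives divergence rather than convergence; indeed $\omega T'K$ has infinite Haar measure.
\end{itemize}
Once you invert the cone, your Steps~1--3 coincide with the paper's proof.

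There are also two smaller corrections.

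First, the passage to saturated $U$ comes from $t\in T(c)$ and the definition of $\ge$, not from Condition~3. The ``bounded factors'' there matter: they only give $X^{1/d}|\chi'(t)|\le C\max_{\chi\in U}X^{1/d}|\chi(t)|$ for some constant $C\ge1$ depending on $c$. So the set of characters forced to vanish at a given $t$ need not itself be saturated. One fix is to use the thresholds $\theta_0C^{-j}$ for $0\le j\le\#U_0+1$, where $\theta_0$ is the vanishing threshold (the paper's $\kappa c_1^{-1}$). By pigeonhole, some interval $[\theta_0C^{-j-1},\theta_0C^{-j})$ contains none of the $\#U_0$ numbers $X^{1/d}|\chi(t)|$. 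Then the set of $\chi$ with $X^{1/d}|\chi(t)|<\theta_0C^{-j-1}$ is saturated and forced to vanish, and every other coordinate has $X^{1/d}|\chi(t)|\gg1$. The paper's slices $T_U$ need the same care in order to cover the cusp.

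Second, in each slice you should bound all lattice points by Davenport's lemma, giving $\ll\prod_{\chi\notin U}X^{1/d}|\chi(t)|$, not only those with $v(\chi)\ne0$ off $U$. The lower bound $X^{\deg\pi_U/d}|\pi_U(t)|\gg1$ then comes from the definition of the slice, not from the nonvanishing of particular coordinates.
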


\begin{remark}
    We note also that all three conditions in Theorem \ref{prop:maintransference} are independent of the choice of the height function. Moreover, if the representation $(G,V)$ over $\cO$ is base changed from $\Z$ and the group $G$ is split over $\Z$, then Conditions 2 and 3 do not depend on the field $F$. If in addition $F$ is a number field, then Condition 1 follows if one proves that there is a positive absolute constant $c$ such that for a density $1$ set of primes $p$, the $p$-adic density $\lambda_p$ of non-generic elements in $V(\Z)$ satisfies $\lambda_p < 1 - c$, for the same bound is then true for any degree $1$ prime $\fP$ of $F$ over $p$.
\end{remark}

\begin{remark}
    If we change Condition 1 to $\lambda_\fP < 1 - c$ for some absolute constant $c$, for all but finitely many $\fP$, then a power saving error term can be obtained, e.g., using the large sieve (see \cite[Proposition 4.3]{BSWsqf2}).
\end{remark}

We first recall some results from reduction theory (\cite{Sp}). For any positive constants $c$ and $c'$, define:
\begin{eqnarray*}
 T(c) &=& \{t=(t_w)_{w\in M_\infty}\in T(F_\infty) : |\alpha(t)| \geq c,\forall \alpha\in \Delta\},\\
 T(c,c') &=& \{t\in T(c) : \frac{|\alpha(t_w)|_w^{1/\deg w}}{|\alpha(t_{w'})|_{w'}^{1/\deg w'}} \leq c',\forall\alpha\in\Delta,\forall w,w'\in M_\infty\}.
\end{eqnarray*}
Then by \cite[Remark 2.2]{Sp} there exists positive real numbers $c,c'$, a compact subset $N$ of the group of $\F_\infty$-points of the unipotent radical $U_P$ of $P$, and a compact subgroup $K$ of $G(F_\infty)$ such that $$G(F_\infty) = G(\cO)NT(c,c')K.$$
We remark that in \cite{Sp}, the set $T(c)$ is defined by $|\alpha(t)| \leq c.$ Since we want a fundamental domain for the left action of $G(\cO)$ on $G(F_\infty)$, we need to apply inverses to the results of \cite{Sp}. The extra parameter $c'$ comes from computing $T(\cO)\backslash T(c)$ using the fact that the image $\Lambda$ of $\cO^\times$ in $\R^{|M_\infty|}$ under the map $t\mapsto (\log |t_w|_w)$ is a lattice of rank $|M_\infty|-1$ in the hyperplane $H:x_1+\cdots+x_v = 0$ with compact quotient $H/\Lambda$. The factors $1/\deg w$ and $1/\deg w'$ are not necessary if one does not need the optimal power-saving error term; see Remark \ref{rem:S1}.

Since $G_0$ is commensurable with $G(\cO)$, we can construct a fundamental domain $\FF_0$ for the action of $G_0$ on $G(F_\infty)$ that is contained in the finite union
\begin{equation}\label{eq:fundamental}
 \FF_0 \subset \bigcup_{j=1}^l \alpha_j N T(c,c') K,
\end{equation}
where $\alpha_j\in G(F)$.

By writing $V_0$ as a finite disjoint union of subsets over which the weight function $m_\infty$ is constant, it suffices to verify AXIOM: Counting at Infinity II when $m_\infty=1$. Moreover, by AXIOM: \localCondition, we may further assume that $\#\Stab_{G(F_\infty)}(v) = m$ is constant for $v\in V_0$. Fix some open ball $C_0\subset G(F_\infty)$ of finite nonzero measure. Then we have
\begin{eqnarray*}
 N_{m_\infty}(V_0,G_0,X) &=& \frac{1}{m}\#\{\FF_0.R(X) \cap V_0^\gen\}\\
&=& \frac{1}{m\tau_\infty(C_0)}\int_{C_0} \#\{\FF_0g.R(X) \cap V_0^\gen\}\,d\tau_\infty(g)\\
&=& \frac{1}{m\tau_\infty(C_0)}\int_{\FF_0} \#\{hC_0R(X) \cap V_0^\gen\}\,d\tau_\infty(h),
\end{eqnarray*}
where the last line follows as in \cite[Theorem 2.5]{BS2}. Without loss of generality, we assume that $l=1$ in the finite union \eqref{eq:fundamental}. That is, $\alpha^{-1}\FF_0=\FF\subset N T(c,c') K'$ for some $\alpha\in G(F)$. Using the left-invariance of the Haar measure, we have
$$N_{m_\infty}(V_0,G_0,X) = \frac{1}{m\tau_\infty(C_0)}\int_{\FF} \#\{hC_0R(X) \cap \alpha^{-1}V_0^\gen\}\,d\tau_\infty(h)$$
Since $T(c,c')$ is non-compact, we break up the above integral into two pieces: the main body and the cusp.

Recall that $R(X) = [1,X^{1/(d[F:\Q])}].R_1$ when $F$ is a number field and $R(X)=\Lambda_XR_\lambda$ when $F$ is a function field, where $\Lambda_X\in F$ and $\lambda$ belongs to a fixed finite set. We set $\Lambda_X = X^{1/(d[F:\Q])}$ when $F$ is a number field. The Haar measure $d\tau_\infty$ is given by $$d\tau_\infty =|\delta(t)|\, dud^\times tdk$$ up to a fixed constant, where $\delta \in \Q^{<0}[\Delta]$ is obtained from the action of $T$ on the unipotent radical $U_P$ of $P$, and $dk,d^\times t,du$ are Haar measures on $K,T(F_\infty),U_P(F_\infty)$, respectively. Note that if $t\in T(c)$ and $N\subset U_P(F_\infty)$ is compact with absolutely bounded coordinates, then so is $t^{-1}Nt$. That is, there exists a compact set $N'\subset U_P(F_\infty)$ independent of $t$ containing all of $t^{-1}Nt$ as $t$ varies in $T(c)$. Let $C_1$ denote the compact set $N'K'C_0.$ Then there is a constant $c_1>0$ depending on $C_1$ such that for any $v\in tC_1R(X)$ and $\chi\in U_0$, we have
$$|v(\chi)| \leq c_1 |\Lambda_X||\chi(t)|.$$

We explain the meaning of saturated subsets (defined just above Theorem \ref{prop:maintransference}). For any $\pi=\sum_{\chi\in U_0}a_\chi\,\chi\in\Q[U_0]$, we write
$$|\pi(t)| = \prod_{\chi\in U_0} |\chi(t)|^{a_\chi}.$$ 
Suppose $U$ is a subset of $U_0$. Suppose $Y>0$ is a real number and $t\in T(c,c')$ such that $|\chi(t)| < Y$ for all $\chi\in U$. Then for any $\pi\in \Q[\Delta]$ with $U\geq\pi$, we have $$|\pi(t)| \ll_{c,U} Y.$$

Since $V_0$ is commensurable with $V(\cO)$, there exists a positive constant $\kappa$ such that for any $v\in \alpha^{-1}V_0$ and $\chi\in U_0$, we have either $v(\chi) = 0$ or $|v(\chi)| > \kappa.$ For any subset $U$ of $U_0$, define $V(F_\infty)(U)$ by $$V(F_\infty)(U) = \{v\in V(F_\infty):|v(\chi)| < \kappa,\forall \chi\in U\}.$$
For any saturated subset $U$ of $U_0$, let $T_U$ be the subset of $T(c,c')$ consisting of $t$ such that:
\begin{enumerate}
    \item for any $\chi\in U_0$ such that $U\geq \chi$, we have $|v(\chi)| < \kappa$ for any $v\in tC_1R(X)$;
    \item for any $\chi\in U_0$ such that $U\not\geq \chi$, we have $|v(\chi)| \geq \kappa$ for some $v\in tC_1R(X)$.
\end{enumerate}
In light of the above, we see that if $t\in T_U$, then
$$|\Lambda_X||\chi(t)| <\kappa c_1^{-1} \mbox{ for all }\chi\in U\mbox{ and }|\Lambda_X||\chi(t)|\geq \kappa c_1^{-1}\mbox{ for all }\chi\notin U.$$
We define $\FF_U = \FF\cap NT_UK'$. The point here is that if $h\in \FF_U$, then for any $v\in hG_0R(X) \cap \alpha^{-1}V_0$ and any $\chi\in U$, we have $v(\chi) = 0$. As a consequence, if $U\supset U_i$ for some $i = 1,\ldots,n$, then any $v\in hG_0R(X) \cap \alpha^{-1}V_0$ is non-generic.

\begin{remark}
    Either one of $V(F_\infty)(U)$, $T_U$, or $\FF_U$ can be referred to as the \emph{cusp associated to $U$}, with $V(F_\infty)(\emptyset)$, $T_\emptyset$, or $\FF_\emptyset$ being the \emph{main body}.
\end{remark}

We now estimate the contribution from $$\int_{\FF_U} \#\{hC_0R(X) \cap \alpha^{-1}V_0^\gen\}\,d\tau_\infty(h)$$
for a saturated subset $U$. Consider first $U = \emptyset$. We show first that $T_\emptyset$ is pre-compact. By the definition of $T(c,c')$, it suffices to give an upper bound for $|\alpha(t)|$ for each $\alpha\in\Delta$ and every $t\in T_\emptyset.$ Fix any $t\in T_\emptyset$ and $\alpha\in\Delta$. Condition 3 gives some $\pi_\alpha\in\Q^{\leq0}[U_0]$ such that $\alpha\leq\pi_\alpha$. Then $$|\alpha(t)| \ll |\pi_\alpha(t)| \ll |\Lambda_X|^{-\deg(\pi_\alpha)}$$
where the implied constant depends on $\alpha$ and $\pi_\alpha$. Therefore, if for any $\alpha\in\Delta$, there exists some $\pi_\alpha\in\Q^{\leq0}[U_0]$ such that $\alpha\leq\pi_\alpha$, then we have an upper bound for $|\alpha(t)|$ for every $\alpha\in\Delta$.

We can now apply Davenport's lemma (Proposition \ref{prop:Davenport}) to estimate $\#\{hC_0R(X) \cap \alpha^{-1}V_0\}$ for $h\in\FF_\emptyset$. In this region, since no coordinate is forced to be small, we have
\begin{eqnarray*}
 \int_{\FF_\emptyset} \#\{hC_0R(X) \cap \alpha^{-1}V_0\}\,d\tau_\infty(h) &=&\int_{\FF_\emptyset} \Vol_{\alpha^{-1}V_0}\{hC_0R(X)\}\,d\tau_\infty(h)+ o(X^{\ord(H)})\\
&=& \int_{\FF_\emptyset} \Vol_{V_0}\{hC_0R(X)\}\,d\tau_\infty(h)+ o(X^{\ord(H)})
\end{eqnarray*}
where the last equality follows since the Haar measure on $V(F_\infty)$ is $G(F_\infty)$-invariant. Here we use $\Vol_\L$, for a lattice $\L$ commensurable with $V(\cO)$, to denote the Euclidean volume normalized so that the covolume of $\L$ is $1$, and we recall that $\Vol_{V_0} = \nu_{\infty,0}$. Note here, 
$$\Vol_{V_0}(hC_0R(X)) \asymp \prod_{\chi\in U_0}|\Lambda_X||\chi(t)| = |\Lambda_X|^{\#\dim V} \asymp X^{\ord(H)},$$
using the $G(F_\infty)$-invariance of the measure on $V(F_\infty)$  to conclude that $\prod_{\chi\in U_0}|\chi(t)| = 1$. 

By Condition 2, we know the non-generic points are negligible in the main body:
$$\int_{\FF_\emptyset} \#\{hC_0R(X) \cap \alpha^{-1}V_0\}\,d\tau_\infty(h) = \int_{\FF_\emptyset} \#\{hC_0R(X) \cap \alpha^{-1}V_0^\gen\}\,d\tau_\infty(h) + o(X^{\ord(H)}).$$
Hence, in order to conclude that
$$\int_{\FF_\emptyset} \#\{hC_0R(X) \cap \alpha^{-1}V_0^\gen\}\,d\tau_\infty(h) = \int_{\FF} \Vol_{V_0}\{hC_0R(X)\}\,d\tau_\infty(h)+ o(X^{\ord(H)}),$$
it remains to show that the cuspidal region has negligible volume. This amounts to proving that $$\int_{t\in T(c,c')-T_\emptyset} |\Lambda_X|^{\dim V}|\delta(t)|d^\times t = o(|\Lambda_X|^{\dim V}).$$
Since $\delta\in \Q^{<0}[\Delta]$, it suffices to show that there is a positive real number $c_2$ such that for any $t\in T(c,c')-T_\emptyset$, there is some $\alpha\in\Delta$ with $|\alpha(t)| > X^{c_2}$. 
Let $\{\chi_1,\ldots,\chi_m\}\subset U_0$ be a complete set of minimal weights. Note $m>1$ if and only if the representation $V$ of $G$ is non-generic. 
Fix any $t\in T(c,c')-T_\emptyset$. Then $|\Lambda_X||\chi_i(t)|<c_3$ for some absolute constant $c_3$ for some $i=1,\ldots,m$. In particular, $\chi_i$ is nontrivial. Since $\chi_i$ is a minimal weight, we have that $\chi_i\in\Q^{\leq0}[\Delta].$ Hence a lower bound of $|\chi_i^{-1}(t)|$ of the form $c_3^{-1}|\Lambda_X|$ gives a lower bound of $|\alpha(t)|$, for some $\alpha\in \Delta$, of the form $(c_3^{-1}|\Lambda_X|)^{-1/\deg(\chi_i)}$.

We now deal with $U$ non-empty. For $t\in T_U$, Davenport's lemma (Proposition \ref{prop:Davenport}) gives
$$\#tC_1R(X) \cap \alpha^{-1}V_0 = O\Big(\prod_{\chi\in U_0-U}|\Lambda_X||\chi(t)|\Big).$$
Hence, it suffices to bound
$$I(U,X) = |\Lambda_X|^{\#U_0 - \#U}\int_{T_U}\Big(\prod_{\chi\in U_0-U}|\chi(t)|\Big)|\delta(t)|\,d^\times t.$$
By Condition 2, there exists $\pi_U \in \Q^{\geq 0}[U_0-U]$ with $\deg(\pi_U) < \#U$ such that 
$$\pi_U+\delta+\sum_{\chi\in U_0-U}\chi \in \Q^{<0}[\Delta].$$
Then for any $t\in T_U$, we have
$$|\Lambda_X|^{\deg\pi_U}|\pi_U(t)| \gg 1\qquad\mbox{and}\qquad |\pi_U(t)|\Big(\prod_{\chi\in U_0-U}|\chi(t)|\Big)|\delta(t)|\ll 1.$$
Combining these two bounds gives
$$I(U,X) = O\big(|\Lambda_X|^{\#U_0 - \#U + \deg \pi_U}\big) = o(|\Lambda_X|^{\dim V}).$$
This completes the proof of Theorem \ref{prop:maintransference}.

\subsection{Verifying the uniformity axioms}\label{sec:uniformity}

In this section, we describe three methods that have been previously used to prove AXIOM: Uniformity Estimate for $S$ and for $V$, in the case $F=\Q$. We prove that these extend to the case of global fields without much change. The results of this section will suffice to prove all the uniformity estimates needed for our main theorems, with two exceptions. AXIOM: Uniformity Estimates for $S$ and for $V$ for the $2$-Selmer groups of elliptic curves are proved by hand in \S5. Furthermore, AXIOM: Uniformity Estimate for $S$ for monic degree-$n$ polynomials requires ``counting in the cusp'' techniques, and are proved by Oller \cite{Oller}.  

We begin by restating the axiom in sufficient generality to cover both AXIOM: Uniformity Estimate for $S$ and for $V$.
Let $G$ be an affine algebraic group with a representation $V$ defined over $\cO$. We refer to Section \ref{sec:general} for the definitions of the class group $\cl(G)$ and $V_\beta,G_\beta$ for any $\beta\in\cl(G)$. Let $H$ be a height function on $V(F)$. For any subgroup $G_0$ of $G(F)$ commensurable with $G(\cO)$ and any $G_0$-invariant subset $V_0$ of $V(F)$ commensurable with $V(\cO)$, let $N(V_0,G_0,X)$ denote a counting function for the number of $G_0$-orbits in $V_0$ with height bounded by $X$. For AXIOM: Uniformity Estimate for $V$, $N(V_0,G_0,X)$ will count only the generic orbits. 
For AXIOM: Uniformity Estimates for $S$, we may take $V$ to be $S$ and $G$ to be trivial, and where $N(V_0,G_0,X)$ counts all orbits. 
Define the order $\ord(H)$ of $H$ to be the infimum of real numbers $d$ such that $N(V_0,G_0,X)=O(X^d).$ Note this definition is independent of $G_0$ and $V_0$.

Let $B_\fP\subset V(\cO_\fP)$ denote a set of elements that we want to sieve out. Define
$$W_{\beta,\fP} = \{v\in V_\beta \colon \beta_\fP v  \in B_\fP\}.$$
We want to obtain the following uniformity estimate, namely, AXIOM: Uniformity Estimate for $V$:
%
For any positive real number $M,$ we have
\begin{equation}
\label{eq:uniform1}
 \sum_\beta N\Bigl(\bigcup_{N\fP> M}W_{\beta,\fP},G_\beta,X\Bigr) = O\Big(\frac{X^{\ord(H)}}{f(M)}\Big) + o(X^{\ord(H)}),
\end{equation}
where $f(M)$ is a function of $M$ that approaches $\infty$ as $M$ approaches $\infty$. If this holds, we will also say AXIOM: Uniformity Estimate is satisfied for $(V,G,\{B_\fP\},N).$
\hfill$\Box$\vspace{2 ex}

\subsubsection*{The geometric sieve}

We view $V$ as the affine scheme $\A_{\cO}^{\dim(V)}$ over $\cO$. Suppose there is a closed subscheme $Y$ of $V$ of codimension $k\geq 2$ such that for any finite prime $\fP$, any element of $B_\fP$ reduces modulo $\fP$ to an element of $Y(k(\fP))$, where $k(\fP)$ denotes the residue field of $\fP.$ In this case, we can use the techniques of \cite{Bgeosieve}. The following two results are the natural generalizations to global fields of their counterparts in \cite{Bgeosieve} where we replace the use of Davenport's result on the number of lattice points in some compact region in $\R^n$ by its function field analogue if $F$ is a function field.

\begin{lemma}\label{lem:Lem5F}
 Let $B$ be a compact region in $F_\infty^n$ and let $r\in \R$ if $F$ is a number field and $r\in F$ if $F$ is a function field. Let $Y$ be a closed subscheme of $\A_{\cO}^n$ of codimension $k\geq 0.$ Then we have,
\begin{equation}
 \#(rB\cap Y(F_\infty) \cap \cO^n) = O(|r|^{n-k}),
\end{equation}
where the implied constant depends only on $B$, $Y$ and $F$.
\end{lemma}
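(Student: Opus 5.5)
We prove this by induction on $n$, following the argument of \cite[Lemma 3.1]{Bgeosieve}. Proposition \ref{prop:cong} and the Davenport bound (Proposition \ref{prop:Davenport}) take the place of the classical lattice point count over $\R^n$. If $k=0$, the claim is the trivial bound $\#(rB\cap\cO^n)=O(|r|^n+1)$, and this follows from Proposition \ref{prop:cong} with $I=\cO$. (For the statement to make sense we tacitly assume $|r|\gg 1$; equivalently we read the bound as $O(|r|^{n-k}+1)$.) So assume $k\ge 1$.

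First we reduce to the case where $Y$ is cut out by a single nonzero polynomial in a good shape. Enlarging $Y$ only makes the count larger, so we may replace $Y$ by a suitable complete intersection containing it. After a linear change of coordinates over $F$ (which changes $B$ and the lattice only by bounded amounts), we may assume:
\begin{itemize}
\item[(i)] there is $h\in\cO[x_1,\ldots,x_n]$, not identically zero on any fibre considered below, that vanishes on $Y$ and is monic up to a nonzero constant in $x_n$, by Noether normalization;
\item[(ii)] the projection $\pi\colon\A^n\to\A^{n-1}$ forgetting $x_n$ sends $Y$ to a closed subscheme $\overline{\pi(Y)}$ of codimension $k-1$ in $\A^{n-1}$, and the fibres of $Y\to\overline{\pi(Y)}$ are finite of bounded degree.
\end{itemize}
For (ii), $Y$ has codimension $k$ in $\A^n$; after a generic linear coordinate change, $\pi|_Y$ is finite onto its image, and that image has dimension $n-k$, which is codimension $k-1$ in $\A^{n-1}$. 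If a fibre were infinite, one would split off the cylinder part as in \cite{Bgeosieve}.

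We then count by fibres. A point $a=(a',a_n)\in rB\cap Y(F_\infty)\cap\cO^n$ has $a'\in\pi(rB)\cap\overline{\pi(Y)}\cap\cO^{n-1}$. Here $\pi(rB)\subset r\pi(B)$ with $\pi(B)$ compact. By the induction hypothesis there are $O(|r|^{(n-1)-(k-1)})=O(|r|^{n-k})$ such $a'$. Above each $a'$ the fibre of $Y$ is finite of bounded degree, so there are $O(1)$ choices of $a_n$. This gives the bound $O(|r|^{n-k})$.

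The main obstacle is making the finite-fibre projection uniform, including in positive characteristic, where Noether normalization needs care. If a generic projection is not finite on all of $Y$, I would stratify. On the locus $Y_1\subset\overline{\pi(Y)}$ where the fibre is positive dimensional (so the whole line lies in $Y$), the set $Y_1$ has codimension at least $k$ in $\A^{n-1}$. Induction then gives $O(|r|^{n-1-k})$ points $a'$, each with $O(|r|)$ lattice points on its line in $rB$ by the $k=0$ case in dimension $1$. This again totals $O(|r|^{n-k})$. The implied constants depend only on $B$, $Y$ and $F$, through the degrees and the coordinate change. In the function field case, the only change is that ``$r$'' scaling and volumes are measured by $|r|=\prod_{w}|r|_w$, and the one-dimensional count is the trivial bound from Proposition \ref{prop:cong}.
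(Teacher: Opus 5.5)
Your proposal is correct and takes essentially the same route as the paper. The paper's proof just says that the inductive argument of \cite[Lemma 5]{Bgeosieve} carries over once the trivial count $\#(rB\cap\cO^n)=O(|r|^n)$ replaces the count over $\Z^n$; this is exactly your $k=0$ input via Proposition \ref{prop:cong} with $I=\cO$. Your hedging about positive characteristic is unnecessary, since $F$ is infinite: choosing $v\in F^n$ at which the top-degree part of $h$ does not vanish already makes $h$ monic in $x_n$ after a linear change of variables. The stratification fallback is therefore not needed, though it is also valid.
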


\begin{proof}
 The same proof of \cite[Lemma 5]{Bgeosieve} carries over. The only difference is that $\#(rB \cap \cO^n) = O(|r|^n).$
\end{proof}

\begin{theorem}\label{thm:Th7F}
 Let $B$ be a compact region in $F_\infty^n$ and let $r\in \R$ if $F$ is a number field and $r\in F$ if $F$ is a function field. Let $Y$ be a closed subscheme of $\A_{\cO}^n$ of codimension $k\geq 2.$ Let $M$ be a positive real number. Then we have,
\begin{eqnarray}
 \nonumber&&\#\{a\in rB\cap \cO^n: a \,(\mathrm{mod}\,\fP) \in Y(k(\fP))\mbox{ for some prime }\fP\notin M_\infty\mbox{ with }N\fP>M\} \\
 \label{eq:estF}&=& O\left(\frac{|r|^n}{M^{k-1}\log M} + |r|^{n-k+1}\right),
\end{eqnarray}
where the implied constant depends only on $B$, $Y$ and $F$.
\end{theorem}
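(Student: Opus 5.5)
We follow the proof of \cite[Theorem 3]{Bgeosieve}, splitting the primes $\fP\notin M_\infty$ with $N\fP>M$ into two ranges. The cutoff is $N\fP\le |r|$ versus $N\fP>|r|$, where $|r|=\prod_{w\in M_\infty}|r|_w$ as in Proposition \ref{prop:cong}. Throughout, the implied constants may depend on $B$, $Y$ and $F$.

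\emph{Small primes.} For a single prime $\fP$, apply Proposition \ref{prop:cong} with $I=\fP$. Since $Y$ has codimension $k$, the Lang--Weil type bound gives $\#Y(k(\fP))=O((N\fP)^{n-k})$, uniformly in $\fP$ because $Y$ is fixed over $\cO$. The number of $a\in rB\cap\cO^n$ with $a$ mod $\fP$ in $Y(k(\fP))$ is therefore
\[
O\Big(\frac{|r|^n}{(N\fP)^k}+(N\fP)^{n-k}\Big).
\]
When $N\fP\le|r|$, the first term dominates. Summing over $M<N\fP\le|r|$, we use the prime ideal theorem (or its function field analogue, the Weil bound) to count primes: the number of primes of norm in a dyadic range $[Y,2Y]$ is $O(Y/\log Y)$. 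Since $k\ge 2$, the resulting sum $\sum_{N\fP>M}(N\fP)^{-k}$ is $O(M^{1-k}/\log M)$. This gives the first term $|r|^n/(M^{k-1}\log M)$ of \eqref{eq:estF}. In the function field case the norms are powers of $q$, but the same counting works.

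\emph{Large primes.} The main obstacle is the primes with $N\fP>|r|$, where we cannot sum over $\fP$ individually. We imitate Ekedahl's argument as used in \cite{Bgeosieve}. After a linear change of coordinates defined over $\cO$, which is harmless up to constants, we may assume $Y$ is contained in a closed subscheme cut out by two elements $f,g\in\cO[x_1,\ldots,x_n]$ that are coprime over $F$. We may also assume that for all but finitely many primes, the reduction of $Y$ has codimension $k$, and that the projection of $Y$ to the last $n-k+2$ coordinates (after fibering appropriately) is generically finite. We then slice $rB$ by fixing the first $n-k+1$ coordinates $b\in\cO^{n-k+1}$ of size $O(r)$; there are $O(|r|^{n-k+1})$ such tuples.

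For each fixed $b$, the restrictions $f_b,g_b$ are polynomials in the remaining $k-1\ge 1$ variables. Consider first the case $k=2$, where they are polynomials in one variable. For $b$ outside the locus $Z=\{\mathrm{Res}(f_b,g_b)=0\}$, any $a$ lying in $Y$ mod $\fP$ forces $\fP$ to divide the resultant $\mathrm{Res}(f_b,g_b)(a')$ evaluated at the relevant coordinates. This resultant is a nonzero element of $\cO$ of norm $O(|r|^{C})$, so it has $O(1)$ prime divisors of norm greater than $|r|$. Each such prime $\fP$ contributes $O(|r|/N\fP+1)=O(1)$ solutions in the final coordinate; this count again uses Proposition \ref{prop:cong} in one variable. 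Summing over $b$ gives $O(|r|^{n-k+1})$. The points with $b$ in the exceptional locus $Z$, or with the resultant vanishing, lie on a subvariety of codimension at least $1$ inside the fibration. Lemma \ref{lem:Lem5F} bounds them by $O(|r|^{n-k+1})$ as well. The general case $k>2$ reduces to $k=2$ by replacing $Y$ with a codimension-$2$ subscheme containing it; for the large primes only the $|r|^{n-k+1}$-sized slicing is needed, so we instead fiber over $n-k+1$ coordinates and induct on $k$ exactly as in \cite{Bgeosieve}.

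The only global-field-specific inputs are the following: Proposition \ref{prop:cong} and Lemma \ref{lem:Lem5F} in place of Davenport's lemma; the prime ideal theorem for $F$; and the fact that a nonzero $x\in\cO$ has at most $\log|N x|/\log|r|$ prime factors of norm greater than $|r|$. The last fact follows from the product formula, since $\prod_{\fP\notin M_\infty}|x|_\fP^{-1}=\prod_{w\in M_\infty}|x|_w=O(|r|^C)$. Combining the two ranges yields \eqref{eq:estF}.
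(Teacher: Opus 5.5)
Your overall route matches the paper's. You split the primes at $N\fP=|r|$, and for $M<N\fP\le|r|$ you use Proposition~\ref{prop:cong} with a Lang--Weil bound and the prime ideal theorem. For $N\fP>|r|$ you run the Ekedahl argument of \cite{Bgeosieve} with Lemma~\ref{lem:Lem5F} replacing \cite[Lemma 3.1]{Bgeosieve}. The small-prime part is fine, and so is your $k=2$ large-prime argument. It does need the coordinate change to give some $f\in I(Y)$ a nonzero \emph{constant} leading coefficient in $x_n$, so that $f(b,x_n)\not\equiv 0 \bmod \fP$ and only $O(1)$ residue classes of $a_n$ survive.

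\textbf{The gap is the large-prime range when $k\ge 3$,} which is exactly where the exponent $n-k+1$ matters.
\begin{itemize}
\item Replacing $Y$ by a codimension-$2$ subscheme containing it only gives $O(|r|^{n-1})$.
\item Fixing the first $n-k+1$ coordinates $b$ and using only $f,g$ also fails as written. Two equations in the remaining $k-1\ge 2$ variables do not confine the fiber to $O(1)$ residue classes mod $\fP$.
\item For the exceptional $b$, Lemma~\ref{lem:Lem5F} gives $O(|r|^{n-k})$ choices of $b$. But each such $b$ carries $\asymp |r|^{k-1}$ lattice points in its fiber, for a total of $|r|^{n-1}$, not $|r|^{n-k+1}$. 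So your sentence ``Lemma~\ref{lem:Lem5F} bounds them by $O(|r|^{n-k+1})$'' is false for $k\ge 3$.
\end{itemize}

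The missing idea is to induct on $k$ by projecting away \emph{one} coordinate at a time.
\begin{itemize}
\item Take $f\in I(Y)$ with constant leading coefficient in $x_n$ as above. Let $\pi$ forget $x_n$, and let $Y_1\subset\A^{n-1}_\cO$ be cut out by $I(Y)\cap\cO[x_1,\ldots,x_{n-1}]$.
\item Since $\pi|_{Y_F}$ is finite, $Y_1$ has codimension $\ge k-1$. Moreover, $a \bmod \fP\in Y(k(\fP))$ implies $\pi(a)\bmod \fP\in Y_1(k(\fP))$.
\item If $\pi(a)\in Y_1(F)$, Lemma~\ref{lem:Lem5F} gives $O(|r|^{n-k})$ choices for $\pi(a)$ and $O(|r|)$ for $a_n$.
\item Otherwise some fixed generator $h$ of $I(Y_1)$ has $h(\pi(a))\ne 0$, with norm $\ll |r|^{\deg h}$. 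So only $O(1)$ primes with $N\fP>|r|$ can occur.
\item For each such $\fP$ not dividing the leading coefficient of $f$, $a_n$ lies in at most $\deg f$ residue classes. Each class contains $O(|r|/N\fP+1)=O(1)$ admissible values.
\item By the induction hypothesis applied to $Y_1$, there are $O(|r|^{(n-1)-(k-1)+1})=O(|r|^{n-k+1})$ such $\pi(a)$.
\end{itemize}
The base case $k=1$ is trivial. Your resultant argument is essentially the case $k=2$, with $Y_1$ replaced by the hypersurface $\mathrm{Res}_{x_n}(f,g)=0$.
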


\begin{proof}
 The estimate of the error term coming from primes $\fP$ with $N\fP>|r|$ follows as in the proof of \cite[Theorem 3.3]{Bgeosieve} with \cite[Lemma 3.1]{Bgeosieve} replaced by Lemma \ref{lem:Lem5F} above. The main term works the same way as well using Proposition \ref{prop:cong}. 
\end{proof}

\begin{theorem}\label{thm:geosieve}
 Suppose there is a closed subscheme $Y$ of $V$ of codimension $k\geq 2$ such that for all but finitely many finite primes $\fP$, any element of $B_\fP$ reduces modulo $\fP$ to an element of $Y(k(\fP))$. Assuming \emph{AXIOM: Counting}, then
\begin{equation}\label{eq:geosieve}
 \sum_\beta N(\bigcup_{N\fP> M}W_{\beta,\fP},G_\beta,X) = O\big(\frac{X^{\ord(H)}}{M^{k-1}\log M}\big) + o(X^{\ord(H)}).
\end{equation}
In particular, the desired uniformity estimate \eqref{eq:uniform1} holds.
\end{theorem}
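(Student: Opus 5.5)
The plan is to reduce Theorem \ref{thm:geosieve} to the lattice-point estimate of Theorem \ref{thm:Th7F} by the same decomposition of fundamental domains used in the proof of Theorem \ref{prop:maintransference}. Since $\cl(G)$ is finite, it suffices to treat a single $\beta$. Since $V_\beta$ and $V(\cO)$ are commensurable and $\beta_\fP\in G(\cO_\fP)$ for all but finitely many $\fP$, the condition $\beta_\fP v\in B_\fP$ becomes, for $N\fP>M$ with $M$ large, simply $v \bmod \fP\in Y(k(\fP))$. Enlarging $M$ to absorb the finitely many exceptional primes (those excluded in the hypothesis, those where $\beta_\fP\notin G(\cO_\fP)$, and those dividing the index of $V_\beta$ in a common superlattice) only changes the constants. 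So the task is to bound the number of generic $G_\beta$-orbits in $V_\beta$ of height at most $X$ whose representatives reduce into $Y$ modulo some prime $\fP$ with $N\fP>M$.

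Following the averaging argument, I would write this count as $\frac{1}{\tau_\infty(C_0)}\int_{\FF} \#\{hC_0R(X)\cap \alpha^{-1}V_\beta^{\gen}\cap W\}\,d\tau_\infty(h)$, where $W$ is the set to be sieved. Writing $h=ntk$ with $t\in T(c,c')$, I split $\FF$ into the main body $\FF_\emptyset$ and the cusps $\FF_U$ for saturated $U$. On the cusps, the proof of Theorem \ref{prop:maintransference} already shows, with no congruence condition imposed, that the total contribution of generic points is $o(X^{\ord(H)})$: either every point is non-generic, or Condition 2 bounds the integral $I(U,X)$ by $o(|\Lambda_X|^{\dim V})$. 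So the cusps go into the $o$ term. On the main body, $T_\emptyset$ is pre-compact, by Condition 3. Hence $tC_1R(X)$ ranges over $\Lambda_X B$ with $B$ lying in a fixed compact set $B_0\subset V(F_\infty)$. Also $\alpha^{-1}V_\beta$ is a lattice commensurable with $\cO^n$, and after a fixed linear change of coordinates over $F$, and passing to a finite-index sublattice to handle the finitely many bad primes, it becomes $\cO^n$ with $Y$ transported to a closed subscheme of the same codimension $k$.

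Applying Theorem \ref{thm:Th7F} with $r=\Lambda_X$ and the compact region $B_0$, the count for each $h\in\FF_\emptyset$ is $O\bigl(|\Lambda_X|^{n}/(M^{k-1}\log M)+|\Lambda_X|^{n-k+1}\bigr)$, uniformly in $h$. Since $|\Lambda_X|^n\asymp X^{\ord(H)}$ and $k\ge2$, the second term is $o(X^{\ord(H)})$. Integrating over the finite-measure set $\FF_\emptyset$ then gives \eqref{eq:geosieve}, and $f(M)=M^{k-1}\log M\to\infty$ yields \eqref{eq:uniform1}.

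The main obstacle is uniformity. The implied constant in Theorem \ref{thm:Th7F} must not depend on the varying region $tC_1R(X)$, so I need all these regions to lie inside a single scaled compact set $\Lambda_X B_0$. Counting over the larger set $\Lambda_X B_0$ is legitimate because we only need an upper bound. In the function-field case there is an extra step: $R_\lambda$ runs over a finite set of shapes and $\Lambda_X\in F$, so we take $B_0$ to be the union over these finitely many shapes. A second point to check is that the cusp estimate really was proved for all generic integral points, not just for the main term, so that restricting to $W$ costs nothing there. It was, since it bounds $\#tC_1R(X)\cap\alpha^{-1}V_0$ outright.
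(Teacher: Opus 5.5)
Your outline (averaging over $\FF$, splitting into the main body $\FF_\emptyset$ and the cusps $\FF_U$, disposing of the cusps as in the proof of Theorem~\ref{prop:maintransference}, and sieving the main body with Theorem~\ref{thm:Th7F}) is the route the paper intends. The paper defers to \cite[\S3.4]{Bgeosieve} and remarks that control of the main body via Condition~3 is the key input. However, the step you flag as the main obstacle rests on a false claim.

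The set $T_\emptyset$ depends on $X$. It consists of the $t\in T(c,c')$ with $|\Lambda_X||\chi(t)|\gg1$ for every $\chi\in U_0$. Condition~3 only gives $|\alpha(t)|\ll|\Lambda_X|^{-\deg(\pi_\alpha)}$, and $\deg(\pi_\alpha)<0$ (note $\pi_\alpha\neq0$ since $\pi_\alpha\geq\alpha$). So $T_\emptyset$ is pre-compact for each fixed $X$, but it exhausts the non-compact set $T(c,c')$ as $X\to\infty$. That is precisely why the paper can show that $T(c,c')\setminus T_\emptyset$ carries negligible volume. Hence no fixed compact $B_0$ satisfies $tC_1R(X)\subset\Lambda_XB_0$ for all $t\in T_\emptyset$.

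Deep in the main body, the boxes $tC_1R(X)$ have side lengths ranging from $O(1)$ up to a power of $|\Lambda_X|$. The constants in Lemma~\ref{lem:Lem5F} and Theorem~\ref{thm:Th7F} are genuinely not uniform over such boxes. For example, the number of lattice points of a box lying on a codimension-$k$ coordinate subspace is its volume divided by the product of the $k$ corresponding side lengths. This is far larger than $|\Lambda_X|^{\dim V-k}$ when those sides are short. So your bound ``uniformly in $h$'', and the integration over $\FF_\emptyset$ built on it, is unjustified.

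The repair is a second truncation independent of $X$. For a parameter $C$, put $T^{(C)}=\{t\in T(c,c'):|\alpha(t)|\leq C\mbox{ for all }\alpha\in\Delta\}$.
\begin{itemize}
\item For $t\in T_\emptyset\cap T^{(C)}$ you really do have $tC_1R(X)\subset\Lambda_XB_C$ with $B_C$ compact. So Theorem~\ref{thm:Th7F} applies there, with constants depending on $C$.
\item On $T_\emptyset\setminus T^{(C)}$, drop the congruence condition altogether. Every side length $|\Lambda_X||\chi(t)|$ is $\gg1$ there, so Proposition~\ref{prop:Davenport} gives $\#(tC_1R(X)\cap\alpha^{-1}V_\beta)\ll|\Lambda_X|^{\dim V}$ uniformly. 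Meanwhile $\epsilon(C):=\int_{T(c,c')\setminus T^{(C)}}|\delta(t)|\,d^\times t\to0$ as $C\to\infty$, because $\delta\in\Q^{<0}[\Delta]$.
\end{itemize}
Together with your cusp estimate, this gives $O_C\bigl(X^{\ord(H)}/(M^{k-1}\log M)\bigr)+O\bigl(\epsilon(C)X^{\ord(H)}\bigr)+o(X^{\ord(H)})$. Letting $C=C(M)\to\infty$ slowly yields \eqref{eq:uniform1}.

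To get the exact rate in \eqref{eq:geosieve}, you must also make the main-term constant independent of $C$. One way is to run the covering argument of Proposition~\ref{prop:cong} on each box $tC_1R(X)$ separately (each has volume $\asymp|\Lambda_X|^{\dim V}$), for primes of norm below its shortest side, instead of on the union $\Lambda_XB_C$.

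Two smaller points:
\begin{itemize}
\item Your cusp step uses Conditions~2 and~3 of Theorem~\ref{prop:maintransference}. These are not literally part of AXIOM: Counting and should be stated as hypotheses.
\item For an upper bound you must embed $\alpha^{-1}V_\beta$ in a free superlattice, or cover it by finitely many cosets of a free sublattice. Replacing it by a sublattice loses points.
\end{itemize}
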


We will give a generalization (Theorem \ref{thm:redsieve}) of Theorem \ref{thm:geosieve} in the next section. See \cite[\S3.4]{Bgeosieve} for the proof of Theorem \ref{thm:redsieve}.

\subsubsection*{The reduction sieve}

In this section, we present a method of checking AXIOM: Uniformity Estimate via the use a bigger group $G'$ to reduce the height of elements in $W_{\beta,\fP}.$

We now state the main result of this section, which reduces to Theorem \ref{thm:geosieve} when $G'=G$.

\begin{theorem}\label{thm:redsieve}
 Suppose there exists an affine algebraic group $G'$ over $\cO$ containing $G$ as a subgroup. Suppose $G'$ acts on $V$ such that its restriction to $G$ coincides with the original action of $G$ on $V$ and that $G'(F_\fP)$ preserves $V_P(F_\fP)$. Suppose the following conditions hold:
\begin{enumerate}
\item There exists an algebraic character $\delta:G'\rightarrow\bG_m$ that is trivial on $G$ and such that $|\delta(G'(\cO_\fP))|_\fP~=~1$ for any $($finite$)$ prime $\fP$ and that for any $g\in G'(F)$ and any $v\in V(F)$, $$H(gv) = \prod_{\fP\notin M_\infty}|\delta(g)|_\fP\, H(v).$$
 \item There exists $G$-invariant closed subschemes $Y_j$ of $V$ over $\cO$ of codimension $j$ for each $j=0,\ldots,\dim V$ and a positive constant $\eta$ such that for any $\fP\notin M_\infty$ and any $w_\fP\in B_\fP$, there exists a positive real number $\alpha(w_\fP)$, an element $g_\fP(w_\fP)\in G'(F_\fP)$ and an integer $k(w_\fP)=0,\ldots,\dim V$ such that
\begin{enumerate}
\item $g_\fP(w_\fP) w_\fP\in V(\cO_\fP)$;
\item the reduction of $g_\fP(w_\fP) w_\fP$ modulo $\fP$ is in $Y_{k(w_\fP)}(k(\fP))$;
\item $\ord(H)\alpha(w_\fP) + k(w_\fP) - 1\geq\eta$;
\item $|\delta(g_\fP)|_\fP\leq (N\fP)^{-a(w_\fP)}$.
\end{enumerate}
\item The fibers of the map $\pi_{\beta,\fP}$ induced by the above data as defined in \eqref{eq:reduction} below have sizes bounded above by some absolute constant $C_2$.
\end{enumerate}
Then assuming AXIOM: Counting, we have
\begin{equation}\label{eq:reductionsieve}
 \sum_\beta N(\bigcup_{N\fP> M}W_{\beta,\fP},G_\beta,X) = O\big(\frac{X^{\ord(H)}}{M^\eta\log M}\big) + o(X^{\ord(H)}).
\end{equation}
In particular, the desired uniformity estimate \eqref{eq:uniform1} holds.
\end{theorem}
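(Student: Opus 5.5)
The plan is to follow the strategy of \cite[\S3.4]{Bgeosieve}: construct the reduction map $\pi_{\beta,\fP}$, push the sieved elements into orbits of smaller height satisfying a codimension-$k$ congruence condition, and count those using AXIOM: Counting together with Proposition \ref{prop:cong} and Theorem \ref{thm:Th7F}.

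\textbf{Construction of $\pi_{\beta,\fP}$.} Fix $\beta$ and a prime $\fP$ with $N\fP>M$. For $v\in W_{\beta,\fP}$, put $w_\fP=\beta_\fP v\in B_\fP$ and take $g_\fP=g_\fP(w_\fP)\in G'(F_\fP)$ from Condition 2. Using finiteness of the class group of $G'$ (or strong approximation for the relevant piece), write the adele $g_\fP\beta_\fP$ (trivial at primes $\ne\fP$, modulo $\beta$) as $\gamma_i\beta' g$, where $\gamma_i\in\prod_{\fQ}G'(\cO_\fQ)$, $\beta'$ is a fixed class representative, and $g\in G'(F)$.

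By Condition 2(a), $gv$ lies in a lattice $V'_{\beta'}$ commensurable with $V(\cO)$, and its image at $\fP$ reduces into $Y_{k}(k(\fP))$ with $k=k(w_\fP)$. By Condition 1, $H(gv)\le (N\fP)^{-\alpha}H(v)$ with $\alpha=\alpha(w_\fP)$. Since $\delta$ is trivial on $G$, $g$ normalizes the $G(F)$-orbit structure, so $G_\beta v\mapsto G_{\beta'}gv$ is well defined. This defines
\[
\pi_{\beta,\fP}\colon W_{\beta,\fP}\longrightarrow \bigsqcup_{\beta',k}\{v'\in V'_{\beta'}\colon v' \bmod \fP\in Y_k,\ H(v')<X(N\fP)^{-\alpha}\}.
\]
By Condition 3 its fibers on orbits have size at most $C_2$.

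\textbf{Counting the images.} For fixed $\fP$ and $k$, I would count the target orbits by AXIOM: Counting combined with Proposition \ref{prop:cong}, applied to the main body and then to the cusp region as in the proof of Theorem \ref{prop:maintransference}. Since $Y_k$ has codimension $k$ in the fibre over $\fP$, $\#Y_k(k(\fP))\ll (N\fP)^{\dim V-k}$. This gives
\[
O\Bigl(\frac{X^{\ord(H)}}{(N\fP)^{\ord(H)\alpha+k}}+\text{(error)}\Bigr).
\]
By Condition 2(c), $\ord(H)\alpha+k\ge 1+\eta$. Summing over $N\fP>M$ via the prime ideal theorem (or its function-field analogue) then gives $O\bigl(X^{\ord(H)}/(M^\eta\log M)\bigr)$.

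\textbf{Large primes.} The main obstacle is the primes with $N\fP$ large relative to $X$, where the term $+1$ in Proposition \ref{prop:cong} dominates. Here I would mimic the proof of Theorem \ref{thm:Th7F}. When $k\ge2$, Lemma \ref{lem:Lem5F} shows that the points lying on $Y_k(F_\infty)$ contribute $o(X^{\ord(H)})$. The remaining points lie in $Y_k \bmod \fP$ but not on $Y_k$, so some defining polynomial of $Y_k$ is a nonzero element of $\fP$; a divisor-bound argument then gives few $\fP$ per point, summable to $o(X^{\ord(H)})$. When $k$ is small, Condition 2(c) forces $\alpha$ to be positive. Then $H(v')<X(N\fP)^{-\alpha}$, so the count becomes empty once $N\fP\gg X^{1/\alpha}$, and the intermediate range is handled by a dyadic decomposition in $N\fP$. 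The cusps also need care, to ensure the error terms of Davenport's lemma in Proposition \ref{prop:Davenport} remain uniform in $\fP$. I would handle this by averaging over $\FF_{\beta'}$ exactly as in the proof of Theorem \ref{prop:maintransference}, so that it remains the main technical point.
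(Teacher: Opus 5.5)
Your overall architecture is the paper's. The paper constructs $\pi_{\beta,\fP}$ exactly as you do: decompose, in $\cl(G')$, the adele that equals $g_\fP(\beta_\fP v)\beta_\fP$ at $\fP$ and $\beta$ elsewhere, and put $v'=h'v$. It then defers the counting to \cite[\S3.4]{Bgeosieve}. Your per-prime main term $X^{\ord(H)}(N\fP)^{-\ord(H)\alpha-k}$ and its summation over $N\fP>M$ are the right ones. Two minor corrections:
\begin{itemize}
\item The height drop is $H(v')\le C_1(N\fP)^{-\alpha}H(v)$, with a constant $C_1$ coming from $\delta$ of the fixed representatives $\gamma$.
\item The triviality of $\delta$ on $G$ is what gives $\prod_{\fP'}|\delta(\beta_{\fP'})|_{\fP'}=1$ in that computation. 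It has nothing to do with $g$ ``normalizing the orbit structure'' ($G$ need not be normal in $G'$). The map is simply defined on orbit representatives, and Condition 3 controls its fibers.
\end{itemize}

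The genuine gap is in the step you yourself flag as the main obstacle, the primes with $N\fP$ large, where your tools are assigned to the wrong cases.

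\textbf{The case $k\ge 2$.} You observe that a target $v'$ not on $Y_k$ has only $O(1)$ large primes $\fP$ with $v'\bmod\fP\in Y_k$. This only converts the sum over pairs $(v',\fP)$ into a count of points. Condition 2(c) need not keep $\alpha$ away from $0$ when $k\ge2$, so there are $\asymp X^{\ord(H)}$ candidate points, and you get $O(X^{\ord(H)})$, not $o(X^{\ord(H)})$. What is missing is the codimension-two content of the geometric sieve: the number of lattice points in $rB$ for which \emph{some} prime $\fP$ with $N\fP>|r|$ has reduction in $Y_k$ is $O(|r|^{n-k+1})$. This is the second term of Theorem \ref{thm:Th7F}. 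It is proved by fibering over $n-1$ coordinates and taking the resultant of two coprime equations vanishing on $Y_k$, which pins down first $\fP$ and then the last coordinate to $O(1)$ choices. Only combined with this does your few-primes-per-point remark give $o(X^{\ord(H)})$.

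\textbf{The case $k=1$.} Your dyadic decomposition fails whenever $\ord(H)\alpha<1$. Once $N\fP$ exceeds the size of the coordinates, Proposition \ref{prop:cong} no longer yields the factor $(N\fP)^{-1}$. With the per-prime bounds available, a block $N\fP\sim P$ then contributes $\asymp(P/\log P)(XP^{-\alpha})^{\ord(H)}$. This increases with $P$ and is roughly $X^{1/\alpha}\gg X^{\ord(H)}$ near $P\approx X^{1/\alpha}$. The correct argument here is the one you placed under $k\ge2$:
\begin{itemize}
\item Since $\alpha\ge\eta/\ord(H)$, for $N\fP>X^{c}$ every target has height $\ll X^{1-c\eta/\ord(H)}$, so there are $O(X^{\ord(H)-c\eta})$ of them.
\item Each target not lying on $Y_1$ arises from $O(1)$ such primes, because a defining polynomial of $Y_1$ takes a nonzero value of norm $X^{O(1)}$ at it.
\item Targets lying on $Y_1$ itself need separate treatment, e.g.\ as non-generic.
\end{itemize}

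\textbf{The case $k=0$.} No large-prime argument is needed at all. The uniform bound $O(Y^{\ord(H)})$ from AXIOM: Counting, together with $\ord(H)\alpha\ge1+\eta$, makes the sum over all $\fP$ converge.
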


We now give the definition for the map $\pi_{\beta,\fP}$. Take any $w_\fP\in W_{\beta,\fP}$. Then, by definition $\beta_\fP v  \in B_\fP$ and $g_\fP(\beta_\fP v)\beta_\fP v\in V(\cO_\fP)$. Consider the following element $g'\in G'(\A_f)$ defined by $g'_\fP=g_\fP(\beta_\fP v) \beta_\fP$ and $g'_{\fP'}=\beta_{\fP'}$ for all $\fP'\neq\fP$. Decompose $G'(\A_f)$ into a finite disjoint union of double cosets: $$G'(\A_f) = \bigcup_{\gamma\in\cl(G')}\left(\prod_{\fP'\notin M_\infty}G'(\cO_{\fP'})\right)\gamma \,G'(F).$$
Fix some representative of each $\gamma\in\cl(G')$ in $G'(\A_f)$. Then there exists $\gamma\in\cl(G')$, $h'\in G'(F)$, and $h_{\fP'}\in G'(\cO_{\fP'})$ for any $\fP'\notin M_\infty$ such that $g'=(h_{\fP'})_{\fP'\notin M_\infty}\gamma h'$ in $G'(\A_f)$. Set $v'\in h'v\in V(F)$. Then $v'$ lies in
 $$V_\gamma := V(F) \cap \gamma^{-1}\left(\prod_{\fP'\notin M_\infty}V(\cO_{\fP'})\right),$$
 with height $$H(v')\leq C_1\frac{H(v)}{(N\fP)^{\alpha(\beta_\fP v)}}$$
 for some constant $C_1=\prod_{\fP'\notin M_\infty} |\delta(\gamma_{\fP'})|_{\fP'}$ depending only on $G$, $G'$ and $F$. Therefore, we set $\pi_{\beta,\fP}(v)=v'$ and have just defined the following map:
\begin{equation}\label{eq:reduction}
 \pi_{\beta,\fP}: G_\beta\backslash W_{\beta,\fP} \rightarrow \bigcup_{\gamma\in \cl(G')}G'_\gamma\backslash V_\gamma.
\end{equation}

See \cite[\S3.4]{Bgeosieve} (particularly \cite[Lemma 3.7]{Bgeosieve}) for the proof of Theorem \ref{thm:redsieve}. We remark that the entirety of AXIOM: Counting at Infinity II is not necessary for Theorems \ref{thm:geosieve} and \ref{thm:redsieve}. All that is needed is Condition 3 of Theorem \ref{prop:maintransference} which implies that the main body of the fundamental domain is compact.

\subsubsection*{The embedding sieve}\label{sec:embed}

In certain situations, the original representation $V$ is not big enough for one to exhibit a larger group $G'$ and to apply the reduction sieve. In this section, we describe an approach where one embeds $V$ into a larger space where the uniformity estimate is already known.

\begin{theorem}\label{thm:embsieve}
 Suppose $G'$ is an affine algebraic group over $\cO$ with a representation $V'$. For any $\gamma\in\cl(G')$, let $V'_\gamma$ and $G'_\gamma$ be defined as in \eqref{eq:V_beta} and \eqref{eq:G_beta}. Let $H'$ be a homogeneous height function on $V'(F_\infty)$ and let $N'$ be a counting function for $V'$. Let $\pi:V\rightarrow V'$ denote an algebraic map over $\Spec\cO$ that descends to a map on orbits (on field-valued points). Then for any $\beta\in\cl(G)$ and any $v\in V_\beta$, there exists some $\gamma\in\cl(G')$ such that $\pi(v)\in V'_\gamma$. Moreover, suppose the following conditions hold:
\begin{enumerate}
 \item The fibers of the maps $\pi_\beta$, for any $\beta\in\cl(G)$,
\begin{equation}\label{eq:pibeta}
 \pi_\beta:G_\beta\backslash V_\beta \rightarrow \bigcup_{\gamma\in\cl(G')}G'_\gamma\backslash V'_\gamma.
\end{equation}
induced by $\pi$ have absolutely bounded sizes.
 \item There exists a constant $\alpha>0$ satisfying $\ord(H')\alpha \leq \ord(H)$ such that for any $\beta\in\cl(G)$, all but finitely many finite prime $\fP$ and any $v\in W_{\beta,\fP}$,
\begin{enumerate}
\item $H'(\pi(v)) \leq C_1 H(v)^\alpha,$ for some absolute constant $C_1$;
\item the orbit of $\pi(v)$ is counted in $N'$ whenever the orbit of $v$ is counted in $N$.
\end{enumerate}
 \item For all but finitely many finite primes $\fP$, there exists a subset $B'_\fP\subset V'(\cO_\fP)$ containing $\pi(B_\fP)$ such that \emph{AXIOM: Uniformity Estimate} is satisfied for $(V',G',\{B'_{\fP}\},N')$.
\end{enumerate}
Then \emph{AXIOM: Uniformity Estimate} is satisfied for $(V,G,\{B_{\fP}\},N)$.
\end{theorem}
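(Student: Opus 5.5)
The plan has two parts. First I will show that $\pi$ lands integral points in some $V'_\gamma$. Then I will pull the uniformity estimate on $V'$ back along $\pi$, using the fiber bound and the height comparison.

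\emph{Existence of $\gamma$.} Fix $\beta$ and $v\in V_\beta$, so that $\beta_\fP v\in V(\cO_\fP)$ for every finite $\fP$. The statement does not say how $\beta$ acts on $V'$, so I take as implicit the compatibility under which $\pi$ descends to orbits: a homomorphism $G\to G'$, say $\rho$, with $\pi(gv)=\rho(g)\pi(v)$. Since $\pi$ is a morphism over $\Spec\cO$, it sends $V(\cO_\fP)$ into $V'(\cO_\fP)$. Hence $\rho(\beta_\fP)\pi(v)\in V'(\cO_\fP)$ for all $\fP$. Write $\rho(\beta)\in G'(\A_f)$ as $(h_\fP)\gamma h'$ with $h_\fP\in G'(\cO_\fP)$, $\gamma$ a fixed representative in $\cl(G')$, and $h'\in G'(F)$, exactly as in the definition of $\pi_{\beta,\fP}$. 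Then $h'\pi(v)\in V'_\gamma$, and it represents the same $G'(F)$-orbit. So, after translating by $h'$ (and I will abuse notation, writing $\pi(v)$ for this translate), $\pi(v)$ lies in $V'_\gamma$. This also makes the induced map $\pi_\beta$ on orbits well defined. Any ambiguity in the choice of $h'$ is absorbed into the fiber-size hypothesis.

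\emph{Transfer of the estimate.} Fix $\beta$ and $M$, and let $v$ range over $G_\beta$-orbits in $\bigcup_{N\fP>M}W_{\beta,\fP}$ with $H(v)<X$ that are counted by $N$.

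1. If $v\in W_{\beta,\fP}$, then $\beta_\fP v\in B_\fP$, so $\pi(\beta_\fP v)\in\pi(B_\fP)\subset B'_\fP$. Using the decomposition above, $\gamma_\fP\pi(v)$ differs from $\rho(\beta_\fP)\pi(v)$ by an element of $G'(\cO_\fP)$. To conclude $\pi(v)\in W'_{\gamma,\fP}$, I need $B'_\fP$ to be $G'(\cO_\fP)$-stable. I will arrange this by replacing $B'_\fP$ with its $G'(\cO_\fP)$-saturation. I expect this replacement not to affect the uniformity estimate, since the counts on $V'$ are counts of $G'_\gamma$-orbits.

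2. By condition 2(a), $H'(\pi(v))\le C_1X^\alpha$. By condition 2(b), the orbit of $\pi(v)$ is counted by $N'$. The finitely many primes excluded in conditions 2 and 3 do not matter, since we may take $M$ larger than all of them.

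3. By condition 1, each $G'_\gamma$-orbit has at most an absolutely bounded number of preimages.

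Putting these together gives
$$\sum_\beta N\Bigl(\bigcup_{N\fP>M}W_{\beta,\fP},G_\beta,X\Bigr)\ll \sum_\gamma N'\Bigl(\bigcup_{N\fP>M}W'_{\gamma,\fP},G'_\gamma,C_1X^\alpha\Bigr).$$
By condition 3, the right-hand side is $O\bigl(X^{\alpha\,\ord(H')}/f'(M)\bigr)+o(X^{\alpha\,\ord(H')})$. Since $\alpha\,\ord(H')\le\ord(H)$, this is $O\bigl(X^{\ord(H)}/f'(M)\bigr)+o(X^{\ord(H)})$, which is \eqref{eq:uniform1}.

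\emph{Main obstacle.} I expect the main difficulty to be the bookkeeping in step 1. The class-group representatives $\gamma$ must be matched so that the local condition at $\fP$ transported by $\beta_\fP$ becomes the local condition at $\fP$ transported by $\gamma_\fP$, up to $G'(\cO_\fP)$. This is the same issue handled for $\pi_{\beta,\fP}$ in the reduction sieve, and I would follow that argument, with $\prod_{\fP}G'(\cO_\fP)$-invariance doing the work. The remaining steps are formal.
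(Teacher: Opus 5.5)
Your argument is the one the paper intends; its entire proof is the sentence ``The proof is immediate.'' The steps match: move $\pi(v)$ into some $V'_\gamma$ using a fixed decomposition of $\rho(\beta)$, send each counted $G_\beta$-orbit in $\bigcup_{N\fP>M}W_{\beta,\fP}$ to a counted $G'_\gamma$-orbit of height $\le C_1X^\alpha$, then use the bounded fibers and $\alpha\,\ord(H')\le\ord(H)$.

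One step in your proposal does not work as justified: replacing $B'_\fP$ by its $G'(\cO_\fP)$-saturation. Hypothesis~3 gives the uniformity estimate only for the given sets $B'_\fP$, not for enlarged ones. The fact that $N'$ counts $G'_\gamma$-orbits does not make the enlargement harmless. The reason is that $\gamma_\fP G'_\gamma\gamma_\fP^{-1}$ need not be dense in $G'(\cO_\fP)$, since there is no strong approximation for non-simply-connected groups such as $\GL_2\times\GL_3$. So saturating can add $G'_\gamma$-orbits that the original estimate does not control.

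The correct repair is to take $G'(\cO_\fP)$-invariance of $B'_\fP$ (like $G(\cO_\fP)$-invariance of $B_\fP$) as a standing assumption of the framework. It holds in the applications, where these sets are cut out by $\fP^2$ dividing a discriminant. With it, your step~1 is immediate: $\gamma_\fP h'\pi(v)=h_\fP^{-1}\pi(\beta_\fP v)\in h_\fP^{-1}B'_\fP=B'_\fP$.
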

The proof is immediate.


\section{The average size of the $n$-Selmer groups of elliptic curves}\label{sec:elliptic}

In this section, we extend the results on the average sizes of the $n$-Selmer groups of elliptic curves, for $n=2,3,4,5$, to global fields. We verify the axioms listed in Theorem \ref{thm:countingmain} and prove Theorem \ref{ecthm}.

Let $F$ be a global field of characteristic not equal to $2$ or $3$. An elliptic curve $E/F$ can be expressed as $$E = E_{A,B}:y^2 = x^3 + Ax + B,$$ where $(A,B)\in F$. Two elliptic curves $E_{A,B}$ and $E_{A',B'}$ are equivalent if and only if there is some $\alpha\in F$ such that $A'=\alpha^4A$, $B'=\alpha^6B$. Hence it is natural to view $(A,B)$ as an element of the weighted projective space $\P(4,6).$ We define the height of $E_{A,B}$, and the height of $(A,B)$, by the usual height on weighted projective space defined in Section \ref{sec:heightweighted}. That is, let $I$ be the ideal
$$I = \{a\in F:a^4A\in \cO,a^6B\in\cO\},$$
then,
\begin{equation}\label{eq:heightweightelliptic}
H(A,B) = (NI)\prod_{\fP\in M_\infty} \max(|A|_\fP^{1/4},|B|_\fP^{1/6}).
\end{equation}

Let $S=\A^2$ be the space of pairs $(A,B)$ with an action of $\bG_m$ given by $\alpha.(A,B)=(\alpha^4A,\alpha^6B)$ for any $\alpha\in\bG_m$. For any positive real number $X$, let $S(F)_X$ denote the set of elements of $S(F)$ of height less than $X$ when $F$ is a number field and equal to $X$ when $F$ is a function field. Let $\Sigma_0$ be the fundamental domain for the action of $\bG_m(F)$ on $S(F)$ constructed in Proposition \ref{prop:Sigma}. Then $\Sigma_0$ is defined by congruence conditions and the intersection $\Sigma_0\cap S(F)_{X}$ is bounded. We shall view an elliptic curve over $F$ as an element of $\Sigma_0$.

A family of elliptic curves over $F$ defined by congruence conditions is a subset $\Sigma_1\subset \Sigma_0$ defined by congruence conditions. The characteristic function $\chi_{\Sigma_1}$ of $\Sigma_1$ factors as $\prod_\fP \chi_{\Sigma_1,\fP}$ and we denote by $\Sigma_{1,\fP}$ the subset of $S(F_\fP)$ whose characteristic function is $\chi_{\Sigma_1,\fP}$ for every $\fP$. 
A family given by $\Sigma_1$ is \emph{large} if for all but finitely many primes $\fP$, every element $(A,B)\in S(\cO_\fP)$ with $\Delta=4A^3-27B^2\notin\fP^2$ is contained in $\Sigma_{1,\fP}$. In this section, we prove the following generalization of Theorem \ref{ecthm}.

\begin{theorem}\label{ecthmlarge}
Let $n\in\{2,3,4,5\}$. Let $F$ be any global field of characteristic not dividing $6$ when $n\in\{2,3,4\}$ and characteristic not dividing $30$ when $n=5$.  Consider the set of all elliptic curves over $F$ in a large family $\Sigma_1$ ordered by height. Then the average size of the $n$-Selmer groups of $E$ is equal to $\sigma(n)$, the sum of divisors of $n$.
\end{theorem}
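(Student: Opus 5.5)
We will deduce Theorem \ref{ecthmlarge} from Theorem \ref{thm:countingmain}, one $n$ at a time, using the classical coregular representations from \cite{BS2,BS3,BS4,BS5}. These are: $\PGL_2$ acting on binary quartic forms ($n=2$), $\PGL_3$ on ternary cubic forms ($n=3$), $\PGL_4$ on pairs of quaternary quadratic forms ($n=4$), and $\PGL_5$ on quintuples of $5\times5$ alternating forms ($n=5$). In each case, after passing to a suitable finite cover or isogenous semisimple form so that the AXIOM: \nicerep{} conditions hold over $\cO$, the invariant ring is freely generated by two invariants of degrees $(4k,6k)$ that are identified with $(A,B)$ after rescaling. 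We also need that the degree sum equals $\dim V$. The arithmetic invariant theory over any field of characteristic not dividing $6$ (resp.\ $30$) gives a bijection between $\Sel_n(E)$ and locally soluble $G(F)$-orbits with invariants $(A,B)$. Under this bijection, the nonidentity elements of order not exactly $n$, and those coming from lower-order Selmer elements, correspond to the non-generic orbits. Accordingly, we set $m_{0,\fP}$ to be the indicator of local solubility at $\fP$ times $\chi_{\Sigma_1,\fP}$, and we let the generic set be the orbits that are not non-generic in the sense of \cite{BS2}, namely those whose Selmer element has order exactly $n$ (more precisely, orbits not distinguished in any relevant sub-Selmer sense). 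The count of the remaining orbits is then done by induction on $n$ through the divisors $d\mid n$, as in \cite{BS4}.

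The first step is to check the axioms. AXIOM: \localCondition{} is checked as follows: local solubility is $G(F_\fP)$-invariant and scale-invariant at infinity, stabilizers are $E[n]$ and hence bounded, and integrality at every $\fP$ comes from the existence of integral representatives of soluble orbits over $\cO_\fP$ for $\fP\nmid 6n$ (resp.\ $30$). The finitely many bad primes are handled by allowing a bounded denominator, which is harmless after enlarging $V_\beta$. AXIOM: Local Spreading follows from the algebraic sections (Kostant/Weierstrass-type) over $\cO[1/N]$ constructed in the $\Q$ papers, which are defined over $\Z[1/N]$ and hence over $\cO[1/N]$. AXIOM: Counting at Infinity I follows from Remark \ref{rem:RI} together with Proposition \ref{prop:Sigma} applied to $\P(4,6)$, and by finiteness of $E[n](F_\fP)$-torsors at archimedean places. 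For AXIOM: Counting at Infinity II we use Theorem \ref{prop:maintransference}. Since $G$ and $V$ are split and base-changed from $\Z$, Conditions 2 and 3 are purely combinatorial statements about torus weights, independent of $F$. They are exactly the cusp-cutting inequalities proved in \cite{BS2,BS3,BS4,BS5} (the $U_i$ being the sets whose vanishing forces reducibility or non-genericity). Condition 1 follows from the remark after the theorem: a positive-density set of degree-one primes has non-generic density bounded away from $1$, which is inherited from the $\Q$ computation of $p$-adic densities.

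Next come the uniformity estimates. AXIOM: Uniformity Estimate for $S$ requires counting $(A,B)$ with $\fP^2\mid\Delta$ for large $\fP$. Since $\{\Delta\equiv0 \bmod \fP^2\}$ is not defined by a codimension-two condition mod $\fP$, we split it into two cases. When $\fP\mid A,B$ we get codimension $2$, handled by the geometric sieve (Theorem \ref{thm:Th7F}). In the multiplicative/mod-$\fP^2$ case we argue as in \cite{BS2}, counting by hand with Proposition \ref{prop:cong} and summing over $\fP$, splitting at $N\fP\approx X^{\epsilon}$. For AXIOM: Uniformity Estimate for $V$ we use the embedding sieve (Theorem \ref{thm:embsieve}) for $n=3,4,5$, reducing to ternary cubics or to the quintuple space, where the reduction sieve (Theorem \ref{thm:redsieve}) applies with $G'$ a $\GL$-type group whose determinant character $\delta$ lowers height. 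The relevant $Y_j$ are the loci of mod-$\fP$ forms with a double root, a singular point, etc., as in \cite{BS3,BS5}. For $n=2$ we prove the estimate by hand, mimicking \cite{BS2}'s argument (the case excluded in \S\ref{sec:uniformity}). I expect this, together with the $S$-uniformity, to be the main obstacle: the $\Q$ proofs use explicit factorization arguments over $\Z$ that must be rewritten using ideals and class group representatives $\beta$.

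Finally, we evaluate \eqref{eq:average}. By Proposition \ref{prop:changeofmeasure} and the local mass computations, $M_\fP(m_0)/M_\fP(\Sigma_1)$ equals the average over $f\in\Sigma_{1,\fP}$ of $\#E_f(F_\fP)/nE_f(F_\fP)$ divided by $\#E_f[n](F_\fP)$. Its product over all places is $\prod_\fP |n|_\fP^{-1}\cdot(\ldots)=1$ by the product formula, since $\#E(F_\fP)/n = |n|_\fP^{-1}\#E[n](F_\fP)$. Together with $|\J|$ cancelling by the product formula, this leaves $\tau_{G,F}$. The Tamagawa number of $\PGL_n$ over any global field is $n$, so generic orbits contribute $n$ on average. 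Adding the non-generic contributions, which by induction over divisors total $\sum_{d\mid n,\,d<n} d$, gives $\sigma(n)$.
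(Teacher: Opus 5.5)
Your overall architecture matches the paper's: Theorem \ref{thm:countingmain} with the Bhargava--Ho parametrization, algebraic sections for Local Spreading, Theorem \ref{prop:maintransference} for the cusp, $c_\infty\prod_\fP c_\fP=1$ leaving $\tau_{G,F}=n$, and then adding the lower-order Selmer elements. However, there is a genuine gap. The step you yourself flag as the main obstacle is left open, and the route you propose for it would fail.

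For $n=2$, the large-prime part of the uniformity estimate for $V$ in \cite{BS2} maps binary quartic forms to pairs of ternary quadratic forms $W$. It then bounds the fibres of the induced map on integral orbits using uniform bounds on the number of solutions of Thue equations. Such uniform bounds are not available over an arbitrary global field, so ``mimicking \cite{BS2}'' breaks exactly there. The difficulty is not rewriting factorization arguments with ideals and class-group representatives $\beta$.

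The paper keeps the structure of \cite{BS2}:
\begin{itemize}
\item the geometric sieve (Theorem \ref{thm:Th7F}) for $B_\fP^{(1)}$;
\item a direct count via Proposition \ref{prop:cong}, giving $\ll X^{10}/N\fP^2+X^8$, for $B_\fP^{(2)}$ with $M\le N\fP\le X^2$.
\end{itemize}
For $N\fP>X^2$, however, it replaces the Thue input. It uses the count of $W(\cO)$-orbits from \cite{BSWfield}. It then bounds each fibre by counting monogenizers $\alpha$ of the cubic resolvent ring $R$ with $\|\alpha\|\ll X^{2/d_F}$, using the successive minima of $R$:
\begin{itemize}
\item if $\Delta_R\gg X^{8+\delta}$, every such monogenizer lies in $\cO^\times\alpha_0+\cO$, leaving $O(1)$ choices;
\item if $\Delta_R\asymp Y\ll X^{8+\delta}$, Proposition \ref{prop:Davenport} gives $\ll X^4/\sqrt{Y}+X^{4-2/d_F}$.
\end{itemize}
Taking $\delta<2/d_F$ yields $\ll X^{12}/N\fP^2$ per prime, hence $\ll X^{10}/\log X$ in total over $N\fP>X^2$. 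Without this, or some other substitute for uniform Thue bounds, you only obtain the upper bound for $n=2$. Consequently you also only get an upper bound for $n=4$, since its count of elements of order dividing $2$ relies on the $n=2$ result.

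There are also errors in the setup.
\begin{itemize}
\item For $n=4$ and $n=5$ the acting groups cannot be $\PGL_4$ and $\PGL_5$. A $15$-dimensional group acting on the $20$-dimensional space of pairs of quaternary quadratic forms has a quotient of dimension at least $5$; for $\SL_4$ it is the space of coefficients of $\det(Ax-By)$. A $24$-dimensional group acting on the $50$-dimensional space $5\otimes\wedge^2(5)$ has a quotient of dimension at least $26$. No finite cover or isogenous form changes these dimensions, so neither can yield two invariants of degrees $(8,12)$ or $(20,30)$. You need the groups $G_4$ and $G_5$ of Table \ref{tablereps}, built from $\GL_2\times\GL_4$ and $\GL_5\times\GL_5$; their Tamagawa numbers are still $4$ and $5$.
\item Your verification of Condition 1 of Theorem \ref{prop:maintransference} goes through the remark on degree-one primes, which is explicitly for number fields. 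Over a function field nothing is inherited from the $p$-adic computations over $\Q$. You need the bound on $\lambda_\fP$ over every residue field $k(\fP)$. The paper does this directly for $n=2$, and via \cite[Lemma 14]{BS3}, \cite[Lemma 16]{BS4} and \cite[Proposition 22]{BS5} for $n=3,4,5$.
\item For $n=3,4,5$ the embedding sieve you invoke is not needed, and you do not exhibit a map with bounded fibres. The uniformity arguments of \cite{BS3,BS4,BS5} transfer directly through Theorems \ref{thm:Th7F} and \ref{thm:redsieve}.
\end{itemize}
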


\subsection{Parametrization of $2$-, $3$-, $4$-, and $5$-Selmer group elements of elliptic curves}

For $n\in\{2,3,4,5\}$, we define the representations $(G_n,V_n)$ as follows:
\begin{table}[hbt]
\centering
\begin{tabular}{|c | c|c|}
\hline
$n$ & $G_n$ & $V_n$ \\
\hline
$2$ & $\PGL_2$ & $\Sym^4(2)$ \\[.05in]
$3$ & $\PGL_3$ & $\Sym^3(3)$  \\[.05in]
$4$ & $\{(g_2,g_4)\in\GL_2\times\GL_4:\det(g_2)\det(g_4)=1\}/\{(\lambda^{-2},\lambda)\}$ & $2\otimes\Sym^2(4)$ 
\\[.05in]
$5$ & $\{(g,g')\in\GL_5\times\GL_5:\det(g)^2\det(g')=1\}/\{(\lambda^{-2},\lambda)\}$ & $5\otimes\wedge^2(5)$ 
\\
\hline
\end{tabular}
\caption{Coregular representations parametrizing elements in $n$-Selmer groups of elliptic curves}\label{tablereps}
\end{table}

\noindent Here, the actions of $G_n$ on $V_n$ are:
\begin{equation*}
\begin{array}{rcl}
g\cdot f(x,y)&=&\displaystyle\frac{1}{\det(g)^2}f((x,y)\cdot g);\\[.15in]
g\cdot f(x,y,z)&=&\displaystyle\frac{1}{\det(g)}f((x,y,z)\cdot g);\\[.15in]
(g_2,g_4)\cdot (A,B)&=&(g_4Ag_4^t,g_4Bg_4^t)g_2^t;\\[.1in]
(g,g')(A,B,C,D,E)&=&(gAg^t, gBg^t, gBg^t, gDg^t, gEg^t)(g')^t.
\end{array}
\end{equation*}
It is well known that the representations $V_n$ of $G_n$ are coregular, with invariants freely generated by two polynomials $I_n$ and $J_n$. We single out another polynomial invariant $\Delta_n\in\Z[V]$ given by $\Delta_n=(4I_n^3-J_n^2)/27$, and call it the {\it discriminant}. Let $k$ be any field with characteristic not $2$ or $3$. Any element $v_n\in V_n(k)$ with invariants $I$ and $J$ and nonzero discriminant defines a smooth genus $1$ curve $C^{(v_m)}$ over $k$, naturally embedded in $\P^{n-1}$ when $n=3,4,5$ and in weighted projective space $\P(1,1,2)$ when $n=2$, whose Jacobian is the elliptic curve $E^{(v_n)}$ given by $y^2=x^3-\frac{I}3x-\frac{J}{27}$. More precisely: when $n=2$, the element $v_2(x,y)$ is a binary quartic form and $C^{(v_2)}$ is the curve cut out by the equation $z^2=v_2(x,y)$; when $n=3$, the element $v_3(x,y,z)$ is a ternary cubic form and $C^{(v_3)}$ is the curve cut out by $v_3(x,y,z)=0$; when $n=4$, the element $v_4=(A,B)$ defines two quadrics in $\P^3$, and $C^{(v_4)}$ is their intersection; when $n=5$, the element $v_5=(A_1,\ldots,A_5)$ is a $5$-tuple of alternating $5\times 5$ matrices, and $C^{(v_5)}$ is the curve cut out by their five Pfaffians. 
We say that $v_n$ is \emph{soluble} over $k$ if $C^{(v_n)}(k)$ is nonempty. For the global field $F$, we say that $v_n\in V_n(F)$ is {\it locally soluble} if it is soluble over $F_\fP$ for every place $\fP$ of $F$.

Then the following result follows from \cite[Theorems 4.1, 4.5, 4.11, 4.14]{BH} and \cite[Remarks 4.3, 4.9, 4.13, 4.15]{BH}:
\begin{theorem}
Assume that $F$ is a global field of characteristic not $2$, $3$ or $5$. Fix $n\in\{2,3,4,5\}$ and an elliptic curve $E=E_{A,B}$ over $F$. Then there exists a bijection between elements in the $n$-Selmer group of $E$ over $F$ and locally soluble $G_n(F)$-orbits on $V_n(F)$ having invariants $I_n(E)$ and $J_n(E)$, where $I_n(E)=\alpha_n A$ and $J_n(E)=\beta_n B$ for some fixed nonzero constants $\alpha_n,\beta_n\in\cO_F$.

Moreover, the stabilizer of any $v_n\in V_n(F)$ with nonzero discriminant is isomorphic to the finite flat group scheme $E^{(v_n)}[n]$.
\end{theorem}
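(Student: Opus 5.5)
The proof will be a reduction to the cited results of Bhargava--Ho \cite{BH}, which are stated over an arbitrary field of characteristic not $2$, $3$ (and not $5$ when $n=5$), combined with the standard Galois-cohomological description of Selmer groups. Fix $n$ and $E=E_{A,B}$. First, over any field $k$ of characteristic prime to $6n$, \cite[Theorems 4.1, 4.5, 4.11, 4.14]{BH} give a bijection between $G_n(k)$-orbits on $V_n(k)$ with invariants $(I_n(E),J_n(E))=(\alpha_nA,\beta_nB)$ and the set of isomorphism classes of pairs $(C,D)$. Here $C$ is a genus one curve (an $E$-torsor) and $D$ is a degree $n$ divisor class, or a degree $2$ map to $\P^1$ when $n=2$. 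The normalizations $\alpha_n,\beta_n$ are read off from the formulas identifying $\mathrm{Jac}(C^{(v_n)})$ with $y^2=x^3-\frac I3x-\frac J{27}$. Equivalently, these orbits correspond to $H^1(k,E[n])$ via the map sending a class to its $n$-diagram. Soluble orbits, those with $C(k)\neq\emptyset$, correspond to the image of the Kummer map $E(k)/nE(k)\hookrightarrow H^1(k,E[n])$. This is exactly \cite[Remarks 4.3, 4.9, 4.13, 4.15]{BH}: once $C$ has a $k$-point, the class lies in the kernel of $H^1(k,E[n])\to H^1(k,E)$.

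Next, apply this with $k=F$ and $k=F_\fP$ for every place $\fP$. The map from $G_n(F)$-orbits to $H^1(F,E[n])$ is compatible with restriction to $F_\fP$. Hence a rational orbit is locally soluble exactly when its class restricts into the local Kummer image at every place, that is, when it lies in $\Sel_n(E)$. Injectivity and surjectivity onto the Selmer group follow from the bijection over $F$ itself. One point needs care. The correspondence with $H^1(k,E[n])$ requires that every class be representable by an element of $V_n(k)$, so that the relevant obstruction in the Brauer group vanishes. Over a global field, this obstruction vanishes for locally soluble classes by the Hasse principle for the Brauer group. In fact only Selmer elements are needed, and for those the local obstruction is zero at every place.

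For the stabilizer statement, compute over $\bar k$. By \cite{BH}, the stabilizer in $G_n$ of $v_n$ with $\Delta\neq0$ is the group of automorphisms of $(C,D)$ induced by translations preserving $D$, which is $\mathrm{Jac}(C)[n]=E^{(v_n)}[n]$. Its finiteness and flatness over $k$ hold because $n$ is invertible in $k$.

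The main obstacle I anticipate is the characteristic hypotheses. \cite{BH} is written for characteristic $0$, or away from small primes. I expect one would check that every step there (the invariant theory of $(G_n,V_n)$, the construction of $C^{(v_n)}$, and the étaleness of $E[n]$) only requires $6n$ to be invertible. That explains the conditions "not $2$, $3$" and "not $5$" in the statement.
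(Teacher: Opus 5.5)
Your proposal is correct and follows the same route as the paper, which obtains the statement directly from \cite[Theorems 4.1, 4.5, 4.11, 4.14]{BH} and \cite[Remarks 4.3, 4.9, 4.13, 4.15]{BH}; your descent step makes explicit what the paper leaves to those citations: orbits with fixed invariants $\leftrightarrow$ classes in $H^1(k,E[n])$ with trivial Brauer obstruction, soluble orbits $\leftrightarrow$ the Kummer image, and vanishing of the global obstruction for Selmer classes by the Hasse principle for $\mathrm{Br}(F)$, which also holds for function fields. The only thing to tidy is your initial ``equivalently'', which identifies the orbits with all of $H^1(k,E[n])$; over a general field only the kernel of the obstruction map is reached, as you yourself note a few lines later.
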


Fix $n\in\{2,3,4,5\}$.
We impose the following condition of irreducibility on $V_n(F)$: an element $v_n\in V_n(F)$ is {\it generic} if it has nonzero discriminant and has order exactly $n$ in $\Sel_n(E^{(v_n)})$.
Let $\Sigma_1$ be a large family of elliptic curves. Then an application of Theorem \ref{thm:countingmain}, with appropriately defined weight functions (to ensure that the invariants of $v_n$ being counted align with $\Sigma_1$, and that local-solubility is enforced) will yield the average value of $|\Sel_n(E)|-1$ when $n\in\{2,3,5\}$ and $|\Sel_4(E)|-|\Sel_2(E)|$ when $n=4$. Therefore, to prove Theorem \ref{ecthmlarge}, it only remains to verify the various axioms upon which Theorem \ref{thm:countingmain} relies. 

AXIOM: \nicerep~is clear: all the groups in question are semisimple; the representations are all coregular; the sum of the degrees of the invariants equals $\dim V_n$ as $2+3=5$, $4+6=10$, $8+12=20$, and $20+30=50$ for $n=2$, $3$, $4$, and $5$, respectively; the generic stabilizers are finite. Conditions 1 and 2 for AXIOM: \localCondition~are immediately verified. Condition 3 is what is known as ``minimisation'', and are due to Cremona--Fisher--Stoll \cite{CFS} in the cases $n=2$, $3$, and $4$, and to Fisher \cite{Fisher} in the case $n=5$. See also work of Laga \cite{Laga}, giving a uniform proof for  this type of ``minimization'' result in many cases. Condition 4 follows from the same argument as in the proof of \cite[Lemma 30]{BS5}. 

AXIOM: Local Spreading follows from the existence of algebraic sections $\iota_n:S\to V_n$ defined over $\cO[1/6]$. When $n = 2$, we have $$\iota_2(I,J) = x^3y - \frac{I}{3}xy^3 - \frac{J}{27}y^4.$$ When $n = 3$, we have $$\iota_3(I,J) = x^3 - \frac{I}{3}xz^2 - \frac{J}{27}z^3 - y^2z.$$ When $n = 4$, we have
$$\iota_4(I,J) = 
 \left(
\left[ \begin{array}{cccc} 0 & 0 & 0 & 1\\ 0 & 0 & 0 & 0\\ 0 & 0 & 1 & 0\\ 1 & 0 & 0 & 0\end{array} \right],
\left[ \begin{array}{cccc} 0 & 0 & -1 & 0\\ 0 & -1 & 0 & 0\\ -1 & 0 & 0 & -I/6\\ 0 & 0 & -I/6 & -J/27\end{array} \right]
\right).
$$
When $n = 5$, the formula is written down in \cite[(4)]{BS5}. 
Composing with the quotient map $V(F_\infty)\to S(F_\infty)$ gives a homogeneous height function $H$ on $V(F_\infty)$. Since the number of $G(F_\infty)$-orbits on $V(F_\infty)$ having fixed degenerate invariants is absolutely bounded, AXIOM: Counting at Infinity I follows from AXIOM: Local Spreading as in Remark \ref{rem:RI}. 

AXIOM: Uniformity Estimate for $S$ is shown to be satisfied in the following result:
\begin{proposition}
Let $F$ be a global field of characteristic not $2$ or $3$, and denote $\Sigma_0\cap S(F)_X$ $($defined at the beginning of \S4$)$ by $\Sigma_{0,X}$. Then we have
\begin{equation*}
\sum_{N(\mathfrak{p})>M}\#\bigl\{(A,B)\in \Sigma_{0,X}:\mathfrak{p}^2\mid\Delta(A,B)\bigr\}\ll \frac{X^{10}}{M}+\frac{X^{10}}{\log X}.
\end{equation*}
\end{proposition}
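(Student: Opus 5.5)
Following the $F=\Q$ argument of \cite{BS2}, I would split the primes with $N\fP>M$ according to whether $\fP^2\mid\Delta$ holds \emph{mod $\fP$ for a codimension-two reason} (type I) or only mod $\fP^2$ (type II). Since $\Delta=4A^3-27B^2$ and the characteristic is not $2$ or $3$, $\Delta$ defines a cuspidal curve. If $\fP^2\mid\Delta$, then either $\fP\mid A$ and $\fP\mid B$, or $\fP\nmid AB$ and the reduction of $(A,B)$ lies on the smooth locus of $\Delta=0$. Elements of $\Sigma_0$ satisfy the local condition $\Sigma_\fP\subset\gamma_\fP^{-1}S'(\cO_\fP)$, which rules out $\fP^4\mid A,\fP^6\mid B$ at unramified places. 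The type I pairs have reduction in the codimension-two subscheme $Y=\{A=B=0\}$. By Theorem~\ref{thm:Th7F}, applied with $n=2$, $k=2$, to the homogeneously expanding region $\{(A,B)\in\Sigma_\infty:H\le X\}$ (compact by Proposition~\ref{prop:Sigma}), their total count is $O(X^{10}/(M\log M)+X^{10\cdot 1/2})$, using the weighting $A\sim X^4,B\sim X^6$ in each coordinate, so that the lattice count is $X^{10}$. Strictly speaking the weights are unequal, so I would rerun the proof of Theorem~\ref{thm:Th7F} with a box of sides $X^4\times X^6$ (per archimedean place, normalized as in Remark~\ref{rem:S1}). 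Proposition~\ref{prop:cong} then gives $O(X^{10}/N\fP^2+X^6)$ pairs per prime for $N\fP\ll X^4$; summing over $N\fP>M$ gives $X^{10}/M$. The large primes are handled as in \cite{Bgeosieve} via Lemma~\ref{lem:Lem5F}.

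For type II, with $\fP\nmid A$ and $\fP^2\mid\Delta$, I would count by fixing $A$ and solving for $B$. For fixed $A$ with $\fP\nmid A$, the congruence $27B^2\equiv 4A^3\pmod{\fP^2}$ has at most two solutions mod $\fP^2$ (Hensel, $p\ne2,3$). So for $N\fP^2\le X^6$, Proposition~\ref{prop:cong} in one variable gives $O(X^6/N\fP^2+1)$ values of $B$, and summing over $A$ and over $M<N\fP\le X^3$ yields $O(X^{10}/M+X^4\cdot X^3/\log X)$, which is acceptable. The remaining range $N\fP>X^3$ is the real obstacle, as in \cite{BS2}. Here I would instead view $\fP^2\mid\Delta$ as forcing $\Delta=\fP^2 m$ with $|N(m)|$ small, and count pairs $(A,B)$ with $\Delta(A,B)$ divisible by the square of a large prime. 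Over $\Q$ this uses the factorization $27B^2-4A^3$ and a count of lattice points on the curves $27B^2-4A^3=\Delta$, via $\Delta\equiv 0\bmod q^2$ and a divisor bound. The approach I would try first is: write $D=\Delta/\fP^2$ (an integral ideal of norm $\ll X^{12}/N\fP^2<X^6$), and for each pair $(A,D')$, where $D'$ is a generator up to units (finitely many classes, handled with the class group and a fundamental domain for $\cO^\times$ as in Proposition~\ref{prop:Sigma}), count the $B$ with $27B^2\equiv 4A^3$ modulo $D'$. This gives $O(X^{4}\cdot X^{12}/N\fP^2\cdot N\fP^{\epsilon})$ summed over $\fP$, i.e.\ $o(X^{10})$ once $N\fP>X^{3+\delta}$. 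The window $X^3<N\fP\le X^{3+\delta}$ is bounded by the previous paragraph's estimate with a $\log$ loss, giving the $X^{10}/\log X$ term. The function field case runs identically, with the degree in place of $\log$ norm.

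The main obstacle is this large-prime range. If the divisor-count approach fails to produce a power or log saving uniformly over number fields, I would instead import the Ekedahl/Poonen-style argument of \cite{Bgeosieve}, \S3 for the hypersurface $\Delta=0$. That argument parametrizes points near the smooth part of the cusp and bounds pairs with $\fP^2\mid\Delta$ for $N\fP$ large via the $\fP$-adic density $O(1/N\fP^2)$ together with Lemma~\ref{lem:Lem5F} on the fibres $A$ fixed. This yields exactly the stated $X^{10}/\log X$ error.
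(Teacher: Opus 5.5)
You have the right per-prime estimates, and they match the paper's. For $\fP\nmid 6$, fixing $A$ with $\fP\nmid A$ leaves at most two classes of $B$ modulo $\fP^2$, so Proposition~\ref{prop:cong} gives $O(X^6/N\fP^2+1)$ choices of $B$. If $\fP\mid A$, then $\fP\mid B$ as well, giving $O\bigl((X^4/N\fP+1)(X^6/N\fP+1)\bigr)$ pairs. Altogether this is $\ll X^{10}/N\fP^2+X^6/N\fP+X^4$ per prime, and it holds for every $\fP$. The ``$+1$'' in Proposition~\ref{prop:cong} already covers a modulus larger than the box, so your restriction to $N\fP^2\le X^6$ is unnecessary.

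The gap is in your treatment of $N\fP>X^3$, which you call the real obstacle. The divisor-count step does not work as written. Counting $B$ for each pair $(A,D')$ gives a bound of size $X^{16}N\fP^{\epsilon-2}$ per prime. Since $N\fP\ll X^6$, this is at least $X^4$ throughout the relevant range, so it is worse than the bound you already have. Summed over $N\fP>X^{3+\delta}$ it gives about $X^{13-\delta}$, not $o(X^{10})$. The fallback to \cite{Bgeosieve} is not an argument either: the type II condition is codimension one modulo $\fP$ (smooth points of $\Delta=0$), so Theorem~\ref{thm:Th7F} does not apply.

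The missing observation is that there is no large-prime problem. On $\Sigma_{0,X}$ you have $|A|_v^{1/\deg v}\ll X^{4/d_F}$ and $|B|_v^{1/\deg v}\ll X^{6/d_F}$, so $N(\Delta)\ll X^{12}$. Hence $\fP^2\mid\Delta\neq0$ forces $N\fP\ll X^6$, and there are only $\ll X^6/\log X$ such primes. Summing your per-prime bound over $M<N\fP\ll X^6$ gives $X^{10}/M+X^6\log\log X+X^4\cdot X^6/\log X$, which is the claimed $X^{10}/M+X^{10}/\log X$. The $X^{10}/\log X$ term in the statement is exactly the cost of the trivial $O(1)$ choices of $B$ per $A$ per prime, summed over all admissible primes. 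This is the paper's entire proof, so the geometric sieve for type I and the large-prime machinery for type II can both be dropped.
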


\begin{proof}
    Suppose $\fP\nmid 6$. Let $W_\fP = \{(A,B)\in\Sigma_{0,X}\colon \fP^2\mid\Delta(A,B)\}$. Fix one of the possible $O(X^{4})$ possible choices for $A$ occurring as some $(A,B)\in\Sigma_{0,X}$ with $\fP\nmid A$. If $(A,B)\in W_\fP$ , then the reduction of $B$ modulo $\mathfrak{p}^2$ has at most an absolutely bounded number of choices. Hence the number of possible choices for $B$, once $A$ is fixed, is at most $O(X^{6}/N(\mathfrak{p}^2)+1)$ by Proposition \ref{prop:cong}. On the other hand, if $\fP\mid A$, then $(A,B)\in W_\fP$ would imply $\fP\mid B$. Hence the total number of $(A,B)\in W_\fP$ is $$\ll X^{4}\Big(\frac{X^{6}}{N\fP^2}+1\Big) + \Big(\frac{X^{4}}{N\fP}+1\Big)\Big(\frac{X^{6}}{N\fP}+1\Big) \ll \frac{X^{10}}{N\fP^2}+\frac{X^{6}}{N\fP}+X^{4}$$ Finally, note that every $(A,B)\in \Sigma_{0,X}$ satisfies $|\Delta(A,B)|\ll X^{12}$. Hence, if $\mathfrak{p}^2\mid\Delta(A,B)$, then $N(\mathfrak{p})\ll X^{6}$ and so there are at most $X^{6}/\log X$ possibilities for $\mathfrak{p}$. Summing over these prime ideals, we immediately obtain the required result.
\end{proof}

\begin{remark}
    Our height function defined in \eqref{eq:heightweightelliptic} is the $12$-th power of the height used in \cite{BS2}, which is why $\ord(H) = 10$ instead of $5/6$.
\end{remark}

Therefore, it only remains to check AXIOM: Counting at Infinity II, and AXIOM: Uniformity Estimate for $V$, which we do case by case in the next four subsections.

\subsection{$2$-Selmer: $\PGL_2$ acting on $\Sym^42$}

\subsubsection*{Verification of AXIOM: Counting at Infinity II:}

In this case, we have $G = \PGL_2$ and $V = \Sym^42$.
We verify AXIOM: Counting at Infinity II by checking the three conditions in Theorem \ref{prop:maintransference}. For Condition 1, note that an integral binary quartic form $f\in V(F)$ with nonzero discriminant corresponds to the trivial element in a $2$-Selmer group if it has a rational linear factor. Fix a finite prime $\fP$ and note that if a binary quartic form in $V(\cO)$ is non-generic, then its reduction modulo $\fP$ is non-generic in $V(k(\fP))$. The $\fP$-adic density of generic binary quadric forms in $V(k(\fP))$ is $1/4-\epsilon$ (the density of generic monic degree $4$ polynomials). Hence the $\fP$-adic density of non-generic binary quadric forms in $V(\cO)$ is at most $1-1/4+\epsilon$, thereby yielding Condition 1. 

Let $T$ denote the split torus consisting of diagonal elements $\diag(t^{-1},t).$ 
The representation $V$ decomposes as $\chi_{x^4}\oplus\chi_{x^3y}\oplus\chi_{x^2y^2}\oplus\chi_{xy^3}\oplus\chi_{y^4}$ when restricted to $T$, where
$$\chi_{x^4}(t) = t^{-4},\chi_{x^3y}(t)=t^{-2},\chi_{x^2y^2}(t)=1,\chi_{xy^3}(t)=t^2,\chi_{y^4}(t)=t^4.$$
Let $U_0$ denote the set consisting of these characters. A binary quartic form $f(x,y)$ is non-generic if its $x^4$ coefficient is $0$. We set $U_1=\{\chi_{x^4}\}$; then $f$ is generic only if it is $U_1$-generic. A basis $\Delta$ of the positive roots is given by the singleton $\{\alpha\}$ where $\alpha(t) = t^2$. The Haar measure character $\delta$ is given by $\delta(t) = t^{-2}$. Since $\chi_{x^4}$ is the minimal weight, we only need to check Condition 2 for $U=\emptyset$ which follows since $\delta=-\alpha\in\Q^{<0}[\Delta]$. Condition 3 is also immediate as $\alpha = -\chi_{x^3y}\in\Q^{\leq0}[U_0].$ Therefore, AXIOM: Counting at Infinity II is satisfied.

\subsubsection*{Verification of AXIOM: Uniformity Estimate for $V$:}

The verification of AXIOM: Uniformity Estimate for $V$ parallels the proof for $F=\Q$ in \cite[Theorem 2.13]{BS2}. The weight function $m_\fP$ on $V(F_\fP)$ for any $\fP\notin M_\infty$ was defined in \eqref{eq:defnofmp}. There are four possible reasons for $m_\fP(f)\neq 1$ for some $f\in V(\cO_\fP)$: the invariant $(I(f),J(f))$ of $f$ does not lie in $\Sigma_\fP$; $f$ is not $F_\fP$-soluble; the $F_\fP$-orbit $G(F_\fP)f$ breaks up into more than one $\cO_\fP$-orbit; the stabilizer of $f$ has more points over $F_\fP$ than over $\cO_\fP$. The proof of \cite[Proposition 3.18]{BS2} generalizes to arbitrary non-archimedean local fields implying that for all but finitely many $\fP$, we have $m_\fP(f)\neq 1$ only when the discriminant $\Delta(f)\in\fP^2$. We let $B_\fP$ be the subset of $V(\cO_\fP)$ consisting of binary quartic forms $f(x,y)$ such that $\Delta(f)\in\fP^2.$ It remains to verify AXIOM: Uniformity Estimate for the quadruple $(V, G,\{B_\fP\}, N).$ 

Let $B_\fP^{(1)}$ denote the set of $f\in V(\cO_\fP)$ such that $\Delta(f + \omega_\fP g)\in\fP^2$ for all $g\in V(\cO_\fP)$ where $\omega_\fP$ denotes a uniformizer of $\fP$. Write $B_\fP^{(2)}$ for the complement $B_{\fP}\backslash B_{\fP}^{(1)}$. AXIOM: Uniformity Estimate for the quadruple $(V, G,\{B_\fP^{(1)}\}, N)$ is proved via the geometric sieve in \cite[Theorem 2.18]{BS2}, and Theorem \ref{thm:Th7F} verifies it for general global fields. AXIOM: Uniformity Estimate for the quadruple $(V, G,\{B_\fP^{(2)}\}, N)$ is proved in \cite[Theorem 2.20]{BS2} by combining two different estimates, and we will verify it for arbitrary number fields $F$ in the same way. First, as there, we directly bound the $N(B_\fP^{(2)}, G, X)$ by fibering over the first four coefficients of an integral binary quartic form, and noting that the fifth coefficient is determined modulo $\fP^2$ up to an absolutely bounded number of choices. By Proposition \ref{prop:cong}, we have the bound
\begin{equation*}
N(B_\fP^{(2)}, G, X)\ll \frac{X^{10}}{N\fP^2}+X^{8}.
\end{equation*}
Summing the right hand side of the above estimate over primes $\fP$ with $M\leq N\fP\leq X^2$ gives an error
\begin{equation}\label{eq:unif_2Sel1}
\sum_{M\leq N\fP\leq X^2}N(B_\fP^{(2)}, G, X)\ll \frac{X^{10}}{M\log M}+\frac{X^{10}}{\log X}
\end{equation}
by the prime number theorem.

For our second step, we follow the strategy in \cite{BS2}, and embed $V$ in the larger space of pairs of ternary quadratic forms $W$, which admits an action of $\GL_2\times\GL_3$. In \cite{BS2}, we proceeded by using uniformity estimates on orbits in $W(\Z)$ from \cite{dodqf}, and bounding the fibers of the map from orbits in $V(\Z)$ to orbits in $W(\Z)$ by known uniform bounds on the number of solutions to the Thue equation. In our setting, we will use results from \cite{BSWfield} which prove uniformity estimates on orbits in $W(\cO)$. However, uniform bounds on the number of solutions to the Thue equation over arbitrary global fields do not seem to exist in the literature (at least to the authors' knowledge). Instead, we use an approach from \cite{SSW} of keeping track of the height of $f$ in terms of data from the $\GL_2(\cO)\times\SL_3(\cO)$-orbit. We use this extra information to bound the sizes of the fibers.

The embedding $V(\cO)\to W(\cO)$ is discriminant preserving, so the image of $B_\fP^{(2)}\cap V(\cO)$ lies inside the set of elements in $W(\cO)$ with norm discriminants $\ll X^{12}$ and divisible by~$\fP^2$. From \cite[(29)]{BSWfield}, the number of these orbits with nonzero discriminant is $\ll X^{12}/N\fP^2$. Fix one of these orbits $O$. Then this orbit corresponds to a quartic ring extension $Q$ of $\cO$ along with a cubic resolvent ring $R$ of $Q$ where $R$ is monogeneic over $\cO$. 
We know that the set of $\GL_2(\cO)$-orbits in $V(\cO)$ mapping to a fixed orbit in $W(\cO)$ is in bijection with the set of $\cO$-orbit of monogenizers $\alpha\in R$ (i.e. $R = \cO[\alpha]$), where $s\in\cO$ acts via $\alpha\mapsto\alpha+s$.  If $f$ is a $\GL_2(\cO)$-orbit in $V(\cO)$ corresponding to $\alpha$ such that the minimal polynomial of $\alpha$ over $\cO$ is $x^3 + ux^2 + Ix + J$ with $u$ belonging to a fixed set of representatives for $\cO/3\cO$, then we have
\begin{equation*}
I(f)\asymp I,\quad J(f)\asymp J.
\end{equation*}
For a place $v\in M_\infty$, recall that $\deg v$ denotes 
$[F_v:\R]$ when $F$ is a number field and $[F_v:\F_p(t)]$ when $F$ is a function field. We defined $d_F$ in \S\ref{sec:not} to be
\begin{equation*}
d_F=\sum_{v\in M_\infty} \deg v.
\end{equation*}

By our construction of $\Sigma_0$ in Proposition \ref{prop:Sigma}, we know that 
$$|I|_v^{1/\deg v} \ll X^{4/d_F}\qquad\mbox{and}\qquad |J|_v^{1/\deg v} \ll X^{6/d_F} \qquad\mbox{for all }v\in M_\infty.$$
Let $N_\infty$ denote the set of places of $\mathrm{Frac}(R)$ above $M_\infty$ and we write $\|\beta\| = \max_{\fQ\in N_\infty}|\beta|_\fQ^{1/\deg\fQ}$ for any $\beta\in R$. Then we have $\|\alpha\| \ll X^{2/d_F}$ for any $\alpha$ in $R$ with the above minimal polynomial. Let $\{1,\beta_1,\beta_2\}$ be a Minkowski basis for $R$ over $\cO$. Then viewing $R$ as a lattice $\L_R$ inside $\mathrm{Frac}(R)\otimes F_\infty$, we see that it has $d_F$ successive minima of size $\asymp 1$, $d_F$ successive minima of size $\asymp \|\beta_1\|$, and $d_F$ successive minima of size $\asymp \|\beta_2\|$. The discriminant of the lattice $\L_R$ equals the absolute discriminant $\Delta_R$ of $R$ up to a multiplicative constant so $$\|\beta_1\|\cdot\|\beta_2\| \asymp \Delta_R^{1/(2d_F)}.$$

Now suppose that the absolute discriminant of $R$ is $\gg X^{8+\delta}$ for some $\delta>0$ and suppose $R$ has a monogenizer $\alpha_0\notin\cO$ with $\|\alpha_0\| \ll X^{2/d_F}$. Then we have
$$\|\beta_1\| \ll X^{2/d_F}\qquad\mbox{and}\qquad \|\beta_2\| \gg X^{(2+\delta/2)/d_F}.$$
Then any $\alpha\in R$ with $\|\alpha\|\ll X^{2/d_F}$ is of the form $c_1\alpha_0 + c_2$ with $c_1,c_2\in\cO$. Such an $\alpha$ can be a monogenizer of $R$ only when $c_1\in\cO^\times$. We see that there can only be $O(1)$  possible such $\alpha$, since $|\alpha|_v^{1/\deg v}$ for $v\in M_\infty$ are bounded in terms of each other. So the contribution to $N(B_\fP^{(2)}, G, X)$ from elements $f$ with $|\Delta_2(f)|\gg X^{8+\delta}$ is
\begin{equation*}
\ll \frac{X^{12}}{N\fP^2}.
\end{equation*}
We next handle the contribution to $N(B_\fP^{(2)}, G, X)$ from elements $f$ with $|\Delta_2(f)|\asymp Y$, for $Y\ll X^{8+\delta}$. The number of orbits in $W(\cO)$ with this discriminant bound is now $\ll Y/N\fP^2$. Fix any one of these orbits (corresponding to $(Q,R)$). We count $\alpha\in R/\cO$ inside $(\mathrm{Frac}(R)\otimes F_\infty)/F_\infty$. Davenport's lemma (Proposition \ref{prop:Davenport}) gives the bound $X^4/\sqrt{Y} + X^{4 - 2/d_F}$ of these. Adding up over dyadic ranges of $Y$ yields the bound
\begin{equation*}
\frac{X^{8+\delta/2}}{N\fP^2}+\frac{X^{12+\delta-2/d_F}}{N\fP^2}.
\end{equation*}
Therefore, by taking $\delta<2/d_F$, we obtain
\begin{equation*}
N(B_\fP^{(2)}, G, X)\ll 
\frac{X^{12}}{N\fP^2}.
\end{equation*}

Summing this bounds over primes $\fP$ with $N\fP>X^2$, we obtain
\begin{equation}\label{eq:unif_2sel2t}
\sum_{N\fP> X^2}N(B_\fP^{(2)}, G, X)\ll 
\frac{X^{10}}{\log X}.
\end{equation}
Combining the two bounds \eqref{eq:unif_2Sel1} and \eqref{eq:unif_2sel2t} verifies the axiom.

\subsection{$3$-Selmer: $\PGL_3$ acting on $\Sym^33$}\label{sec:Sel3E}

\subsubsection*{Verification of AXIOM: Counting at Infinity II:}

In this case, we have $G=\PGL_3$ and $V=\Sym^3(3)$. A ternary cubic form in $V(F)$ with nonzero discriminant corresponds to the trivial element in a $3$-Selmer group if it contains a rational flex. If this happens, we say the form is \emph{non-generic}.
We verify AXIOM: Counting at Infinity II by checking the three conditions in Theorem \ref{prop:maintransference}. For Condition 1, \cite[Proof of Lemma 14]{BS3} gives $1-\lambda_\fP \gg (N\fP)^{-1}$, so the product of $\lambda_\fP$ diverges to $0$.

Let $T$ denote the split torus consisting of diagonal elements $t=\diag(s_1^{-2}s_2^{-1},s_1s_2^{-1},s_1s_2^2).$
A basis $\Delta$ of the positive roots is given by $\{\alpha_1,\alpha_2\}$ with $\alpha_1(t)=s_1^3,\alpha_2(t)=s_2^3.$ The representation $V$ decomposes as $\chi_{x^3}\oplus\chi_{x^2y}\oplus\chi_{x^2z}\oplus\cdots\oplus\chi_{z^3}$ when restricted to $T$, where
$$\chi_{x^3}=-2\alpha_1-\alpha_2,\,\chi_{x^2y}=-\alpha_1-\alpha_2,\,\chi_{x^2z}=-\alpha_1,\,\chi_{xy^2}=-\alpha_2,\,\chi_{xyz}=1,$$
$$\chi_{xz^2}=\alpha_2,\,\chi_{y^3}=\alpha_1-\alpha_2,\,\chi_{y^2z}=\alpha_1,\,\chi_{yz^2}=\alpha_1+\alpha_2,\,\chi_{z^3}=\alpha_1+2\alpha_2.$$
Let $U_0$ denote the set consisting of these characters. We set $U_1=\{\chi_{x^3},\chi_{x^2y},\chi_{x^2z}\}$ and $U_2=\{\chi_{x^3},\chi_{x^2y},\chi_{xy^2}\}$; then a ternary cubic form $f$ is non-generic if it is $U_1$-non-generic or $U_2$-non-generic. The Haar measure character $\delta$ is given by $\delta=-2\alpha_1-2\alpha_2$. Since $\chi_{x^2z}$ and $\chi_{xy^2}$ form a complete set of minimal characters in $U_0-\{\chi_{x^3},\chi_{x^2y}\}$, we only need to check Condition 2 for $U=\emptyset,$ for $U=\{\chi_{x^3}\}$ and for $U=\{\chi_{x^3},\chi_{x^2y}\}$. For $U=\emptyset$, we have $$\delta + \sum_{\chi\in U_0-U}\chi = -2\alpha_1-2\alpha_2\in\Q^{<0}[\Delta].$$ For $U=\{\chi_{x^3}\}$, we have $$\delta + \sum_{\chi\in U_0-U}\chi = -\alpha_2$$ and we can take $\pi_U = \epsilon\chi_{x^2y}$ for any real number $\epsilon$ satisfying $0<\epsilon<1$. For $U=\{\chi_{x^3},\chi_{x^2y}\}$, we have $$\delta + \sum_{\chi\in U_0-U}\chi = \alpha_1$$ and we can take $\pi_U = \epsilon_1\chi_{x^2z}+\epsilon_2\chi_{xy^2}$ for any real numbers $\epsilon_1,\epsilon_2$ satisfying $\epsilon_1>1,\epsilon_2>0,\epsilon_1+\epsilon_2 < 2$. Condition 3 is immediate as $\alpha_1 = -\chi_{x^2z}$ and $\alpha_2=-\chi_{xy^2}$ are in $\Q^{\leq0}[U_0].$ Therefore, AXIOM: Counting at Infinity II is satisfied.

\subsubsection*{Verification of AXIOM: Uniformity Estimate for $V$:}

The proof of AXIOM: Uniformity Estimate for $V$ for the case $F=\Q$ in \cite[Proposition 25]{BS3} generalizes to arbitrary global fields via the geometric sieve and the reduction sieve.

\subsection{$4$-Selmer: $G_4$ acting on $2\otimes\Sym^24$}\label{sec:Sel4E}

\subsubsection*{Verification of AXIOM: Counting at Infinity II:}

A pair $(A,B)$ of quaternary quadratic forms corresponds to a $2$-torsion element of a $4$-Selmer group exactly when its quartic resolvent form $\det(Ax+By)$ is non-generic. When this happens, we say that $(A,B)$ is {\it non-generic}. AXIOM: Counting at Infinity II is satisfied because Theorem \ref{prop:maintransference} is already proved in \cite{BS4}: Condition 1 is \cite[Lemma 16]{BS4}; Condition 2 is \cite[Lemma 15]{BS4}; Condition 3 is \cite[Lemma 14]{BS4}.

\subsubsection*{Verification of AXIOM: Uniformity Estimate for $V$:}
The verification of AXIOM: Uniformity Estimate for $V$ in \cite[Theorem 23]{BS4} generalizes to arbitrary global fields via the geometric sieve and the reduction sieve.

\subsection{$5$-Selmer: $G_5$ acting on $5\otimes\wedge^25$}\label{sec:Sel5E}

\subsubsection*{Verification of AXIOM: Counting at Infinity II:}
AXIOM: Counting at Infinity II is satisfied because Theorem \ref{prop:maintransference} is already proved in \cite{BS5}: Condition 1 is \cite[Proposition 22]{BS5}; Condition 2 is \cite[Proposition 18]{BS5}; Condition 3 follows as the the character associated to the $(1,2)$-coordinate of $A$ has the form $\sum_{\alpha\in\Delta} n_\alpha[\alpha]$ with all $n_\alpha<0$.

\subsubsection*{Verification of AXIOM: Uniformity Estimate for $V$:}

The verification of AXIOM: Uniformity Estimate for $V$ in \cite[Theorem 27]{BS5} generalizes to arbitrary global fields via the geometric sieve and the reduction sieve.

\subsection{Putting it all together}

With all necessary axioms verified for the representations $(G_n,V_n)$, it only remains to compute the local masses. We can do this in one stroke for each $n$, since we have
\begin{equation}\label{eq:mass}
c_\infty = \frac{\#E_{A,B}(F_\infty)/nE_{A,B}(F_\infty)}{\#E_{A,B}[n](F_\infty)},\quad c_\fP  = \frac{\#E_{A,B}(F_\fP)/nE_{A,B}(F_\fP)}{\#E_{A,B}[n](F_\fP)}.
\end{equation}
In each case, we have $c_\infty\prod c_\fP=1$. Therefore, applying Theorem \ref{thm:countingmain}, we see that for $n\in\{2,3,4,5\}$, the average size of the number of order $n$ elements in the $n$-Selmer groups of elliptic curves in $\Sigma_1$ is bounded by $\tau(G_n)=n$. Adding $1$ for the identity element for $n\in\{2,3,5\}$ and adding $1+2$ for elements of order dividing $2$ for $n=2$ yields Theorem \ref{ecthmlarge}, and hence also Theorem \ref{ecthm}.

Theorem \ref{thm:ellrank} now follows from Theorem \ref{ecthm} by noting that if the rank of an elliptic curve $E$ is $r$, then we have $|\Sel_5(E)|\geq 5^r\geq 20r-15$. Therefore the average rank of elliptic curves over $F$ (or indeed, in any large family of elliptic curves over $F$), ordered by height, is bounded by $21/20$.


\section{Rational points and ranks of Jacobians of hyperelliptic curves}\label{sec:hyper}
The goal of this section is to generalize the results in \cite{BG1}, \cite{SW}, and \cite{BGW} regarding the average sizes of the 2-Selmer groups of Jacobians of hyperelliptic curves.

\subsection{Monic hyperelliptic curves}
Fix a positive integer $m\geq1$ and let $F$ be a global field of characteristic not $2$. We consider monic degree $m$ hyperelliptic curves over $F$ given by an affine equation of the form
$$C_{c_1,\ldots,c_m}=C_f: y^2 = f(x)=x^{m} + c_1x^{m-1} + \cdots c_{m},$$
where $c_1,\ldots,c_{m}\in F$. When $m=2n+1$ is odd, then these are \emph{odd hyperelliptic curves} of genus $n$. When $m=2n+2$ is even, these are \emph{monic even hyperelliptic curves} of genus $n$.

Two curves $C_{c_1,\ldots,c_m}$ and $C_{c'_1,\ldots,c'_m}$ are equivalent if and only if there is some constant $\alpha\in F$ such that $c_i=\alpha^{2i}c'_i$ for all $i=1,\ldots,m$. Hence it is natural to view $(c_1,\ldots,c_m)$ as an element of the weighted projective space $\P(2,4,\ldots,2m).$ We define the height of $C_f$, and the height of $(c_1,\ldots,c_m)$, by the usual height on weighted projective space defined in Section \ref{sec:heightweighted}. That is, let $I$ be the ideal
$$I = \{a\in F:a^{2i}c_i\in \cO,\forall i=1,\ldots,m\},$$
then,
\begin{equation}\label{eq:heightweightell_hy}
H(c_1,\ldots,c_m) = (NI)\prod_{\fP\in M_\infty} \max(|c_1|_\fP^{1/2},\ldots,|c_m|_\fP^{1/(2m)}).
\end{equation}

Let $S=\A^{m}$ be the space of $n$-tuples $(c_1,\ldots,c_m)$ with an action of $\bG_m$ given by $\alpha.(c_1,\ldots,c_m)=(\alpha^2c_1,\ldots,\alpha^{2m}c_m)$ for any $\alpha\in\bG_m$. For any positive real number $X$, let $S(F)_X$ denote the set of elements of $S(F)$ of height less than $X$ when $F$ is a number field, and equal to $X$ when $F$ is a function field. Let $\Sigma_0$ be the fundamental domain for the action of $\bG_m(F)$ on $S(F)$ constructed in Proposition \ref{prop:Sigma}. Then $\Sigma_0$ is defined by congruence conditions and the intersection $\Sigma_0\cap S(F)_{X}$ is bounded. We shall view a monic degree $m$ hyperelliptic curve over $F$ as an element of $\Sigma_0$.

A family of monic degree $m$ hyperelliptic curves over $F$ defined by congruence conditions is a subset $\Sigma_1\subset \Sigma_0$ defined by congruence conditions. We say a family given by $\Sigma_1$ is \emph{large} if for all but finitely many primes $\fP$, every element $(c_1,\ldots,c_m)\in S(\cO_\fP)$ with $\Delta(x^m + c_1x^{m-1} + \cdots + c_m)\notin\fP^2$ is contained in $\Sigma_{1,\fP}$. We prove the following generalization of Theorem \ref{hyperecthm}.

\begin{theorem}\label{thm:oddhyper}
Fix a positive integer $m$ and a global field $F$ of characteristic not $2$. When all monic degree $m$ hyperelliptic curves over $F$ in a large family $\Sigma_1$ are ordered by height, the average size of the $2$-Selmer groups of their Jacobians is bounded above by~$3$ if $m$ is odd, and $6$ if $m$ is even.
\end{theorem}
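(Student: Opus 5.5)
The plan is to apply Theorem \ref{thm:countingmain} in its upper-bound form, which needs only AXIOM: Counting, Local Weights, Local Spreading and Uniformity Estimate for $S$. We do not need the uniformity estimate for $V$. The representations are those of \cite{BG1} and \cite{SW}:
\begin{itemize}
\item for $m=2n+1$, $G=\PSO_{2n+1}$ (the orthogonal group of a fixed split form $A_0$) acting on $V=\Sym^2$, i.e.\ self-adjoint operators $T$ with respect to $A_0$, with invariants the coefficients of $(-1)^n\det(A_0x-T)$;
\item for $m=2n+2$, $G=\SL_{2n+2}/\mu_2$ acting on pairs $(A_0,B)$ with $A_0$ fixed, equivalently $\SO_{2n+2}$-type actions as in \cite{SW}.
\end{itemize}
By the arithmetic invariant theory in those papers, which is purely algebraic and valid over any field of characteristic not $2$, elements of $\Sel_2(J_f)$ inject into locally soluble $G(F)$-orbits with invariants $f$. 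The nontrivial elements correspond to \emph{generic} orbits, meaning those not distinguished (for $m$ odd). In the even case, the non-generic orbits also account for the extra classes coming from the two points at infinity.

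We define $m_0=\prod_\fP m_{0,\fP}$ as follows: it is the indicator of local solubility at each $\fP$, times the indicator that $\inv(v)\in\Sigma_{1,\fP}$. AXIOM: Local Weights condition 3 (integral representatives) follows from the existence of integral orbits over $\cO_\fP$ for integral $f$, proved in \cite{BG1,SW}; those proofs are local, so they hold over any non-archimedean field of residue characteristic not $2$. For the finitely many dyadic primes in the number field case, we instead bound by a lattice $\fP^{-k}V(\cO_\fP)$, which only affects constants in the upper bound. AXIOM: Representation is classical. AXIOM: Local Spreading uses the Kostant-type algebraic section $S\to V$ over $\cO[1/2]$ (the companion matrix construction). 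AXIOM: Counting at Infinity I then follows from Remark \ref{rem:RI} together with Proposition \ref{prop:Sigma}. AXIOM: Uniformity Estimate for $S$ for monic polynomials is the result of Oller \cite{Oller} cited in \S\ref{sec:uniformity}.

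The main work is AXIOM: Counting at Infinity II, and we will verify it through Theorem \ref{prop:maintransference}. Here $G$ is split over $\Z$ and $V$ is base-changed from $\Z$, so Conditions 2 and 3 are statements about weights of the split torus and hence independent of $F$. We will take the weight computations from \cite[\S10]{BG1} and \cite{SW}. In these, a generic element has the top-left antidiagonal block entries $b_{ij}$, $i+j\le 2n+1$, not all zero; the $U_i$ are the sets of weights of such blocks, which force an isotropic flag and hence a rational or distinguished structure. There we extract, for each saturated $U$, the $\pi_U$ used in the cusp-cutting integrals. Condition 1 is handled as in the elliptic case: reduction mod $\fP$ shows that the density of non-generic elements is at most the density of reducible-like configurations over $k(\fP)$, which is bounded by $1-c$ uniformly in $\fP$. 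This is the step I expect to be most delicate. Over $\Q$, \cite{BG1} proves genericity is negligible via a positive-density argument, e.g.\ $f$ irreducible mod $\fP$ forces a generic orbit. I would redo this by noting that a positive proportion $\gg 1/m$ of monic $f$ mod $\fP$ are irreducible, and that such $f$ admit no non-generic lift.

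Finally, we compute the masses using Proposition \ref{prop:changeofmeasure} and Lemma \ref{lem:volumecomputation}. At each place, the number of soluble orbits weighted by inverse stabilizer is $\#J(F_\fP)/2J(F_\fP)\,/\,\#J[2](F_\fP)$, and the product formula gives $c_\infty\prod_\fP c_\fP=1$ exactly as in \eqref{eq:mass}. Theorem \ref{thm:countingmain} then bounds the average number of generic elements by $\tau_{G,F}$. This Tamagawa number equals $2$ for $\PSO_{2n+1}$, and $4$ for the even case (where the class group and $\pi_1$ of order $4$ contribute). Adding the non-generic contributions gives the result: the identity ($1$) in the odd case, and the $2$ classes in the even case, counted on average as in \cite{SW}. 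This yields $3$ and $6$.
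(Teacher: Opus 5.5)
Your architecture matches the paper's: the upper-bound half of Theorem \ref{thm:countingmain}, integrality, Local Spreading and Counting at Infinity I from \cite{BG1,SW}, Oller for uniformity over $S$, $c_\infty\prod_\fP c_\fP=1$, $\tau_{G,F}=2$ resp.\ $4$, and adding $1$ resp.\ $2$ distinguished classes. However, two steps fail as you propose them.

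\textbf{Condition 1.} Your route to Condition 1 of Theorem \ref{prop:maintransference} is wrong, because non-genericity is a property of the orbit, not of the invariant. For every $f$, the Kostant section element $s(f)$ you use for Local Spreading is integral at $\fP\nmid 2$ and distinguished. So no condition on $f \bmod \fP$ alone can exclude non-generic lifts. Irreducibility actually points the wrong way. If $\bar f$ is irreducible of odd degree, then $J_f[2](k(\fP))=0$, hence $H^1(k(\fP),J_f[2])=0$. Then every $T\in V(\cO_\fP)$ with invariant $f$ is $G(\cO_\fP)$-conjugate to $s(f)$, i.e.\ \emph{all} lifts are non-generic.

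The paper instead cites the local density computations \cite[Proposition 31]{BG1} and \cite[Proposition 23]{SW}. A correct mod-$\fP$ argument must work with $\bar T$ rather than $\bar f$. For a positive proportion of squarefree $\bar f$, the group $H^1(k(\fP),J_f[2])$ has more classes than the distinguished ones. The $G(k(\fP))$-orbits in $V_{\bar f}(k(\fP))$ all have the same size. Finally, a non-distinguished $\bar T$ has only non-distinguished lifts, since a common isotropic subspace (or flag) over $F_\fP$ saturates over $\cO_\fP$ and reduces mod $\fP$.

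\textbf{Conditions 2 and 3 for $m$ even.} You propose to extract the $\pi_U$ from \cite{SW}. But \cite{SW} did not prove Counting at Infinity II via the conditions of Theorem \ref{prop:maintransference}, so this data is not there. Producing it is the main new content of the paper's proof, and it needs three ingredients.
\begin{enumerate}
\item[(i)] A sufficient list of non-generic vanishing patterns, which is Proposition \ref{prop:reduciblecondition}. This includes the $I\times J$ patterns with $|I|+|J|\ge 2n+2$, which force zero discriminant. It also includes \emph{two} distinguished patterns $U(n)_0,U(n)_1$, coming from a flag $W\subset W'$ with $\dim W'=n+1$ \cite[Proposition 2.31]{W2}. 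Your block $i+j\le 2n+1$ is only $U(n)_0$. Without $U(n)_1$, Condition 2 already fails for $n=1$ at the saturated set $U=\{\chi_{b_{11}},\chi_{b_{13}}\}$.
\item[(ii)] Condition 3, which follows from $\alpha_i\le-\chi_{b_{11}}$.
\item[(iii)] For each saturated $U$ avoiding these patterns, the choice $\pi_U=\sum_k(\epsilon+\max(0,e_k(U)+\delta(n)_k))\pi_k$, with $\pi_k\le-\alpha_k$ in $\Q[U(n)-U]$ (Table \ref{table1} for $k=n,n+1$). One must then prove $\sum_k\max(0,e_k(U)+\delta(n)_k)\deg\pi_k<\#U$. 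The paper does this by induction on $n$, stripping the first and last rows and columns and splitting by the shape of $U^1$.
\end{enumerate}

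\textbf{Two smaller points.} The monic even representation is $\PSO_{2n+2}$ on self-adjoint operators, not $\SL_{2n+2}/\mu_2$ on pairs; the latter is the general-hyperelliptic representation, with $\tau=2$. Also, enlarging the lattice at dyadic primes changes no constant. This is because $m_\fP$ is defined relative to the lattice, and Lemma \ref{lem:volumecomputation} makes the local mass independent of it. If constants were affected, you would not get exactly $3$ and $6$. The paper instead rescales to $\Sigma=\kappa^2\kappa_1.\Sigma_1$ and uses Proposition \ref{prop:oddhypersol}.
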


\subsubsection{Odd hyperelliptic curves}\label{sec:hyperelliptic}

We now specialize to the case where $m=2n+1$ is odd. Let $G=\SO_{2n+1}$ be the special orthogonal group of the split quadratic form $Q_0$ on a $2n+1$ dimensional vector space acting on the space $V$ of self-adjoint operators by conjugation. The ring of polynomial invariants is freely generated by the coefficients $c_1,\ldots,c_{2n+1}$ on the characteristic polynomial of $T$ defined by
$$f(x)=\det(xI - T) = x^{2n+1} + c_1x^{2n} + \cdots + c_{2n+1}.$$
The degree of $c_i$ is $i$ for all $i=1,\ldots,2n+1$. This verifies AXIOM: \nicerep. Fix any family $\Sigma_1$ of odd hyperelliptic curves over $F$ defined by finitely many congruence conditions. Let $\kappa_1$ be a nonzero element of $\cO$ such that $\kappa_1.\Sigma_1\subset S(\cO)$. 

Let $k$ be any field of characteristic not $2$. We say a self-adjoint operator $T\in V(k)$ is \emph{stable} if it is regular semisimple, or equivalently if its characteristic polynomial has no repeated factors. Let $T\in V(k)$ be stable with characteristic polynomial $f(x)$. Then its stabilizer scheme is isomorphic to $J_f[2]$ (\cite[Proposition 11]{BG1}). Moreover, there is an associated torsor $F_T[2]$ of $J_T[2]$ which allows one to embed the set of $G(k)$-orbit in $V_f(k)$ into $H^1(k, J_f[2])$. The \emph{distinguished} orbit maps to the identity element and the \emph{soluble} orbits map to the Kummer embedding of $J_f(k)/2J_f(k)$ in $H^1(k,J_f[2])$ (\cite[Proposition 12]{BG1}). We say a stable element $T\in V(k)$ is \emph{generic} if its orbit is not distinguished. For a global field $F$, the \emph{locally soluble} orbits, consisting of stable $T\in V(F)$ that is $F_\fP$-soluble for every prime $\fP$ of $F$, are in bijection with the $2$-Selmer group $\Sel_2(J_f/F).$

We have the following generalization of \cite[Proposition 19]{BG1} with the same proof.

\begin{proposition}\label{prop:oddhypersol}
Let $\kappa\in\cO$ be a fixed nonzero element so that every element in $1+\kappa^4\cO_\fP$ is a square for any a non-archimedean place $\fP$ of $F$. Fix any non-archimedean place $\fP$ of $F$. Suppose $T\in V(F_\fP)$ is $F_\fP$-soluble and its invariant lies in $\kappa.S(\cO_\fP)$. Then $T$ is $G(F_\fP)$-conjugate to an element in $V(\cO_\fP)$.
\end{proposition}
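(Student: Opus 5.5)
The plan is to follow the proof of \cite[Proposition 19]{BG1} verbatim, replacing $\Z_p$ by $\cO_\fP$ and $p$ by a uniformizer $\omega_\fP$. First I will dispose of the linear coefficient. Write the invariant of $T$ as $f(x)=\det(xI-T)$. Since the $G(F_\fP)$-action commutes with $T\mapsto \lambda T$ and with $T\mapsto T+c I$ for $c\in F_\fP$, solubility and integrability are unaffected by these changes, up to the corresponding changes of $f$. If the residue characteristic divides $2n+1$, the shift must be handled with care. In that case I will keep $c_1$ and work with monic $f$ whose coefficients are divisible by suitable powers of $\kappa$. The factor $\kappa$ in the hypothesis is exactly what gives room for this.

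The core step uses the description of soluble orbits. By \cite[Proposition 12]{BG1}, which holds over any field of characteristic not $2$, the soluble orbits with invariant $f$ correspond to the image of $J_f(F_\fP)/2J_f(F_\fP)$ in $H^1(F_\fP,J_f[2])$. Concretely, they correspond to classes in $(L^\times/L^{\times2})_{N\equiv1}$, where $L=F_\fP[x]/f(x)$, that come from divisors $\sum (P_i)-d\cdot\infty$ with $P_i\in C_f(\bar F_\fP)$ forming a Galois-stable set. I will take the generators of $J_f(F_\fP)/2$ of the form $[D-d\infty]$, with $D$ cut out by $y^2=f(x)$, $P(x)=0$, and $y=R(x)$. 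For each such class, \cite{BG1} writes down an explicit self-adjoint operator with respect to a split form, built from the pair $(P,R)$ via the Mumford representation and an explicit lattice (an $\cO_\fP$-submodule of $L$ with a suitable split pairing). This operator has characteristic polynomial $f$ and lies in the given orbit.

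The main obstacle is integrality. I need to choose the Mumford pair $(P,R)$ so that $P,R$ are integral, $P$ is monic, and $P\mid f-R^2$ with integral quotient. Then the associated lattice is an integral self-dual (split) lattice preserved by multiplication by $x$, which makes the constructed $T$ lie in $V(\cO_\fP)$. The plan here is to use the hypothesis that $f$ has coefficients in $\kappa\cO_\fP$, or more precisely after scaling by $\kappa^2$, together with the fact that $1+\kappa^4\cO_\fP\subset F_\fP^{\times2}$. This lets me perturb a point $P_i$ with $\fP$-adically large coordinates, or a small $y$-coordinate, by a $\kappa$-adically small amount without changing its class mod $2J_f(F_\fP)$, since such perturbations are squares. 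It also lets me rescale via $x\mapsto \kappa^2x$, $y\mapsto\kappa^{2n+1}y$, so that the points become integral. Following \cite{BG1}, I will reduce to the two cases $x(P)$ integral and $x(P)$ non-integral, handling the latter via the point at infinity. In each case I will exhibit an integral Mumford pair.

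Finally, given integral $(P,R)$, I will check that the resulting lattice carries a unimodular split form over $\cO_\fP$. Unimodular split quadratic spaces over $\cO_\fP$ of the same dimension are isometric when the residue characteristic is odd, which holds here because the characteristic of $F$ is not $2$, or after enlarging $\kappa$ at the finitely many dyadic places. This isometry gives a change of basis in $G(F_\fP)$ that carries $T$ into $V(\cO_\fP)$, and that is the required conjugate.
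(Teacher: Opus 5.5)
Citing \cite[Proposition 19]{BG1} is all the paper does, so at that level your plan agrees with it. The argument you actually outline, however, does not prove the statement as posed.

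\textbf{Dyadic places.} The concrete failure is at the places $\fP\mid 2$ of a number field. Your last step uses that unimodular lattices over $\cO_\fP$ of equal rank and discriminant are isometric. You justify odd residue characteristic by $\mathrm{char}\,F\neq 2$, but for a number field the primes $\fP\mid 2$ have residue characteristic $2$, and the proposition covers every non-archimedean $\fP$. (For function fields of characteristic $\neq 2$ this step is fine.) Over a dyadic ring the classification is false. Take $\Z_2[\sqrt2]$ and $u=1+\sqrt2$. The lattice $\langle 1,u,-u\rangle$ is unimodular, spans the split ternary space, and has discriminant $-1$ up to unit squares. Yet its norm group (the additive group generated by the values $B(v,v)$) contains $u-1=\sqrt2$. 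The norm group of $(\Z_2[\sqrt2]^3,Q_0)$ is $\Z_2+2\Z_2[\sqrt2]$, which does not contain $\sqrt2$. So a $\theta$-stable unimodular lattice need not give an element of $V(\cO_\fP)$. Your fallback of ``enlarging $\kappa$'' is not available either. $\kappa$ is given, and replacing it by a multiple shrinks $\kappa.S(\cO_\fP)$, so it proves a weaker statement. The hypothesis $1+\kappa^4\cO_\fP\subset F_\fP^{\times 2}$, together with $c_i\in\kappa^{2i}\cO_\fP$, is the input that has to carry these places, for instance through the extra divisibility obtained by writing $T=\kappa^2T'$. Your sketch never shows how it enters.

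\textbf{Integral Mumford pairs.} Independently of the residue characteristic, the step that carries the weight is only promised: that every class of $J_f(F_\fP)/2J_f(F_\fP)$ is represented by a divisor with an integral Mumford pair.
\begin{itemize}
\item It must be done for every class, not for a set of generators. Nothing in your sketch makes integral representability compatible with the group law.
\item The dichotomy ``$x(P)$ integral or not'' controls $P(x)=\prod(x-x(P_i))$ but not $R$. The coefficients of $R$ are divided differences of the $y(P_i)$. They acquire denominators when two points of $D$ have $\fP$-adically close $x$-coordinates but distant $y$-coordinates. When these two points are Galois conjugate, applying the hyperelliptic involution to their orbit only changes $R$ to $-R$, so the denominator remains.
\item If $D$ contains a Weierstrass point, then $P(\theta)$ and $R(\theta)$ both vanish in the corresponding factor of $L$. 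So $(P(\theta),R(\theta))$ is not a lattice of full rank.
\end{itemize}
Showing that each coset of $2J_f(F_\fP)$ contains a divisor avoiding all of this is the substance of the proof, and your sketch leaves it open.

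\textbf{Minor point.} $T\mapsto\lambda T$ replaces $C_f$ by its quadratic twist by $\lambda$. It therefore preserves solubility only for $\lambda\in F_\fP^{\times 2}$.
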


Set $\Sigma=\kappa^2\kappa_1.\Sigma_1$. Let $V_{\Sigma}(F)$ be the set of locally soluble $T\in V(F)$ with invariants in $\Sigma$ and for any prime $\fP$, let $V_{\Sigma,\fP}(F)$ be the set of soluble stable  $T\in V(F_\fP)$ with invariants in $\Sigma_{\fP}$. Let $m_\Sigma$ be the characteristic function of $V_{\Sigma}(F)$ and for any prime $\fP$, let $m_{\Sigma,\fP}$ be the characteristic function of $V_{\Sigma,\fP}(F)$. Then $m_\Sigma=\prod_\fP m_{\Sigma,\fP}$ is a local product and each $m_{\Sigma,\fP}$ is $G(F_\fP)$-invariant. Then the weight function $m_\Sigma$ satisfies AXIOM: \localCondition: Condition 3 is Proposition \ref{prop:oddhypersol}; Condition 4 is in the proof of \cite[Proposition 32]{BG1}.

The quotient map $V\rightarrow S$ admits and algebraic section $S\rightarrow V$ defined over $\cO[1/2]$ (\cite[\S4.1]{BG1}. This verifies AXIOM: Local Spreading.

When restricted to $\Sigma$, the height function defined in \eqref{eq:heightweightell_hy} is given by
$$H(c_1,\ldots,c_{2n+1}) = (N(\kappa^2\kappa_1))^{-1}\prod_{\fP\in M_\infty} \max(|c_1|_\fP^{1/2},\ldots,|c_{2n+1}|_\fP^{1/(4g+2)}).$$
We extend this naturally to $S(F_\infty)$. Composing with the quotient map $V(F_\infty)\rightarrow S(F_\infty)$ gives a height function $H$ on $V(F_\infty)$ homogeneous of degree $1/2.$ Since the number of $F_\infty$-orbits is absolutely bounded, Remark \ref{rem:RI} gives the pre-compact fundamental domains $R_\lambda$. Hence AXIOM: Counting at Infinity I is satisfied.

AXIOM: Counting at Infinity II is satisfied because Theorem \ref{prop:maintransference} is already proved in \cite{BG1}: Condition 1 is \cite[Proposition 31]{BG1}; Condition 2 is \cite[Proposition 29]{BG1}; Condition 3 is shown in the proof of \cite[Proposition 29]{BG1}. AXIOM: Uniformity Estimate for $S$ is proved in \cite[Theorem 1.2]{Oller}.

The various masses are exactly like \eqref{eq:mass}, where
$$c_\infty = \frac{\#J_f(F_\infty)/2J_f(F_\infty)}{\#J_f[2](F_\infty)},\quad c_\fP  = \frac{\#J_f(F_\fP)/2J_f(F_\fP)}{\#J_f[2](F_\fP)},$$
do not depend on $f\in \Sigma$. The product formula for abelian varieties gives $c_\infty\prod_{\fP\notin M_\infty}c_\fP  = 1.$ Therefore, by Theorem \ref{thm:countingmain}, the average number of generic locally soluble orbits with invariant in $\Sigma$ is bounded above by $\tau_{G,F}=2.$

\subsubsection{Monic even hyperelliptic curves}\label{sec:monicevenhyper}

Suppose now $m=2n+2$ is even. There are now two points $\infty,\infty'$ at infinity. AXIOMS: \nicerep, \localCondition, Counting at Infinity I, Local Spreading and Uniformity Estimate for $S$ are exactly like the case of $n$ odd in the previous section using the results of \cite{SW}. Let $k$ be any field of characteristic not $2$. The distinguished orbits now correspond to the identity element and the image of $(\infty)'-(\infty)$ in $H^1(k,J_f[2])$. For a global field $F$, they are distinct 100\% of the time, by \cite[Proposition 30]{SW}, when ordered by $H(f)$. The analogous Proposition \ref{prop:oddhypersol} follows from \cite[Proposition 12]{SW}. 

AXIOM: Counting at Infinity II is checked in \cite{SW} over $\Q$ but unlike \cite{BG1}, it was not proved by checking the three conditions of Theorem \ref{prop:maintransference}. Condition 1 is \cite[Proposition 23]{SW}. We now check Conditions 2 and 3.

Let $G=\PSO_{2n+2}$ be the special orthogonal group of the split quadratic form $Q_0$ on $\Z^{2n+2}$ modulo the center $\mu_2$. It acts on the space $V$ of self-adjoint operators by conjugation. We fix a basis $\{e_1,\ldots,e_{2n+2}\}$ so that the Gram matrix of $Q_0$ is given by the matrix $A$ with $1$'s on the anti-diagonal and $0$'s elsewhere. We identify the space of self-adjoint operators $T$ with the space of symmetric matrices, which we also denote by $V$, under the map $T\mapsto AT$. The action of $g\in G$ on some $B\in V$ is given by $g.B = gBg^t.$ We use $b_{ij}$ to denote the $(i,j)$-entry of some element $B$ of $V$. Let $T$ be the split torus of $G$ consisting of diagonal elements $t=\diag(t_1^{-1},\ldots,t_{n+1}^{-1},t_{n+1},\ldots,t_1)$. A basis $\Delta$ of positive roots is given by the set $\{\alpha_1,\ldots,\alpha_{n+1}\}$, where
\begin{eqnarray*}
    \alpha_i(t)&=&t_i/t_{i+1},\mbox{ for }i=1,\ldots,n,\\
    \alpha_{n+1}(t)&=&t_nt_{n+1}.
\end{eqnarray*}
Denote by $\chi_{b_{ij}}$ the character associated to the $b_{ij}$-coordinate. For example, we have
\begin{eqnarray}
 \label{eq:Cond3prop}\chi_{b_{11}} &=& -2\alpha_1 - \cdots - 2\alpha_{n-1} - \alpha_n - \alpha_{n+1},\\
 \nonumber \chi_{b_{i\,2n+2-i}} &=& -\alpha_i,\quad i=1,\ldots,n,\\
 \nonumber \chi_{b_{n+1\,n+1}} &=& \alpha_n - \alpha_{n+1}.
\end{eqnarray}
Let $U_0=U(n)=\{\chi_{b_{ij}}\}_{i\leq j}$ denote the set of these characters. Note \eqref{eq:Cond3prop} implies that $\alpha_i\leq -\chi_{b_{11}}$ for all $i=1,\ldots,n+1$. This verifies Condition 3 of Theorem \ref{prop:maintransference}. The Haar measure  $\delta=\delta(n)$ is given by (\cite[(20)]{SW}):
\begin{equation}\label{eq:delta}
 \delta(n) = \sum_{j=1}^{n-1} j(j-2n-1)\alpha_j - \frac{n(n+1)}{2}(\alpha_n + \alpha_{n+1}).
\end{equation}
The following proposition gives a criterion for an element $B\in V(F)$ to be non-generic.

\begin{proposition}\label{prop:reduciblecondition}
 Suppose $B\in V(F)$ such that either
\begin{enumerate}
 \item $b_{ij}=0$ for all pairs $i\in I$, $j\in J$ where $I,J$ are two subsets of $\{1,\ldots,2n+2\}$ satisfying $|I|+|J|\geq2n+2$, or
 \item $b_{ij}=0$ for all pairs $i,j$ satisfying $i<n$ and $i+j<2n+2$, and $b_{n, n} = b_{n,n+1} = 0$, or
 \item $b_{ij}=0$ for all pairs $i,j$ satisfying $i<n$ and $i+j<2n+2$, and $b_{n, n} = b_{n,n+2} = 0$.
\end{enumerate}
Then $B$ is non-generic.
\end{proposition}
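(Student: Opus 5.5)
The plan is to go back to the moduli interpretation of \cite{SW}. A symmetric matrix $B\in V(F)$ corresponds to the pencil of quadratic forms spanned by $Q_0$ (Gram matrix $A$) and $B$ on $F^{2n+2}$. Its invariant polynomial is $f(x)=\pm\det(xA-B)$ up to normalisation. By \cite{SW}, a stable $B$ is \emph{non-generic}, i.e.\ its orbit is one of the two distinguished orbits (the identity and the class of $(\infty')-(\infty)$), exactly when the pencil has an $F$-rational common isotropic subspace of the appropriate type. Concretely this means a rational $n$-dimensional subspace isotropic for both forms that extends inside one of the two rulings of $Q_0$. A non-stable $B$, with $\Delta(B)=0$, is non-generic by definition. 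So the proof reduces to exhibiting either $\Delta(B)=0$ or such a rational isotropic configuration, with everything built from the standard basis $e_1,\ldots,e_{2n+2}$.

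For condition 1, put $W_I=\Span\{e_i:i\in I\}$ and $W_J=\Span\{e_j:j\in J\}$. The hypothesis says $B(W_I,W_J)=0$, so $B$ maps $W_J$ into the annihilator of $W_I$, which has dimension $2n+2-|I|\le |J|$.
\begin{itemize}
\item If $|I|+|J|>2n+2$, then $B$ has nontrivial kernel on $W_J$. Hence $\det B=0$, so $f(0)=0$ and $f$ has a repeated root; more precisely, the same block argument for $xA-B$ forces a common factor and so $\Delta=0$. This case is non-stable.
\item If $|I|+|J|=2n+2$, I will apply a Weyl group element of $G$, i.e.\ a signed permutation preserving $A$, to move $I$ and $J$ to a normal form. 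In the normal form the zero block of $B$, together with the anti-diagonal structure of $A$, produces a rational subspace isotropic for both forms: for instance $W_I\cap W_J$ enlarged by pairing indices with $i+j\neq 2n+3$. From that subspace I will read off either degeneracy or distinguishedness.
\end{itemize}
I expect the equality case to be the main obstacle, because the combinatorics of where $I$ and $J$ sit relative to the anti-diagonal has to be organised carefully. I would first try to reduce, by the Weyl action, to $I=\{1,\ldots,k\}$ and $J=\{1,\ldots,2n+2-k\}$, and then induct on $k$.

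For conditions 2 and 3, take $X=\Span\{e_1,\ldots,e_n\}$.
\begin{itemize}
\item $X$ is isotropic for $Q_0$, since $i+j\le 2n<2n+3$ for $i,j\le n$.
\item $X$ is isotropic for $B$: the hypothesis gives $b_{ij}=0$ for $i<n$ and $i+j<2n+2$, which covers every pair in $X\times X$ except $(n,n)$, and $b_{nn}=0$ is assumed.
\item The hypotheses also give $b_{i,n+1}=0$ for $i\le n$ in condition 2, and $b_{i,n+2}=0$ for $i\le n$ in condition 3. So $B(X,e_{n+1})=0$, respectively $B(X,e_{n+2})=0$.
\end{itemize}
The two maximal isotropic subspaces of $Q_0$ containing $X$ are $X\oplus\langle e_{n+1}\rangle$ and $X\oplus\langle e_{n+2}\rangle$, and they lie in the two different rulings. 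The extra vanishing therefore pins down, rationally, a point of the Fano-type variety of the pencil lying in one specified ruling. Via \cite[\S 2--4]{SW}, conditions 2 and 3 should then correspond to the two distinguished classes, namely the identity and the image of $(\infty')-(\infty)$; which ruling goes with which class is still to be checked. In either case $B$ is non-generic.

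It remains to confirm that the ruling assignment matches the normalisation of the distinguished orbits in \cite{SW}, which is a direct comparison with the section used there.
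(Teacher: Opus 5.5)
Your treatment of conditions 2 and 3 is the paper's argument. $W=\Span\{e_1,\ldots,e_n\}$ is isotropic for $A$ and $B$, and $W'=W\oplus\langle e_{n+1}\rangle$ (resp.\ $W\oplus\langle e_{n+2}\rangle$) is $A$-isotropic with $B(W,W')=0$. By \cite[Proposition 2.31]{W2}, a stable $B$ is then distinguished. Since both distinguished classes are non-generic, the ruling bookkeeping you defer is unnecessary. Your opening paraphrase of the criterion should include the condition $B(W,W')=0$; ``extending inside a ruling'' is automatic for any $n$-dimensional $A$-isotropic $W$.

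The gap is condition 1, which the paper settles by citing the discriminant lemma \cite[Lemma 27]{BG1}. Your two steps there both fail:
\begin{itemize}
\item For $|I|+|J|>2n+2$: $\det B=0$ only gives $f(0)=0$, not a repeated root. The ``same block argument for $xA-B$'' is also unavailable. Put $\sigma(j)=2n+3-j$; then $|I|+|\sigma(J)|>2n+2$, so $I\times J$ contains an antidiagonal position $(i,\sigma(i))$, where $xA-B$ has entry $x$.
\item For $|I|+|J|=2n+2$: the Weyl group acts on indices by permutations commuting with $\sigma$, so it preserves $|I\cap\sigma(J)|$. It therefore cannot carry a general pair to initial segments, where this number is $0$. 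Enlarging $W_I\cap W_J$, as you suggest, produces neither a repeated factor nor a distinguished configuration.
\end{itemize}

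This cannot be repaired for arbitrary $I,J$, because condition 1 read that way is false. Take $n=1$, $F=\Q$, $I=\{1\}$, $J=\{2,3,4\}$ and $B=\diag(1,1,1,2)$. The hypothesis holds, and $\det(xA-B)=(x^2-1)(x^2-2)$ is squarefree, so $B$ is stable. Since $B$ is positive definite, there is no rational $B$-isotropic line, so $B$ is not distinguished. Hence $B$ is generic.

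The missing idea is to use the shape in which the condition is meant, which is the shape for which the cited discriminant argument works. Up to the Weyl group and the symmetry $I\leftrightarrow J$, take $I=\{1,\ldots,k\}$ and $J=\{1,\ldots,2n+2-k\}$ with $1\le k\le n+1$; a strict inequality reduces to this by shrinking. Then $xA-B$ itself vanishes on $I\times J$. By symmetry, in the index blocks $\{1,\ldots,k\}$, $\{k+1,\ldots,2n+2-k\}$, $\{2n+3-k,\ldots,2n+2\}$ it has the form
\[
\begin{pmatrix}0&0&C\\0&D&E\\C^t&E^t&\ast\end{pmatrix}.
\]
So $\det(xA-B)=\pm\det C(x)^2\det D(x)$, where $\det C(x)=\pm x^k+\cdots$ is nonconstant. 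Hence $\Delta(f)=0$ and $B$ is not stable. Conceptually, $B(W_I,W_I^{\perp_A})=0$ says that $T=A^{-1}B$ preserves the $A$-isotropic subspace $W_I$, so $\mathrm{char}(T|_{W_I})^2$ divides $f$.
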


\begin{proof}
 In the first case, the discriminant of the characteristic polynomial $f(x)=\det(xA - B)$ is $0$ and hence $B$ is not stable (\cite[Lemma 27]{BG1}). In the second and third case, if $T=A^{-1}B$ is stable, then $T$ lies in the distinguished orbits. Here distinguished means that there is an $n$-dimensional subspace $W$ isotropic with respect to $A$ and $B$, and an $(n+1)$-dimensional subspace $W'$ containing $W$ isotropic with respect to $A$ such that $W\perp W'$ with respect to $B$ (\cite[Proposition 2.31]{W2}). We may take
 $$W=\Span\{e_1,\ldots,e_n\}$$
 and $W'$ to be $\Span\{e_1,\ldots,e_n,e_{n+1}\}$ in Case 2 and $\Span\{e_1,\ldots,e_n,e_{n+2}\}$ in Case 3.
\end{proof}

Let $U(n)_{I,J}$ denote the set of coordinates appearing in Case 1 of Proposition \ref{prop:reduciblecondition} for subsets $I,J$ of $\{1,\ldots,2n+2\}$ satisfying $|I|+|J|\geq2n+2$, let $U(n)_0$ denote the set of coordinates appearing in Case 2, and let $U(n)_1$ denote the set of coordinates appearing in Case 3. Let $U$ be a saturated subset of $U(n)$ that does not contain $U(n)_0$, $U(n)_1$ or any of the $U(n)_{I,J}$. We need to find $\pi_U\in\Q^{\geq0}[U(n)-U]$ with $\deg(\pi_U) < \#U$ such that
\begin{equation}\label{eq:Cond3}
 \pi_U + \delta(n) + \sum_{\chi\in U(n)-U}\chi\in\Q^{<0}[\Delta].
\end{equation}
When $U=\emptyset$, we can take $\pi_\emptyset = 0$ since $\delta(n)\in \Q^{<0}[\Delta]$ by \eqref{eq:delta}. Suppose from now on $U$ is not empty. For any $k=1,\ldots,n+1$, denote by $e_k(U)$ and $\delta(n)_k$ the coefficients of $\alpha_k$ in $\sum_{\chi\in U(n)-U}\chi$ and $\delta(n)$, respectively, when viewed inside $\Q[\Delta]$. For any such $k$, we define
\begin{eqnarray*}
 U^k &=& \{\chi_{b_{ij}}\in U:i=k\},\\
 U^{\geq k} &=& \{\chi_{b_{ij}}\in U: i\geq k\}.
\end{eqnarray*}
For any $k=1,\ldots,n-1,$ let $\pi_k$ be a minimal element of the set $U(n)^k-U^k$. We remark that there is a unique choice of $\pi_k$ unless $U^k=\{\chi_{b_{kk}},\ldots,\chi_{b_{kn}}\}$, in which case there are two choices and it doesn't matter which one is picked to be $\pi_k$. The following table gives the definitions of $\pi_n$ and $\pi_{n+1}$.

\begin{table}[ht]
  \centering
  \begin{tabular}{|c | c| c| }
    \hline
    $U^n$ & $\pi_n=-\alpha_n$ & $\pi_{n+1}=-\alpha_{n+1}$ \\
    \hline
    \hline
    $\subset\{\chi_{b_{nn}}\}$ & $\chi_{b_{n,n+2}}$ & $\chi_{b_{n,n+1}}$ \\
\hline
$\{\chi_{b_{nn}},\chi_{b_{n,n+1}}\}$ & $\chi_{b_{n,n+2}}$ & $\chi_{b_{n+1,n+1}}+\chi_{b_{n,n+2}}$\\
\hline
$\{\chi_{b_{nn}},\chi_{b_{n,n+2}}\}$ & $\chi_{b_{n,n+1}}+\chi_{b_{n+2,n+2}}$ & $\chi_{b_{n,n+1}}$ \\
\hline
  \end{tabular}
  \caption{Definitions of $\pi_n$ and $\pi_{n+1}$}
\label{table1}
\end{table}

If $U^n$ contains any more than the cases in the above table, then $U$ will contain $U(n)_{I,J}$ for some subsets $I,J$ of $\{1,\ldots,2n+2\}$ satisfying $|I|+|J|\geq2n+2$. These $\pi_k$'s are chosen so that they are in $\Q[U(n)-U]$ and such that $\pi_k\leq-\alpha_k$ for $k=1,\ldots,n+1$ with equality achieved when $k=n,n+1$. 
Set $$\pi_U = \sum_{k=1}^{n+1}(\epsilon + \max(0,e_k(U)+\delta(n)_k))\pi_k,$$
for some small positive constant $\epsilon$ to be chosen. Since each $\pi_k\leq-\alpha_k$, we see that \eqref{eq:Cond3} follows immediately. To check $\deg(\pi_U)<\#U,$ it suffices to show that
\begin{equation}\label{eq:Cond3deg}
 \sum_{k=1}^{n+1}\max(0,e_k(U)+\delta(n)_k)\deg(\pi_k) < \#U.
\end{equation}

We prove \eqref{eq:Cond3deg} by induction on $n$. Suppose first $n=1$. Then $U$ can only be $\{b_{11}\}$ and we have $$\delta(1) = -\alpha_1-\alpha_2,\qquad\mbox{and}\qquad\sum_{\chi\in U(1)-U}\chi = -\chi_{b_{11}} = \alpha_1+\alpha_2$$ so the statement is clear. Suppose $n>1$. The reduction from $n$ to $n-1$ is by removing the first and last rows and columns. The basis $\Delta$ of positive roots becomes $\{\alpha_2,\ldots,\alpha_n\}$. If $U^{\geq2}$ contains some $U(n-1)_{I,J}$, then $U$ contains $U(n)_{\{1\}\cup I, \{1\}\cup J}$. If $U^{\geq2}$ contains $U(n-1)_0$ or $U(n-1)_1$, then they are equal and $U=U(n)_0-\{\chi_{b_{1\,2n}}\}$ or $U = U(n)_1-\{\chi_{b_{1\,2n}}\}$ since $U$ is assumed to be saturated. In this case, \eqref{eq:Cond3deg} follows from a direct computation. Suppose now $U^{\geq2}$ does not contain $U(n-1)_{I,J}$ for every $I,J\subset\{2,\ldots,2n+2\}$ with $|I|+|J|\geq 2n$ or $U(n-1)_0$ or $U(n-1)_1$. By induction, we then have
\begin{equation}\label{eq:indhyp}
 \sum_{k=2}^{n+1}\max(0,e_k(U^{\geq2})+\delta(n-1)_k)\deg(\pi_k) < \#U^{\geq2}.
\end{equation}
From \eqref{eq:delta}, we have
$$\delta(n) = \delta(n-1) - 2n\sum_{j=1}^{n-1}\alpha_j - n(\alpha_n + \alpha_{n+1}).$$
There are three different possibilities for $U^1$:
\begin{description}
 \item[Case 1:] $U^1=\{\chi_{b_{11}},\ldots,\chi_{b_{1r}}\}$ with $r\leq n$. Direct computation shows $$e_k(U^1) + \delta(n)_k-\delta(n-1)_k< 0,$$ for any $k=1,\ldots,n+1$. Combining with \eqref{eq:indhyp} gives \eqref{eq:Cond3deg}.
 \item[Case 2:] $U^1=\{\chi_{b_{11}},\ldots,\chi_{b_{1n}},\chi_{b_{1l}}\}$ with $l=n+1$ or $n+2$. Direct computation shows $$\sum_{k=1}^{n+1}(e_k(U^1) + \delta(n)_k-\delta(n-1)_k))\pi_k = \sum_{k=1}^{n-1}(k-n+1)\pi_k + \pi_{2n+2-l}.$$
 Hence $$\sum_{k=1}^{n+1}\max(0,e_k(U)+\delta(n)_k)\deg(\pi_k)<\#U^{\geq2} + 2 \leq\#U.$$
 \item[Case 3:] $U^1=\{\chi_{b_{11}},\ldots,\chi_{b_{1r}}\}$ with $n+2\leq r\leq 2n$. Direct computation shows $$\sum_{k=1}^{n+1}(e_k(U^1) + \delta(n)_k-\delta(n-1)_k))\pi_k = \sum_{k=1}^{\max(n-1,2n+2-r)}(r+k-2n)\pi_k + \sum_{k=2n+3-r}^{n-1}2\pi_k + \pi_n + \pi_{n+1}.$$
Hence
\begin{eqnarray*}
 \sum_{k=1}^{n+1}\max(0,e_k(U)+\delta(n)_k)\deg(\pi_k)&<&\#U^{\geq2} + 1 + 2 + 2(r-n-3) + 1 + 2 \\
 [-.15in]&=& \#U^{\geq2} + 2r - 2n \\&\leq& \#U^{\geq2} + r \\&=& \#U.
\end{eqnarray*}
\end{description}
We have now completed the verification of Condition 2 of Theorem \ref{prop:maintransference}. As a result, AXIOM: Counting at Infinity II holds.

The various masses are exactly like the monic odd case with the only difference being that $\tau_{G,F}=4.$

\subsection{General hyperelliptic curves}

Fix a genus $n\geq 1$ and let $F$ be a global field of characteristic not $2$. We consider the family of general (even degree) hyperelliptic curves, as curves in weighted projective space $\P(1,1,n+1)$ and expressed by an equation of the form
\begin{equation}\label{hypereq1}
C_{c_0,\ldots,c_{2n+2}}=C_f: z^2 = f(x,y) = c_0x^{2n+2}+c_1x^{2n+1}y+\cdots+c_{2n+2}y^{2n+2},
\end{equation}
where $c_0,\ldots,c_{2n+2}\in F$. Two curves $C_{c_0,\ldots,c_{2n+2}}$ and $C_{c'_0,\ldots,c'_{2n+2}}$ are (defined to be) equivalent if and only if there is some constant $\alpha\in F$ such that $c_i=\alpha^{2}c'_i$ for all $i=0,\ldots,2n+2$. Hence it is natural to view $(c_0,\ldots,c_{2n+2})$ as an element of the weighted projective space $\P(2,2,\ldots,2).$ We define the height of $C_f$, and the height of $(c_0,\ldots,c_{2n+1})$, by the usual height on weighted projective space defined in Proposition \ref{prop:Sigma}. That is, let $I$ be the ideal
$$I = \{a\in F:a^{2}c_i\in \cO,\forall i=2,\ldots,2n+2\}.$$
Then,
\begin{equation}\label{eq:heightweightellHyper}
H(c_0,\ldots,c_{2n+2}) = (NI)\prod_{\fP\in M_\infty} \max(|c_0|_\fP^{1/2},\ldots,|c_{2n+2}|_\fP^{1/2}).
\end{equation}

Let $S=\A^{2n+3}$ be the space of $2n+3$-tuples $(c_0,\ldots,c_{2n+2})$ with an action of $\bG_m$ given by $\alpha.(c_0,\ldots,c_{2n+2})=(\alpha^2c_0,\ldots,\alpha^{2}c_{2n+2})$ for any $\alpha\in\bG_m$. For any positive real number $X$, let $S(F)_X$ denote the set of elements of $S(F)$ of height less than $X$ when $F$ is a number field, and equal to $X$ when $F$ is a function field. Let $\Sigma_0$ be the fundamental domain for the action of $\bG_m(F)$ on $S(F)$ constructed in Section \ref{sec:heightweighted}. Then $\Sigma_0$ is defined by congruence conditions and the intersection $\Sigma_0\cap S(F)_{X}$ is bounded. We shall view a monic degree $m$ hyperelliptic curve over $F$ as an element of $\Sigma_0$.

A \emph{large} family of hyperelliptic curves over $F$ is a subset $\Sigma_1\subset \Sigma_0$ defined by congruence conditions such that there exists subscheme $S_0$ of $S$ of codimension at least $2$ such that for all but finitely many non-archimedean places $\fP$, we have $\Sigma_0(k(\fP))\backslash \Sigma_1(k(\fP)) \subset S_0(k(\fP))$; where $\Sigma_0(k(\fP))$ and $\Sigma_1(k(\fP))$ denote the reduction modulo $\fP$ of the $\fP$-adic completion of $\Sigma_0$ and $\Sigma_1$ in $S(F_\fP)$ respectively. The set of locally soluble hyperelliptic curves over $F$ in $\Sigma_0$ is a large family (\cite[Lemma 15]{PS}). In this section we focus on large families of locally soluble hyperelliptic curves over $F$ and prove the following generalization of Theorem \ref{thm:GHsel2count}.

\begin{theorem}\label{thm:GHsel2countgen}
 Fix an integer $n\geq1$ and a global field $F$ of characteristic not $2$. Let $\Sigma_1$ be a large family of hyperelliptic curves over $F$ all of which are locally soluble. Then when hyperelliptic curves $C$ in $\Sigma_1$ are ordered by height, the average size of $\Sel_2(J^1)$ is bounded above by $2$.
\end{theorem}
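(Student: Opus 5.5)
We will prove Theorem \ref{thm:GHsel2countgen} by applying the upper-bound part of Theorem \ref{thm:countingmain} to the representation used in \cite{BGW}: $G=\SL_{2n+2}/\mu_2$ acting on $V=\Sym_2(2n+2)\otimes 2$, i.e.\ pairs $(A,B)$ of symmetric $(2n+2)\times(2n+2)$ matrices, acting by $g\cdot(A,B)=(gAg^t,gBg^t)$. The invariant map is $(A,B)\mapsto f(x,y)=(-1)^{n+1}\det(Ax-By)$, whose coefficients $c_0,\ldots,c_{2n+2}$ generate the invariant ring freely, each of degree $2n+2$. Then $\sum\deg = (2n+3)(2n+2)=\dim V$, which verifies AXIOM: \nicerep. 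The group $G$ is semisimple and has finite generic stabilizer $J_f[2]$-related. By \cite{BGW} (whose algebraic arguments work over any field of characteristic $\neq 2$), elements of $\Sel_2(J^1)$ for $C_f$ correspond injectively to locally soluble $G(F)$-orbits in $V_f(F)$, with stabilizers $J[2]$. The key point is that $\Sel_2(J^1)$ being nonempty forces the orbit to be locally soluble in the sense of the $(A,B)$ parametrization. We call an element generic if $f$ has nonzero discriminant; there are no distinguished orbits to remove, since $J^1$ may have no rational point.

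The axioms will be verified in the following order. First, for AXIOM: \localCondition, take $m_0$ to be the characteristic function of locally soluble elements whose invariants lie in $\kappa\cdot\Sigma_1$ for a suitable $\kappa\in\cO$. Condition 3 is the integrality statement of \cite{BGW} (the analogue of Proposition \ref{prop:oddhypersol}), whose local proof works verbatim over any non-archimedean $F_\fP$ of odd residue characteristic, after scaling by $\kappa$ to handle the primes above $2$. Condition 4 follows as in \cite{BGW}. Second, AXIOM: Local Spreading comes from the existence of sections over $F_\fP$ near soluble points, as in \cite{BGW}; with Remark \ref{rem:RI} and Proposition \ref{prop:Sigma} this gives AXIOM: Counting at Infinity I. Third, AXIOM: Uniformity Estimate for $S$ follows from largeness of $\Sigma_1$ by the geometric sieve, Theorem \ref{thm:Th7F}, since the complement is cut out mod $\fP$ by a codimension $\ge2$ subscheme $S_0$. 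We do not need the uniformity estimate for $V$, since only an upper bound is claimed.

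The main obstacle is AXIOM: Counting at Infinity II. We plan to verify it via Theorem \ref{prop:maintransference}, using the split torus $T$ of $\SL_{2n+2}/\mu_2$ (a split torus times the $\SL_2$-type part is absent here, since $G$ is just $\SL_{2n+2}/\mu_2$). The weights are $\chi_{a_{ij}},\chi_{b_{ij}}$, each a sum of two torus characters. We would extract from \cite[\S\S7--8]{BGW} the non-genericity criteria, namely that the existence of a common isotropic configuration forcing $\Delta(f)=0$ or forcing $C$ to have a rational point. Those conditions give the subsets $U_i$. We then check Condition 2 by an induction on $n$ analogous to the one carried out above for $\PSO_{2n+2}$, peeling off the first row and column. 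Condition 3 holds because $a_{11}$ has the most negative weight and dominates all simple roots. Condition 1 holds because the mod-$\fP$ density of elements with nonzero discriminant is bounded away from $0$.

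Finally, we compute the masses. For $f\in\Sigma_1$, the local mass equals $\#J(F_\fP)/2J(F_\fP)\cdot\#J[2](F_\fP)^{-1}$, times the indicator that $J^1$ lies in the image, i.e.\ at most the same quantity. The product formula makes $c_\infty\prod c_\fP=1$, and $\tau_{G,F}=2$ for $\SL_{2n+2}/\mu_2$. This yields an average at most $2$.
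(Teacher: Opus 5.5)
Your overall architecture is the paper's: the same $\SL_{2n+2}/\mu_2$ representation, the same integrality, spreading and geometric-sieve inputs, the upper bound only (so no uniformity for $V$), and masses that collapse via the product formula to $\tau_{G,F}=2$. The genuine problem is in your plan for AXIOM: Counting at Infinity II. There you let the sets $U_i$ include configurations ``forcing $C$ to have a rational point.''

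You and the paper both define generic $=$ stable. The hypothesis preceding Theorem~\ref{prop:maintransference} therefore admits only $U_i$ for which $U_i$-non-genericity forces $\Delta(f)=0$. Examples are a common isotropic $(n+1)$-dimensional subspace for $A$ and $B$, which makes $f$ a constant times a square, or a zero block making $\det(Ax-By)$ vanish or acquire a repeated factor.

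The configurations you describe are different. A common isotropic $n$-dimensional subspace (for $n=1$ just $a_{11}=b_{11}=0$), or a refinement of it giving a point on $C$, consists of \emph{stable} elements whose orbits are soluble over $F$. Their orbits are the images of $J^1(F)/2J(F)$, which are genuine elements of $\Sel_2(J^1)$. This leaves two options, and both fail:
\begin{itemize}
\item If you list these configurations among the $U_i$, you never bound the stable lattice points in those cusps, so the theorem's hypothesis fails.
\item If you reclassify them as non-generic, you discard $\#J(F)/2J(F)$ Selmer elements per curve whenever $J^1(F)\neq\emptyset$, with no control on their average. This differs from \cite{BG1} and \cite{SW}, where the distinguished orbits number one or two per curve and are added back by hand. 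The bound $2$ for $\#\Sel_2(J^1)$ would then not follow.
\end{itemize}
Your remark that nothing needs removing ``since $J^1$ may have no rational point'' reaches the right conclusion for the wrong reason: when $J^1(F)\neq\emptyset$ these orbits exist and must be counted.

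What is actually needed is that Condition~2 hold for every saturated $U$ not containing a discriminant-zero configuration, \emph{including} the solubility cusps. In other words, stable integral points there must be shown negligible. This is exactly what \cite[Proposition 12]{B} supplies, and it is what the paper cites; Condition~1 is cited to \cite[Proposition 13]{B}.

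As a check, take $n=1$ and $U=\{\chi_{a_{11}},\chi_{b_{11}}\}$. Then $\delta+\sum_{\chi\in U_0-U}\chi=-2\alpha_2-2\alpha_3$. Using $\chi_{a_{12}}=-\tfrac12\alpha_1-\alpha_2-\tfrac12\alpha_3$, the choice $\pi_U=\epsilon\chi_{a_{12}}$ satisfies Condition~2 with $\deg\pi_U=\epsilon<2=\#U$. Any induction of the kind you sketch must be set up with this corrected list of $U_i$. As written, it would verify Condition~2 on the wrong family of cusps.
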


\begin{remark}
   The requirement for large families here is stricter than the cases for monic hyperelliptic curves. This is because the results of \cite{BSWsqf2} have not been generalized to global fields, so we do not have the necessary squarefree sieves. With the current weaker notion of large families, AXIOM: Uniformity Estimate for $S$ follows from the geometric sieve (Theorem \ref{thm:geosieve}).
\end{remark}

The group $G=\SL_{2n+2}/\mu_2$ acts on the space $V=\Sym_2(2n+2)\oplus\Sym_2(2n+2)$ of pairs of symmetric bilinear forms via $g.(A,B)=(g^tAg,g^tBg).$ The ring of polynomial invariants is freely generated by the coefficients of its \emph{invariant binary form} $$f(x,y)=(-1)^{n+1}\det(Ax - By) = c_0x^{2n+2}+c_1x^{2n+1}y+\cdots+c_{2n+2}y^{2n+2}.$$
There are $2n+3$ of them each of degree $2n+2$. This verifies AXIOM: \nicerep. 

For any field $k$ of characteristic not $2$, an element $(A,B)\in V(k)$ is \emph{stable} if its invariant binary form $f(x,y)$ has no repeated factors. A stable element $(A,B)\in V(k)$ has stabilizer scheme isomorphic to $J_f[2]$ and gives a two-cover $F_{A,B}\to J_f^1$ of $J_f^1 = \Pic^1(C_f)$ (\cite[Theorem 2.7]{W2}). We say $(A,B)$ is $k$-\emph{soluble} if $F_{A,B}(k)\neq\emptyset$. The set of soluble orbits with invariant binary form $f(x,y)$ is in bijection with $J^1_f(k)/2J_f(k)$, which is acted on simply-transitively by the $2$-group $J_f(k)/2J_f(k)$. 

An element of $V(F)$ is \emph{generic} if and only if it is stable. A stable element $(A,B)\in V(F)$ is \emph{locally soluble} if $(A,B)$ is $F_\fP$-soluble for every place $\fP$ of $F$. If $f(x,y)$ is a binary $(2n+2)$-ic form whose associated hyperelliptic curve $C_f;z^2=f(x,y)$ is locally soluble, then there is a bijection between the set of locally soluble orbits with invariant binary form $f(x,y)$ and the $2$-Selmer set $\Sel_2(J^1)$ (\cite[Theorem 33]{BGW}).  

We have the following generalization of \cite[Theorem 32]{BGW} with the same proof.

\begin{proposition}\label{prop:evenhypersol}
Let $\kappa\in\cO$ be a fixed nonzero element so that every element in $1+\kappa^4\cO_\fP$ is a square for any a non-archimedean place $\fP$ of $F$. Fix any non-archimedean place $\fP$ of $F$. Suppose $(A,B)\in V(F_\fP)$ is $F_\fP$-soluble and its invariant lies in $\kappa^2.S(\cO_\fP)$. Then $T$ is $G(F_\fP)$-conjugate to an element in $V(\cO_\fP)$.
\end{proposition}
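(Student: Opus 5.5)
Since the statement is said to carry over from \cite[Theorem 32]{BGW} with the same proof, the plan is to follow that argument and check that each step uses only local input valid at any non-archimedean place $\fP$ of characteristic not $2$, with $\Z_p$ replaced by $\cO_\fP$.

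The first step is to interpret solubility geometrically. By \cite[Theorem 2.7]{W2} and the discussion preceding the proposition, $(A,B)$ being $F_\fP$-soluble means its orbit lies in the image of $J_f^1(F_\fP)/2J_f(F_\fP)$. Concretely, there is a divisor class of degree one, giving a $(2n+2)$-dimensional space with an isotropic flag adapted to the pencil $Ax-By$. Equivalently (as in \cite{BGW}), $(A,B)$ is $G(F_\fP)$-equivalent to a pair in which $A$ is split and a maximal isotropic subspace of dimension $n+1$ for $A$ is ``almost'' isotropic for $B$. The plan is therefore to reduce to the monic case in two substeps. First, use an $F_\fP$-point of $C_f$ (or a degree-one divisor class) to move $(A,B)$ by $G(F_\fP)$ into a normal form in which $A$ is the split form $A_0$, so that $T=A_0^{-1}B$ is a self-adjoint operator for a split quadratic form. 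This identifies the orbit with a soluble orbit in the monic even setting of \S\ref{sec:monicevenhyper}, after a change of variables in $\PGL_2(\cO_\fP)$ acting on $(x,y)$ that makes the leading coefficient a suitable unit-power multiple. Second, apply the integrality result in the monic case, namely the analogue of Proposition~\ref{prop:oddhypersol} from \cite[Proposition 12]{SW}.

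The factor $\kappa^2$ rather than $\kappa$ appears so that both normalizations stay inside the allowed range. After scaling so that $c_0$ is extracted, the invariants of $T$ lie in $\kappa.S(\cO_\fP)$, and the hypothesis that $1+\kappa^4\cO_\fP$ consists of squares is exactly what the monic result needs. At that point one invokes the local argument of \cite{SW,BG1}: construct a self-dual $\cO_\fP$-lattice $L\subset F_\fP^{2n+2}$ stable under $T$ (using the $\cO_\fP$-algebra $\cO_\fP[x]/f$ and the soluble class to produce an ideal whose square is principal with square-norm generator), and conjugate $L$ to the standard lattice by $G(F_\fP)$. This uses only Hensel's lemma and the structure of quadratic spaces over $\cO_\fP$, both valid for any $\fP$ with $2\in\cO_\fP^\times$ when $\kappa$ is a unit, and absorbed by $\kappa$ otherwise.

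I expect the main obstacle to be the reduction from the general binary form to the monic situation when $c_0$ is not a unit, or when $f$ has no $F_\fP$-rational root. There one must instead directly produce an integral model from the degree-one divisor class: choose the $\cO_\fP$-order $R_f$ attached to the binary form, which is not monogenic, together with a fractional ideal $I$ and $\delta$ with $I^2\subset\delta R_f$ and $N(I)^2=N(\delta)c_0^{-1}$ up to squares. Then check that $(A,B)$, given as the trace forms $\langle\alpha,\beta\rangle$ coefficient-extracted on $I$, is integral. This is precisely the construction in the proof of \cite[Theorem 32]{BGW}. I would reproduce it verbatim, replacing $p$-adic squares arguments by the $\kappa^4$-hypothesis, and verify that no step uses finiteness of $\Z_p/p$ beyond what holds for $k(\fP)$.
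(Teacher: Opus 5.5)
Your top-level plan matches the paper, whose entire proof is the remark that the argument of \cite[Theorem 32]{BGW} is local and survives replacing $\Z_p$ by $\cO_\fP$. The trouble is the mechanism you put in front of that: the reduction to the monic even case and \cite[Proposition 12]{SW}. It cannot work under the hypotheses of the proposition. Since $\kappa^2.S(\cO_\fP)$ means every $c_i\in\kappa^4\cO_\fP$, every value $f(x_0,y_0)$ with $(x_0,y_0)\in\cO_\fP^2$ also lies in $\kappa^4\cO_\fP$. So at each $\fP\mid\kappa$, which is exactly where the $\kappa$-hypothesis is used, no $\GL_2(\cO_\fP)$-change of variables makes $c_0$ a unit. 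The same obstruction occurs at odd $\fP$ whenever, e.g., $f\equiv 0\pmod{\fP}$. Moving a non-Weierstrass point $(x_0:y_0:z_0)$ to $(1:0)$ does make $c_0=z_0^2$ a square. That is what you need for $A$ to be split, since solubility alone only gives an $n$-dimensional common isotropic subspace. But then $z_0\notin\cO_\fP^\times$. To reach a monic model you have two options, and neither works:
\begin{itemize}
\item Divide $f$ by $c_0$. The invariants are then no longer integral, so \cite[Proposition 12]{SW} does not apply.
\item Pass to $(A,c_0B)$, i.e.\ to $f(x,c_0y)$, and twist by an element of $\GL_{2n+2}$ of determinant $z_0^{-1}$. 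An integral representative of the resulting monic orbit transports back only to a pair $(A',B')$ with $A'$ and $c_0B'$ integral. Clearing the factor $c_0^{-1}$ by an element of $G(F_\fP)$ is precisely the original problem.
\end{itemize}
So the monic route only covers the easy situation (odd $\fP$ with a soluble point having $z_0$ a unit). It also does not explain the $\kappa^2$ as you suggest.

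All the content therefore lies in your last paragraph, and there the decisive step is asserted rather than argued. You need two things. First, that every class in the image of $J^1_f(F_\fP)/2J_f(F_\fP)$ admits ideal data for the non-monic ring $R_f$ whose associated pair is integral. Second, where $c_i\in\kappa^4\cO_\fP$ and the hypothesis on $1+\kappa^4\cO_\fP$ enter. Calling this ``precisely the construction in \cite{BGW}'' is a citation, not a verification. If you mean simply to invoke \cite[Theorem 32]{BGW} after checking that it uses only local input valid over any $\cO_\fP$ with $2\neq 0$ in $F$, which is all the paper does, then drop the monic reduction, since it is not a valid step. If you mean to supply a proof, that existence statement is what must actually be established.
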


Let $\kappa_1$ be a nonzero element of $\cO$ such that $\kappa_1.\Sigma_1\subset S(\cO)$. Set $\Sigma=\kappa^2\kappa_1.\Sigma_1$. Let $V_{\Sigma}(F)$ be the set of locally soluble $T\in V(F)$ with invariants in $\Sigma$ and for any prime $\fP$, let $V_{\Sigma,\fP}(F)$ be the set of soluble stable  $T\in V(F_\fP)$ with invariants in $\Sigma_{\fP}$. Let $m_\Sigma$ be the characteristic function of $V_{\Sigma}(F)$ and for any prime $\fP$, let $m_{\Sigma,\fP}$ be the characteristic function of $V_{\Sigma,\fP}(F)$. Then $m_\Sigma=\prod_\fP m_{\Sigma,\fP}$ is a local product and each $m_{\Sigma,\fP}$ is $G(F_\fP)$-invariant. Then the weight function $m_\Sigma$ satisfies AXIOM: \localCondition, where Condition 4 was proved in the proof of \cite[Proposition 13]{B}.

We now check AXIOM: Local Spreading. For all the cases we have treated in the previous sections, the projection maps $V\rightarrow S$ all had algebraic sections defined over $\cO[1/N]$ for some $N\in\cO$. In this case, we no longer have such a section! Fix any place $\fP$ of $F$ and any $v=(A,B)\in V(F_\fP)$ with $m_\fP(v)\neq0.$ Then by the definitions of $F$ and $m_\fP$, $(A,B)$ is $F_\fP$-soluble and the associated hyperelliptic curve $C_f:z^2=f(x,y)$ is also $F_\fP$-soluble where $f(x,y)$ is the invariant binary form of $(A,B)$. Let $Q_f$ be a $F_\fP$-rational non-Weierstrass point of $C_f$. Then there exists a $\fP$-adically open neighborhood $R$ of $f$ in $S(F_\fP)$ such that $C_{f'}$ is $F_\fP$-soluble for any $f'\in R$ and the rational point $Q_f$ moves continuously (analytically) over $R$. That is, over $R$, we have a family of hyperelliptic curves with a marked rational non-Weierstrass point. Therefore, we can use the algebraic section in \S\ref{sec:monicevenhyper} to obtain the desired local section $R\rightarrow V(F_\fP)$.

When restricted to $F$, the height function defined in \eqref{eq:heightweightellHyper} is given by
$$H(c_0,\ldots,c_{2n+2}) = (N(\kappa^2\kappa_1))^{-1}\prod_{\fP\in M_\infty} \max(|c_0|_\fP^{1/2},\ldots,|c_{2n+2}|_\fP^{1/2}).$$
We extend this naturally to $S(F_\infty)$. Composing with the quotient map $V(F_\infty)\rightarrow S(F_\infty)$ gives a height function $H$ on $V(F_\infty)$ homogeneous of degree $n+1.$ Since the number of $F_\infty$-orbits is absolutely bounded, Remark \ref{rem:RI} gives the pre-compact fundamental domains $R_\lambda$. Hence AXIOM: Counting at Infinity I is satisfied.

AXIOM: Counting at Infinity II is satisfied because the conditions of Theorem \ref{prop:maintransference} are already proved in \cite{B}: Condition 1 is \cite[Proposition 13]{B}; Condition 2 is \cite[Proposition 12]{B}; Condition 3 follows as the the character associated to the $(1,1)$-coordinate of $A$ has the form $\sum_{\alpha\in\Delta} n_\alpha[\alpha]$ with all $n_\alpha<0$.

The various masses are exactly like \eqref{eq:mass}, where
$$c_\infty = \frac{\#J_f(F_\infty)/2J_f(F_\infty)}{\#J_f[2](F_\infty)},\quad c_\fP  = \frac{\#J_f(F_\fP)/2J_f(F_\fP)}{\#J_f[2](F_\fP)},$$
do not depend on $f\in \Sigma$. The product formula for abelian varieties gives $c_\infty\prod_{\fP\notin M_\infty}c_\fP  = 1.$ Therefore, by Theorem \ref{thm:countingmain}, the average number of (generic) locally soluble orbits with invariant in $\Sigma$ is bounded above by $\tau_{G,F}=2.$

\vspace{10pt}
\noindent\textbf{Proof of Theorem \ref{thm:generalhyperelliptic}}: Since the order of magnitude of the number of (generic) integral orbits is the same as the order of magnitude of the number of hyperelliptic curves, the statement on the rarity of rational points follows by the same argument as in \cite{B}. The statement on odd degree points follows from Theorem \ref{thm:GHsel2count} just as \cite[Theorem 3]{BGW} follows from \cite[Theorem 4]{BGW} using results of Dokchitser and Dokchitser (\cite[Appendix A]{BGW}).\hfill$\Box$

\subsection*{Acknowledgments}
It is a pleasure to thank Calvin Deng, Wei Ho, Marti Oller, Ari Shnidman, and Artane Siad for helpful conversations and comments on an earlier version of this article. MB was supported by a Simons Investigator Grant and NSF grant DMS-1001828. AS and XW were supported by NSERC discovery grants.


\begin{thebibliography}{10}


\bibitem{B}
M.\ Bhargava, Most hyperelliptic over $\Q$ curves have no rational points, \url{http://arxiv.org/abs/1308.0395}.

\bibitem{dodqf} M.\ Bhargava, The density of discriminants of quartic
  rings and fields, {\it Ann.\ of Math.\ $($2$)$} {\bf 162} (2005), no.\ 2, 1031--1063.

\bibitem{dodpf} M.\ Bhargava, The density of discriminants of quintic
  rings and fields, {\it Ann.\ of Math.\ $($2$)$} {\bf 172} (2010), no.\ 3,  1559--1591.

\bibitem{Bgeosieve}
M.\ Bhargava,
The geometric sieve and the density of squarefree values of invariant polynomials,
\url{http://arxiv.org/abs/1402.0031}.


\bibitem{BG1}
M.\ Bhargava and B.\ Gross,
The average size of the $2$-Selmer group of the Jacobians of hyperelliptic curves with
a rational Weierstrass point, {\it Automorphic Representations and $L$-functions}, {\it TIFR Studies
in Math.} {\bf 22} (2013), 23--91.

\bibitem{BGW} M.\ Bhargava, B.\ Gross. and X.\ Wang, A positive
  proportion of locally soluble hyperelliptic curves over $\Q$ have no
  point over any odd degree extension, {\it J. Amer. Math. Soc.} {\bf
    30} (2017), 451--493.


\bibitem{BH} M.\ Bhargava and W.\ Ho, Coregular spaces and genus one curves, {\it Camb.\ J.\ Math.} {\bf 4} (2016), no.\ 1, 1--119.

\bibitem{BH1} M.\ Bhargava and W.\ Ho, On average sizes of Selmer groups and ranks in families of elliptic curves having marked points, \url{https://arxiv.org/abs/2207.03309} 

\bibitem{BS2} M.\ Bhargava and A.\ Shankar, Binary quartic forms
  having bounded invariants, and the boundedness of the average rank
  of elliptic curves, {\it Ann.\ of Math.\ $($2$)$}, {\bf 181} (2015), no.\ 1,  191--242.


\bibitem{BS3}
M.\ Bhargava and A.\ Shankar, Ternary cubic forms having bounded invariants, and the existence of a positive proportion of elliptic curves having rank $0$, {\it Ann.\ of Math.\ $($2$)$} {\bf 181} (2015), no.\ 2,  587--621.

\bibitem{BS4}
M.\ Bhargava and A.\ Shankar, The average number of elements in the 4-Selmer groups of elliptic curves is 7, \url{http://arxiv.org/abs/1312.7333}.

\bibitem{BS5}
M.\ Bhargava and A.\ Shankar, The average number of elements in the 5-Selmer groups of elliptic curves is 6, and the average rank is less than 1, \url{http://arxiv.org/abs/1312.7859}.




\bibitem{BSWfield}
M.\ Bhargava, A.\ Shankar and X.\ Wang, Geometry-of-numbers methods over global fields I: Prehomogeneous vector spaces, \url{http://arxiv.org/abs/1512.03035}.

\bibitem{BSWsqf1}
M.\ Bhargava, A.\ Shankar and X.\ Wang, Squarefree values of polynomial discriminants I, {\it Invent.\ Math.} {\bf 228} (2022) no.\ 3, 1037--1073.

\bibitem{BSWsqf2}
M.\ Bhargava, A.\ Shankar and X.\ Wang, Squarefree values of polynomial discriminants II, {\it Forum Math.\ Pi} {\bf 13} (2025), e17.

\bibitem{rankone}
M.\ Bhargava and C.\ Skinner, A positive proportion of elliptic curves over $\Q$ have rank one, {\it J.\ Ramanujan Math.\ Soc.\ } {\bf 29} (2014), no.\ 2,  221--242. 

\bibitem{bsddensity}
M.\ Bhargava, C.\ Skinner, and W.\ Zhang, A majority of elliptic curves over $\Q$ satisfy the Birch and Swinnerton-Dyer Conjecture, \url{http://arxiv.org/abs/1407.1826}.

\bibitem{BV2}
M.\ Bhargava and I.\ Varma, On the mean number of 2-torsion elements in the class groups, narrow class groups, and ideal groups of cubic orders and fields, {Duke Math.\ J.} {\bf 10} (2015), 1911--1933.

\bibitem{BV}
M.\ Bhargava and I.\ Varma, The mean number of 3-torsion elements in the class groups and ideal groups of quadratic orders, {\it Proc.\ Lond.\ Math.\ Soc. $($3$)$} {\bf 112} (2016), no.\ 2,  235--266








\bibitem{Borel}
A.\ Borel, Some finiteness properties of adele groups over number fields, {\it Publ. Math. IHES} {\bf 16} (1983), 5--30.

\bibitem{BHC}
A.\ Borel and Harish-Chandra, Arithmetic subgroups of algebraic groups, {\it Ann.\ of Math.} {\bf 75} (1962), 485--535

\bibitem{BP}
A.\ Borel, G.\ Prasad, Finiteness thoerems for discrete subgroups of bounded covolume in semi-simple groups, {\it Publ. Math. IHES} {\bf 69} (1989), 119--171.

\bibitem{Conrad}
B.\ Conrad, Finiteness theorems for algebraic groups over function fields, {\it Compos. Math.} {\bf 148} (2012), no.\ 2, 555--639.

\bibitem{CFS}
J.\ E.\ Cremona, T.\ A.\ Fisher and M.\ Stoll, Minimalisation and reduction of 2-, 3-, and 4-coverings of elliptic curves, {\it Algebr.\ Number Theory} {\bf 4} No.\ 6 (2010), 763--820.

\bibitem{DH}
H.\ Davenport and H.\ Heilbronn, On the density of discriminats of cubic fields II, {\it Proc. Roy. Soc. London. Ser. A} \textbf{322} (1971), no.\ 1551, 405--420.

\bibitem{dJ}
A.\ J.\ de Jong, Counting elliptic surfaces over finite fields, {\it Mosc. Math. J.} {\bf 2} (2002), no.\ 2, 281-–311.

\bibitem{AWD}
A.\ W.\ Deng, Rational points on weighted projective spaces, \url{ http://arxiv.org/abs/math/9812082}.



\bibitem{Fisher}
T.\ Fisher, Explicit 5-descent on elliptic curves, {\it ANTS X--Proceedings of the Tenth Algorithmic Number Theorem Symposium}, 395--411.


\bibitem{Go}
D.\ Goldfeld, Conjectures on elliptic curves over quadratic fields, {\it Number Theory, Carbondale 1979 (Proc. Southern Illinois Conf.)}, 108--118. Lecture Notes in Math. {\bf 751}, Springer, Berlin, 1979.



\bibitem{Laga} J.\ Laga, Graded Lie algebras, compactified Jacobians and arithmetic statistics, {\it J.\ Eur.\ Math.\ Soc.} (2024).


\bibitem{LR} J.\ Laga, The average size of the 2-Selmer group of a family of non-hyperelliptic curves of genus 3, {\it Algebr.\ Number Theory} {\bf 16}, no. 5 (2022), 1161--1212.

\bibitem{Ngo}
Q.\ P.\ Ho, V.\ B.\ Le Hung and B.\ C.\ Ngo, Average size of 2-Selmer groups of elliptic curves over function fields, \url{http://arxiv.org/abs/1310.7963}.

\bibitem{KS}
N.\ M.\ Katz and P.\ Sarnak, Random matrices, Frobenius eigenvalues, and monodromy, {\it American Mathematical
Society Colloquium Publications} {\bf 45}, American Mathematical Society, Providence, RI, 1999.

\bibitem{Oller}
M.\ Oller, Geometry-of-numbers over number fields and the density of ADE families of curves having squarefree discriminant,
\url{https://arxiv.org/abs/2505.11301}.

\bibitem{PS2}
B.\ Poonen, M.\ Stoll, Most odd degree hyperelliptic curves have only one rational point, {\it Ann.\ of Math. $(2)$} {\bf 180} (2014), 1137--1166.

\bibitem{PS}
 B.\ Poonen and M.\ Stoll, The Cassels--Tate pairing on polarized
 abelian varieties, {\it Ann.\ of Math.} {\bf 150} (1999), 1109--1149.

\bibitem{RT} B.\ Romano, and J.\ Thorne, $E_8$ and the average size of the 3‐Selmer group of the Jacobian of a pointed genus‐2 curve, {\it Proc.\ of the London Math.\ Soc.\,} {\bf 122 (5)}, (2021), 678--723.

\bibitem{ANS}
A.\ N.\ Shankar, $2$-Selmer groups of hyperelliptic curves with marked points, {\it Trans. Amer. Math. Soc.} {\bf 372} (2019), 267--304.

\bibitem{SSW}
A.\ N.\ Shankar, A.\ Shankar, and X.\ Wang, Large families of elliptic curves ordered by conductor,
{\it Compos.\ Math.\ }{\bf 157} (2021), no. 7, 1538--1583.

\bibitem{ST}
A.\ Shankar, The average rank of elliptic curves over number fields, Ph.D thesis, Princeton 2012.


\bibitem{SW}
A.\ Shankar and X.\ Wang, Rational points on hyperelliptic curves having a
marked non-Weierstrass point, {\it Compos.\ Math.} {\bf 154} (2018), 188--222.

\bibitem{Siad1}
A.\ Siad, Effect of monogenicity on 2-torsion in the class group of number fields of odd degree,
\url{https://arxiv.org/abs/2011.08834}.

\bibitem{Siad2}
A.\ Siad, Monogenic fields with odd class number Part II: even degree,
\url{https://arxiv.org/abs/2011.08842}.

\bibitem{Swaminathan1}
A.\ A.\ Swaminathan, Most odd-degree binary forms fail to primitively represent a square, {\it Compos. Math.}, {\bf 160(3)} (2024), 481--517. 

\bibitem{Sp}
T.\ A.\ Springer. Reduction theory over global fields, {\it Proc.\ Indian Acad.\ Sci.\ (Math.\ Sci.\ )} {\bf 104} (1994), 207--216.


\bibitem{Th}
J.\ Thorne, $E_6$ and the arithmetic of a family of non-hyperelliptic curves of genus 3, {\it Forum Math.\ Pi}, Vol.\ 3 (2015), e1.

\bibitem{Th1}
J.\ Thorne, On the average number of 2-Selmer elements of elliptic curves over $\F_q(X)$ with two marked points.
{\it Doc.\ Math.} {\bf 24} (2019), pp. 1179--1223.

\bibitem{W2}
X.\ Wang, {\it Maximal linear spaces contained in the base loci
of pencils of quadrics}, {\it Algebr.\ Geom.} {\bf 5} (2018), no.\ 3, 359--397.

\bibitem{Weil} A.\ Weil, {\it Adeles and algebraic groups}, Birkh\"auser, 1982.

\end{thebibliography}
\end{document}